\DeclareMathOperator{\sign}{sign}
\DeclareMathOperator{\supp}{supp}
\newtheorem{thm}{Theorem}[section]
\newtheorem{definition}[thm]{Definition}
\newtheorem{lemma}[thm]{Lemma}
\newtheorem{prop}[thm]{Proposition}
\newtheorem{cor}[thm]{Corollary}
\newtheorem{rem}[thm]{Remark}
\let\epsilon\varepsilon
\begin{document}
\author{Mihály Kovács$^{1,2,3}$}
\thanks{M. Kov\'acs acknowledges the support of the Marsden Fund of the Royal Society of New Zealand through grant no. 18-UOO-143, the Swedish Research Council (VR) through project no. 2017-04274 and the NKFIH through grant no. 131545.}
\author{Mihály A. Vághy$^3$}
\thanks{M. A. Vághy acknowledges the support of the ÚNKP-21-3-I-PPKE-60 National Excellence Program of the Ministry for Innovation and Technology from the source of the National Research, Development and Innovation Fund.}

\address{$^1$Department of Mathematical Sciences, Chalmers University of Technology and University of Gothenburg, SE-41296 Gothenburg, Sweden}
\address{$^2$Department of Differential Equations, Budapest University of Technology and Economids, Műegyetem rkp. 3-9, H-1111 Budapest, Hungary}
\address{$^3$Faculty of Information Technology and Bionics, Pázmány Péter Catholic University, Práter u. 50/a, H-1444 Budapest, Hungary}

\title{Nonlinear semigroups for nonlocal conservation laws}

\subjclass{35F25; 35Q49; 45K05}

\begin{abstract}
	We investigate a class of nonlocal conservation laws in several space dimensions, where the continuum average of weighted nonlocal interactions are considered over a finite horizon. We establish well-posedness for a broad class of flux functions and initial data via semigroup theory in Banach spaces and, in particular, via the celebrated Crandall-Liggett Theorem. We also show that the unique mild solution satisfies a Kru\v{z}kov-type nonlocal entropy inequality. Similarly to the local case, we demonstrate an efficient way of proving various desirable qualitative properties of the unique solution.
\end{abstract}

\maketitle

\section{Introduction}
We study the semigroup theory of nonlocal conservation laws of the form
\begin{equation}\label{eq:CP}
	\begin{aligned}
		&\pdv{u}{t}+\int_{\mathbb{R}^n}\sum_{i=1}^k\frac{\phi_i(u,\tau_{\beta_i(h)}u)-\phi_i(\tau_{-\beta_i(h)}u,u)}{\norm{\beta_i(h)}_{\mathbb{R}^n}}\omega_i\qty\big(\beta_i(h))\dd{h}=0,\qquad&&(x,t)\in\mathbb{R}^n\times\mathbb{R}_+;\\
		&u(x,0)=u_0(x),&&x\in\mathbb{R},
	\end{aligned}
\end{equation}
where $\tau_{\pm h}u(x,t)=u(x\pm h,t)$ denote a spatial shift of the conserved quantity $u(x,t)$ and the flux functions $\phi_i:\mathbb{R}\times\mathbb{R}\mapsto\mathbb{R}$ are assumed to be increasing with respect to their first arguments and decreasing with respect to their second arguments, and to have the property $\phi_i(0,0)=0$. The number $1\le k\le n$ denotes the number of subinteractions and the functions $\beta_i:\mathbb{R}^n\mapsto\mathbb{R}^n$ are assumed to be of the form
\begin{equation}
	\beta_i(h)=\sum_{j\in B_i}h_je_j,\qquad h=(h_1,h_2,\dots,h_n),
\end{equation}
where the nonempty, pairwise disjoint sets $B_i\subset\qty{1,2,\dots,n}$ are such that $\bigcup_{i=1}^kB_i=\qty{1,2,\dots,n}$ and $e_j$ denotes the $j$th unit vector in $\mathbb{R}^n$.
The kernel functions $\omega_i\in\mathcal{L}^1(\mathbb{R}^n)\cap\mathcal{L}^{\infty}(\mathbb{R}^n)$ are assumed to be nonnegative with $\norm{\omega_i\qty\big(\beta_i(.))}_{\mathcal{L}^1(\mathbb{R}^n)}=1$. We further assume that the support of the kernel functions are finite and are either
\begin{enumerate}
	\item symmetric around the origin, in which case we further assume that the kernels are even, or
	\item contained in $\mathbb{R}_+^n$ such that the closure contains the origin.
\end{enumerate}
For example, in the context of nonlocal particle flows, the above cases allows us to differentiate between multidirectional and unidirectional flows.

Our main examples for the choice of $k$, $\beta_i$ and $\omega_i$ are as follows.
\begin{enumerate}
	\item If $k=1$ and $\beta_1(h)=h$, then the conservation law \eqref{eq:CP} takes the form
		\begin{equation}\label{eq:example:int}
			\pdv{u}{t}+\int_{\mathbb{R}^n}\frac{\phi_1(u,\tau_hu)-\phi_1(\tau_{-h}u,u)}{\norm{h}_{\mathbb{R}^n}}\omega(h)\dd{h}=0.
		\end{equation}
		This case describes a natural multidirectional generalization of the one-dimensional unidirectional nonlocal pair-interaction model investigated in \cite{Du2017}. In fact, if $n=1$ and $\supp(\omega)\subset\mathbb{R}_+$, the law \eqref{eq:example:int} coincides with the latter.
	\item If $k=n$ and $\beta_i(h)=h_ie_i$ and $\omega_i(h)=\prod_{j=1}^n\tilde{\omega}_j(h_j)$, where the kernel functions $\tilde\omega_j$ have analogous properties to that of $\omega_i$ in $\mathbb{R}$ with $\supp(\tilde{\omega}_j)=(-\delta_j,\delta_j)$ for $\delta_j>0$, then the conservation law \eqref{eq:CP} takes the form
		\begin{equation}
			\pdv{u}{t}+\sum_{i=1}^n\int_{-\delta_i}^{\delta_i}\frac{\phi_i(u,\tau_{h_ie_i}u)-\phi_i(\tau_{-h_ie_i}u,u)}{|h_i|}\tilde{\omega}_i(h_i)\dd{h_i}=0.
		\end{equation}
		Should the underlying model allow such considerations, this case corresponds to interactions that can be unfolded into subinteractions along the individual axes. A clear advantage of this example is the ease of numerical approximation of the integral as described in \cite[Section 3.1]{Du2017}. If $n=1$ and $\supp(\tilde\omega_1)=(0,\delta_1)$ instead, then again, we obtain the one-dimensional unidirectional nonlocal pair-interaction model of \cite{Du2017}, as in the previous special case.
\end{enumerate}
We say that the nonlocal flux functions $\phi_i$ are consistent with the local fluxes $\psi_i$ if $\phi_i(a,a)=\psi_i(a)$ holds for {\color{black}all} $a\in\mathbb{R}$. For consistent flux functions, if in addition, the weighting kernels are smooth with their support approaching zero, both special cases {\color{black}formally} lead to the standard local conservation law
\begin{equation}\label{eq:local}
	\pdv{u}{t}+\sum_{i=1}^n\pdv{\psi_i(u)}{x_i}=0.
\end{equation}

For the formal derivation of \eqref{eq:CP} we utilize the nonlocal vector calculus established in \cite{Du2013,Gunzburger2010}. Let $\boldsymbol{\nu},\boldsymbol{\tilde\nu},\boldsymbol{\alpha}:\mathbb{R}^n\times\mathbb{R}^n\mapsto\mathbb{R}^k$ be vector two-point functions defined by the coordinate functions
\begin{align}
	&\boldsymbol{\nu}_i{\color{black}(u)}(x,y,t)=\phi_i\qty\Big(u(x,t),u\qty\big(x+\beta_i(y-x),t)),\\
	&\boldsymbol{\tilde\nu}_i{\color{black}(u)}(x,y,t)=\phi_i\qty\Big(u\qty\big(x+\beta_i(y-x),t),u(x,t)),\\
	&\boldsymbol{\alpha}_i(x,y)=\frac{\omega_i\qty\big(\beta_i(y-x))}{\norm{\beta_i(y-x)}_{\mathbb{R}^n}}.
\end{align}
Then, the nonlocal point divergence is defined as
\begin{equation}
	\mathcal{D}\qty\big(\boldsymbol{\nu}{\color{black}(u)},\boldsymbol{\tilde\nu}{\color{black}(u)})(x,t)=\int_{\mathbb{R}^n}\qty\big(\boldsymbol{\nu}{\color{black}(u)}(x,y,t)-\boldsymbol{\tilde\nu}{\color{black}(u)}(x,y,t))\cdot\boldsymbol{\alpha}(x,y)\dd{y}
\end{equation}
and repeated changes of variables in the integral gives
\begin{equation}\label{eq:divergence}
	\begin{aligned}
		\mathcal{D}\qty\big(\boldsymbol{\nu}{\color{black}(u)},\boldsymbol{\tilde\nu}{\color{black}(u)})(x,t)&=\int_{\mathbb{R}^n}\sum_{i=1}^k\frac{\phi_i(u,\tau_{\beta_i(h)}u)-\phi_i(\tau_{\beta_i(h)}u,u)}{\norm{\beta_i(h)}_{\mathbb{R}^n}}\omega_i\qty\big(\beta_i(h))\dd{h}\\
		&=\int_{\mathbb{R}^n}\sum_{i=1}^k\frac{\phi_i(u,\tau_{\beta_i(h)}u)-\phi_i(\tau_{-\beta_i(h)}u,u)}{\norm{\beta_i(h)}_{\mathbb{R}^n}}\omega_i\qty\big(\beta_i(h))\dd{h}.
	\end{aligned}
\end{equation}
The theory of abstract balance laws thoroughly discussed in \cite[Section 7]{Du2013} shows that in the absence of external sources a class of nonlocal balance laws are given by
\begin{equation}
	\pdv{u}{t}\mbox{}(x,t)+\mathcal{D}\qty\big(\boldsymbol{\nu}{\color{black}(u)},\boldsymbol{\tilde\nu}{\color{black}(u)})(x,t)=0,
\end{equation}
which, combined with \eqref{eq:divergence}, gives exactly the law \eqref{eq:CP}.

Local conservation and balance laws have been widely used in aerodynamics and Eulerian gas dynamics \cite{LeVeque1991}, pedestrian flows \cite{Schreckenberg2002}, ribosome flows \cite{Raveh2015} and many other fields \cite{Smoller1994} for the past decades. {\color{black}In recent years nonlocality has been introduced in multiple forms. A particular method is considering a nonlocal velocity, often expressed as a spatial convolution. The resulting family of models found many applications, for example for modelling supply chains \cite{Keimer2017,Keimer2018,Shang2011} and traffic flows \cite{Chiarello2018,Goatin2016}. However, these models often lack monotonicity of solutions and sometimes they violate the maximum principle as well. Another approach to spatial nonlocality is considering pointwise interactions weighted by an appropriate integral kernel \cite{Du2012}, which has many applications in the field of peridynamics \cite{Bobaru2016,Katiyar2014,Lyngaas2018}. However, the nonlocal model of \cite{Du2012} failed to preserve monotonicity, hence the authors formalized the nonlocal pair-interaction model in \cite{Du2017}, where they show that it satisfies the desired properties. A final advantage of this model is that it reduces to its local counterpart \eqref{eq:local} as the nonlocal horizon vanishes \cite{Du2017num}, while some other nonlocal models do not have this property \cite{Colombo2019}. Because of these improvements the nonlocal pair-interaction model, similarly to its earlier version, has seen many applications in the field of peridynamics \cite{Alimov2021,Zhao2022}, while similar integral terms can be seen in nonlocal formulations of other problems as well, for example of that of the Allen-Cahn equation \cite{Yao2022}.}

It is well known that the solution of \eqref{eq:CP} (including the local case \eqref{eq:local} as well) may develop spatial discontinuities (shock waves) over time, even if the initial data is smooth. Hence the Cauchy problem must be considered in a weak or generalized sense. However, there might be infinitely many weak solutions of \eqref{eq:CP} for given initial data. This fact lead to the development of additional constraints, such as the entropy condition, selecting the unique, physically relevant weak solution, which in this case is the so-called entropy solution.

The well-posedness of the local conservation law \eqref{eq:local} is a thoroughly investigated problem, heavily influenced by the profound work of Kru\v{z}kov \cite{Kruzkov1970}. Kru\v{z}kov showed uniqueness via a priori estimates and existence using the vanishing viscosity method for bounded and measurable initial data and sufficiently smooth flux functions, thus achieving well-posedness. Existence of entropy solutions can often be proved by the convergence of an appropriate numerical scheme \cite{Crandall1980, Sanders1983} (the technique was first used to prove the existence of weak solutions \cite{Conway1966,Douglis1963}). Another classical framework is nonlinear semigroup theory and, in particular, the celebrated Crandall-Liggett Theorem \cite{Crandall1971}, which was first used to prove well-posedness by Crandall \cite{Crandall1972}. Many combinations of these approaches were developed, a notable example being the approximation of semigroups of contractions \cite{Miyadera1992}.

The well-posedness of the one-dimensional nonlocal Cauchy problem with $\beta_1(h)=h$ was investigated in \cite{Du2017}, where Kru\v{z}kov's method was applied to prove uniqueness and existence was proved by the convergence of an appropriate finite volume scheme. While this approach could be extended for multidimensional non-homogeneous Cauchy problems in some special cases (see our second example above), {\color{black}the method is difficult to apply in the generality of \eqref{eq:CP} if $k<n$. Instead, we will also work with the semigroup framework, which provides an elegant way of handling further problems like inhomogeneous conservation laws \cite{Bothe2003} or error control of finite volume methods \cite{Nochetto2006}. Another particular advantage of semigroup theory is the ability to handle $\mathcal{L}^1(\mathbb{R}^n)$ initial data, while with the methods of \cite{Du2017} one can only show existence and uniqueness for $\mathcal{L}^1(\mathbb{R}^n)\cap\mathcal{L}^{\infty}(\mathbb{R}^n)$ initial data.} The semigroup framework considers generalized solutions of abstract Cauchy problems, often called mild solutions. In general, a mild solution can coincide with a weak solution or an entropy solution or, in some cases, with neither; after proving well-posedness an additional investigation is necessary to determine this.

The main results of the paper are contained in Theorems \ref{thm:A_generates} and \ref{thm:A_go_brr} and Corollary \ref{cor:ICP}. In Theorem \ref{thm:A_generates}, we give appropriate circumstances under which there exists an operator satisfying the assumptions of the Crandall-Liggett Theorem. In Theorem \ref{thm:A_go_brr}, we show that the unique mild solution of \eqref{eq:CP} satisfies a nonlocal Kru\v{z}kov-type entropy inequality and has many other qualitative properties that are desirable from a physical point of view. In Corollary \ref{cor:ICP} we extend the well-posedness to conservation laws under Carathéodory forcing.

The outline of the paper is as follows. In Section \ref{sec:preli}, we introduce notations and the abstract framework. In Section \ref{sec:semi}, we give the necessary definitions and state our main results. Section \ref{sec:proof} contains the proof of the main results. The main steps of the proofs are based on \cite{Crandall1972}, however, there are significant nontrivial differences in the details. The difficulty in carrying out this construction is the absence of flux derivatives rendering the method of integration by parts and thus many simplifying steps inapplicable. Most of these complications can be solved by a formally similar technique obtained via changes of variables in the integrals; the technique is often called integration by parts for difference quotients, see, for example \cite[page 295]{Evans2010}. However, a significant step that cannot be resolved in such manner is the verification of the range condition. Crandall uses a perturbation results to establish this, namely \cite[Theorem 3.2]{Kobayashi1977}, but this approach does not seem to be applicable in the nonlocal setting. Instead, we use a fix-point based approach similar to that of \cite[Chapter 4]{Lions1982} and \cite[Proposition IV.3]{Crandall1983}. Throughout the paper the arguments of the functions $\beta_i$ and $\omega_i$ are omitted unless necessary and $C$ is used as a generic constant that may take on different values at different occurrences.

\section{Preliminaries}\label{sec:preli}
We give a brief introduction of the abstract setting based on \cite{Bothe1999,Crandall1972,Crandall1980rep}.

\subsection{Mild solutions of the abstract Cauchy problem}
Let $X$ be a real Banach space and $A$ be a possibly multivalued operator in $X$ and $J=[0,T]\subset\mathbb{R}$ and $f\in\mathcal{L}^1(J,X)$. Consider the quasi-autonomous Cauchy problem
\begin{equation}
	\begin{aligned}\label{eq:ACP}
		&u'+Au\ni f(t),\qquad& t\in J;\\
		&u(0)=u_0
	\end{aligned}
\end{equation}
for $u_0\in\overline{D(A)}$. We call $u\in\mathcal{C}(J,X)$ a mild solution of \eqref{eq:ACP} if for every $\epsilon>0$ there exists a partition $0=t_0\le t_1\le t_2\le\dots\le t_N$ of $[0,t_N]$ and sequences $\qty{z_1,z_2,\dots,z_N},\qty{f_1,f_2,\dots,f_N}$ in $X$ such that
\begin{equation}
	\begin{aligned}
		&t_i-t_{i-1}<\epsilon,\qquad&i=1,\dots,N\\
		&T-\epsilon<t_N\le T,\\
		&\sum_{i=1}^N\int_{t_{i-1}}^{t_i}\norm{f(s)-f_i}\dd{s}<\epsilon,\\
		&\frac{z_i-z_{i-1}}{t_i-t_{i-1}}+Az_i\ni f_i,\qquad&i=1,\dots,N
	\end{aligned}
\end{equation}
and $\norm{z(t)-u(t)}\le\epsilon$ on $[0,t_N]$, where $z:[0,t_N]\mapsto X$ is defined by
\begin{equation}
	z(t)=z_i\qquad\text{for }t_{i-1}\le t<t_i,~i=1,2,\dots,N.
\end{equation}
The piecewise constant function $z$ is called an $\epsilon$-approximate solution of \eqref{eq:ACP}.

Let $F:J\times\overline{D(A)}\mapsto2^X\backslash\emptyset$. A mild solution of the Cauchy problem
\begin{equation}
	\begin{aligned}
		&u'\in -Au+F(t,u),\qquad t\in J;\\
		&u(0)=u_0
	\end{aligned}
\end{equation}
is a function that is a mild solution of the quasi-autonomous problem
\begin{equation}
	\begin{aligned}
		&u'+Au\ni f(t),\qquad t\in J;\\
		&u(0)=u_0
	\end{aligned}
\end{equation}
with some $f\in\mathcal{L}^1(J,X)$ such that $f(t)\in F\qty\big(t,u(t))$ a.e.

\subsection{Crandall-Liggett Theorem}
Let $X$ be a Banach space and $A$ be a possibly multivalued operator in $X$. The operator $A$ is called accretive if, for any $\lambda>0$ and $x,y\in D(A)$, the inequality
\begin{equation}
	\norm{(x+\lambda u)-(y+\lambda v)}\ge\norm{x-y}
\end{equation}
holds, where $u\in Ax$ and $v\in Ay$. The operator $A$ is called $m$-accretive if it is accretive and the operator $I+\lambda A$ is surjective for $\lambda>0$; that is, we have
\begin{equation}
	R(I+\lambda A)=\bigcup_{x\in D(A)}\bigcup_{v\in Ax}\qty{x+\lambda v}=X.\label{eq:range}
\end{equation}

\begin{thm}[Crandall-Liggett Theorem]
	Let $X$ be a Banach space and $A$ be a possibly multivalued $m$-accretive operator in $X$. Then for $\epsilon>0$ and $u_0\in X$ the problem
	\begin{equation}\label{eq:genCP}
		\begin{aligned}
			&\frac{1}{\epsilon}\qty\big(u_{\epsilon}(t)-u_{\epsilon}(t-\epsilon))+Au_{\epsilon}(t)\ni0,\qquad&&t\ge0;\\
			&u_{\epsilon}(0)=u_0,\qquad&&t<0
		\end{aligned}
	\end{equation}
	has a unique solution $u_{\epsilon}(t)$ on $[0,\infty)$. If $u_0\in\overline{D(A)}$, then $\lim_{\epsilon\rightarrow0}u_{\epsilon}(t)$ converges uniformly to the unique mild solution of \eqref{eq:ACP} in bounded sets and $\qty\big(S(t))_{t\ge0}$ defined by $S(t)u_0=\lim_{\epsilon\rightarrow0}u_{\epsilon}(t)$ is a semigroup of contractions on $\overline{D(A)}$; that is, we have 
	\begin{enumerate}[label=\normalfont(\roman*)]
		\item $S(t):\overline{D(A)}\mapsto\overline{D(A)}$ for $t\ge0$,
		\item $S(t)S(\tau)=S(t+\tau)$ for $t,\tau\ge0$,
		\item $\norm{S(t)v-S(t)w}\le\norm{v-w}$ for $t\ge0$ and $v,w\in D(A)$,
		\item $S(0)=I$,
		\item $S(t)v$ is continuous in the pair $(t,v)$.
	\end{enumerate}
\end{thm}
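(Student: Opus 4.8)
The plan is to run the classical Crandall--Liggett argument, which rests entirely on the resolvent $J_\lambda:=(I+\lambda A)^{-1}$. First I would record that $m$-accretivity makes $J_\lambda$ single-valued and $1$-Lipschitz on all of $X$, with range in $D(A)$: if $x+\lambda v=y+\lambda w$ with $v\in Ax$, $w\in Ay$, then accretivity forces $\norm{x-y}=0$, so $J_\lambda$ is a well-defined map; the same inequality gives $\norm{J_\lambda x-J_\lambda y}\le\norm{x-y}$; and surjectivity of $I+\lambda A$ from \eqref{eq:range} gives $D(J_\lambda)=X$. With this, the scheme \eqref{eq:genCP} reads $u_\epsilon(n\epsilon)=J_\epsilon u_\epsilon((n-1)\epsilon)$, hence $u_\epsilon(n\epsilon)=J_\epsilon^n u_0$; the piecewise constant function $u_\epsilon$ thus exists and is unique because $J_\epsilon$ is single-valued.

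The heart of the proof is the convergence of $J_{t/n}^n u_0$ as $n\to\infty$. For $x\in D(A)$ put $|Ax|:=\inf\{\norm{v}:v\in Ax\}$. A short computation with accretivity gives $\norm{J_\lambda x-x}\le\lambda|Ax|$, and then the contraction property yields the a priori bound $\norm{J_\lambda^n x-x}\le n\lambda|Ax|$. The key step is the fundamental difference inequality: starting from the resolvent identity $J_\lambda x=J_\mu\!\big(\tfrac{\mu}{\lambda}x+(1-\tfrac{\mu}{\lambda})J_\lambda x\big)$, valid for $0<\mu\le\lambda$, one gets for $a_{m,n}:=\norm{J_\lambda^m x-J_\mu^n x}$ the recursion $a_{m,n}\le\tfrac{\mu}{\lambda}a_{m-1,n-1}+(1-\tfrac{\mu}{\lambda})a_{m,n-1}$, which together with the boundary data $a_{m,0}\le m\lambda|Ax|$ and $a_{0,n}\le n\mu|Ax|$ is solved by a binomial/generating-function double induction to yield a bound of the form
\begin{equation}
	\norm{J_\lambda^m x-J_\mu^n x}\le\Big((m\lambda-n\mu)^2+m\lambda^2+n\mu^2\Big)^{1/2}|Ax|.
\end{equation}
Taking $\lambda=t/m$, $\mu=t/n$ shows $(J_{t/n}^n x)_n$ is Cauchy, so it converges; call the limit $S(t)x$. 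Since $\norm{J_{t/n}^n x-J_{t/n}^n y}\le\norm{x-y}$, the map $S(t)$ is $1$-Lipschitz on $D(A)$ and extends uniquely to a contraction on $\overline{D(A)}$, with convergence uniform for $t$ in bounded intervals and for $x$ in bounded subsets of $\overline{D(A)}$.

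It then remains to verify (i)--(v) and to identify $S(\cdot)u_0$ with the mild solution. Property (i) holds because $J_\lambda^n x\in D(A)$ for all $n$; (iv) is immediate; (iii) is the contraction bound in the limit. For the Lipschitz-in-$t$ part of (v) one lets $\mu\to0$ with $n\mu\to t$ in the fundamental estimate to obtain $\norm{S(t)x-S(s)x}\le|t-s|\,|Ax|$ for $x\in D(A)$, extends it by density and contraction, and combines it with (iii) to get joint continuity in $(t,v)$. The semigroup law (ii) follows by comparing the uniform partition of $[0,t+\tau]$ with the concatenation of uniform partitions of $[0,\tau]$ and $[\tau,t+\tau]$: the products $J_\lambda^{[(t+\tau)/\lambda]}x$ and $J_\lambda^{[t/\lambda]}J_\lambda^{[\tau/\lambda]}x$ differ by at most one factor $J_\lambda$, and $\norm{J_\lambda y-y}\to0$ as $\lambda\to0$ for every $y\in\overline{D(A)}$ (first for $y\in D(A)$ via $\norm{J_\lambda y-y}\le\lambda|Ay|$, then by density). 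Finally, the functions $u_\epsilon$ are by construction $\epsilon$-approximate solutions of \eqref{eq:ACP} with $f\equiv0$ (and $f_i\equiv0$), and they converge uniformly on bounded time intervals to $S(\cdot)u_0$; hence $S(\cdot)u_0$ is a mild solution, and uniqueness of mild solutions for accretive $A$ is the standard fact obtained by comparing two families of $\epsilon$-approximate solutions and passing to the limit in the resulting discrete inequality.

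The step I expect to be the main obstacle is the fundamental difference inequality together with the double induction that solves the recursion for $a_{m,n}$: this is the one genuinely delicate computation, and it is precisely where the interplay of the two step sizes is controlled. Everything else --- the resolvent algebra, the a priori bounds, and the density and continuity arguments --- is soft.
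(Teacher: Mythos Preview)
The paper does not prove this theorem: it is stated in Section~\ref{sec:preli} as a classical result and cited to \cite{Crandall1971}, so there is no ``paper's own proof'' to compare against. Your proposal is a faithful outline of the original Crandall--Liggett argument --- resolvent contractivity, the resolvent identity $J_\lambda x=J_\mu\big(\tfrac{\mu}{\lambda}x+(1-\tfrac{\mu}{\lambda})J_\lambda x\big)$, the fundamental recursive estimate for $a_{m,n}=\norm{J_\lambda^m x-J_\mu^n x}$, and the extraction of the semigroup properties --- and would reproduce the theorem correctly.
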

\section{Statement of new results}\label{sec:semi}
{\color{black} The abstract framework of operator semigroups and, in particular, the fundamental Crandall-Liggett Theorem utilizes the notion of mild solutions. Later we will show that the unique mild solution of the conservation law \eqref{eq:CP} also satisfies a Kru\v{z}kov-type entropy inequality. For the exact formulation of this inequality let us define the function $\eta:\mathbb{R}^n\mapsto\mathbb{R}$ to be an entropy of \eqref{eq:CP} with entropy fluxes $q_i:\mathbb{R}^n\times\mathbb{R}^n\mapsto\mathbb{R}$ given that it is continuously differentiable and the equality
\begin{equation}\label{eq:eta_q}
	\eta'(u)\int_{\mathbb{R}^n}\frac{\phi_i(u,\tau_{\beta_i}u)-\phi_i(\tau_{-\beta_i}u,u)}{\norm{\beta_i}_{\mathbb{R}^n}}\omega_i\dd{h}=\int_{\mathbb{R}^n}\frac{q_i(u,\tau_{\beta_i}u)-q_i(\tau_{-\beta_i}u,u)}{\norm{\beta_i}_{\mathbb{R}^n}}\omega_i\dd{h}
\end{equation}
holds for all $i=1,2,\dots,k$. Then if $u(t,x)$ is a $\mathcal{C}^1$ solution of \eqref{eq:CP} then it also satisfies
\begin{equation}
	\pdv{\eta(u)}{t}+\int_{\mathbb{R}^n}\sum_{i=1}^k\frac{q_i(u,\tau_{\beta_i}u)-q_i(\tau_{-\beta_i}u,u)}{\norm{\beta_i}_{\mathbb{R}^n}}\omega_i\dd{h}=0.
\end{equation}
In the case of an $\eta\in\mathcal{C}^2$ convex entropy standard vanishing viscosity arguments (using integration by parts for difference quotients) show that the inequality
\begin{equation}\label{eq:almost_entropy}
	\int_0^T\int_{\mathbb{R}^n}\eta(u)\pdv{f}{t}\dd{x}\dd{t}+\int_0^T\int_{\mathbb{R}^n}\int_{\mathbb{R}^n}\sum_{i=1}^k\frac{\tau_{\beta_i}f-f}{\norm{\beta_i}_{\mathbb{R}^n}}q_i(u,\tau_{\beta_i})\omega_i\dd{h}\dd{x}\dd{t}\ge0
\end{equation}
holds for any $T>0$, nonnegative $f\in\mathcal{C}_0^{\infty}\qty\big(\mathbb{R}^n\times(0,T))$. Our goal is to utilize classical Kru\v{z}kov-entropies of the form $\eta(u):=\eta(u,c)=|u-c|$, however, in this case, an explicit formula for $q_i$ does not seem to reveal itself. Instead, during the vanishing viscosity derivation we rely on \eqref{eq:eta_q} to arrive at the following definition:
\begin{definition}\label{def:entropy}
	A function $u\in\mathcal{L}^1(\mathbb{R}^n\times(0,T))\cap\mathcal{L}^{\infty}(\mathbb{R}^n\times(0,T))$ is an entropy solution of \eqref{eq:CP} if the inequality
	\begin{equation}
		\begin{aligned}
			&\int_0^T\int_{\mathbb{R}^n}\qty\big|u-c|\pdv{f}{t}\dd{x}\dd{t}\\
			&+\int_0^T\int_{\mathbb{R}^n}\int_{\mathbb{R}^n}\sum_{i=1}^k\frac{\tau_{\beta_i}f\sign_0(\tau_{\beta_i}u-c)-f\sign_0(u-c)}{\norm{\beta_i}_{\mathbb{R}^n}}\qty\big(\phi_i(u,\tau_{\beta_i}u)-\phi_i(c,c))\omega_i\dd{h}\dd{x}\dd{t}\ge0
		\end{aligned}
	\end{equation}
	holds for any $T>0$, nonnegative $f\in\mathcal{C}_0^{\infty}\qty\big(\mathbb{R}^n\times(0,T))$ and $c\in\mathbb{R}$.
\end{definition}

\begin{rem}
	Let the functions $\tilde q_i$ be given by\footnote{As already noted by \cite[Definition 2.2]{Fjordholm2021}, the second line is not identical to the corresponding equation in \cite[p. 2470]{Du2017}, which is assumed to be a misprint. Here we gave a more straightforward formula.}
	\begin{equation}
		\begin{aligned}
			\tilde q_i(a,b,c)&=\phi_i(a\vee c,b\vee c)-\phi_i(a\wedge c,b\wedge c)=\phi_i\qty\big(\max\qty{a,c},\max\qty{b,c})-\phi_i\qty\big(\min\qty{a,c},\min\qty{b,c})\\
			&=\frac{\sign_0(a-c)+\sign_0(b-c)}{2}\qty\big(\phi_i(a,b)-\phi_i(c,c))+\frac{\sign_0(a-c)-\sign_0(b-c)}{2}\qty\big(\phi_i(a,c)-\phi_i(c,b)),
		\end{aligned}
	\end{equation}
	where
	\begin{equation}
		\sign_0(x)=\begin{cases}
			1\quad&x>0,\\
			0\quad&x=0,\\
			-1\quad&x<0.
		\end{cases}
	\end{equation}
	Then the properties of $\phi_i$ after adding and subtracting $\phi_i(c,c)$ imply that
	\begin{equation}
		\sign_0(u-c)\int_{\mathbb{R}^n}\frac{\phi_i(u,\tau_{\beta_i}u)-\phi_i(\tau_{-\beta_i}u,u)}{\norm{\beta_i}_{\mathbb{R}^n}}\omega_i\dd{h}\ge\int_{\mathbb{R}^n}\frac{\tilde q_i(u,\tau_{\beta_i}u,c)-\tilde q_i(\tau_{-\beta_i}u,u,c)}{\norm{\beta_i}_{\mathbb{R}^n}}\omega_i\dd{h},
	\end{equation}
	and thus it seems reasonable to define entropy solutions using $\tilde q_i$ as entropy fluxes corresponding to the entropy $|u-c|$. But, in fact, using the product rule for difference quotients shows that
	\begin{equation}
		\begin{aligned}
			&\int_0^T\int_{\mathbb{R}^n}\int_{\mathbb{R}^n}\sum_{i=1}^k\frac{\tau_{\beta_i}f\sign_0(\tau_{\beta_i}u-c)-f\sign_0(u-c)}{\norm{\beta_i}_{\mathbb{R}^n}}\qty\big(\phi_i(u,\tau_{\beta_i}u)-\phi_i(c,c))\omega_i\dd{h}\dd{x}\dd{t}\\
			&=\int_0^T\int_{\mathbb{R}^n}\int_{\mathbb{R}^n}\sum_{i=1}^k\frac{\tau_{\beta_i}f-f}{\norm{\beta_i}_{\mathbb{R}^n}}\sign_0(\tau_{\beta_i}u-c)\qty\big(\phi_i(u,\tau_{\beta_i}u)-\phi_i(c,c))\omega_i\dd{h}\dd{x}\dd{t}\\
			&+\int_0^T\int_{\mathbb{R}^n}\int_{\mathbb{R}^n}\sum_{i=1}^kf\frac{\sign_0(\tau_{\beta_i}u-c)-\sign_0(u-c)}{\norm{\beta_i}_{\mathbb{R}^n}}\qty\big(\phi_i(u,\tau_{\beta_i}u)-\phi_i(c,c))\omega_i\dd{h}\dd{x}\dd{t}.
		\end{aligned}
	\end{equation}
	Clearly
	\begin{equation}
		\sign_0(\tau_{\beta_i}v-c)\qty\big[\phi_i(v,\tau_{\beta_i}v)-\phi_i(c,c)]\le\phi_i(v\vee c,\tau_{\beta_i}v\vee c)-\phi_i(v\wedge c,\tau_{\beta_i}v\wedge c)=\tilde q_i(v,\tau_{\beta_i}v,c)\label{eq:Bv_ineq1}
	\end{equation}
	and similarly
	\begin{equation}
		-\sign_0(v-c)\qty\big[\phi_i(v,\tau_{\beta_i}v)-\phi_i(c,c)]\le-\tilde q_i(v,\tau_{\beta_i}v,c)
	\end{equation}
	holds, thus
	\begin{equation}
		\qty\big[\sign_0(\tau_{\beta_i}v-c)-\sign_0(v-c)]\qty\big[\phi_i(v,\tau_{\beta_i}v)-\phi_i(c,c)]\le0\label{eq:Bv_ineq2}
	\end{equation}
	and finally
	\begin{equation}
		\begin{aligned} 
			&\int_0^T\int_{\mathbb{R}^n}\int_{\mathbb{R}^n}\sum_{i=1}^k\frac{\tau_{\beta_i}f\sign_0(\tau_{\beta_i}u-c)-f\sign_0(u-c)}{\norm{\beta_i}_{\mathbb{R}^n}}\qty\big(\phi_i(u,\tau_{\beta_i}u)-\phi_i(c,c))\omega_i\dd{h}\dd{x}\dd{t}\\
			&\le\int_0^T\int_{\mathbb{R}^n}\int_{\mathbb{R}^n}\sum_{i=1}^k\frac{\tau_{\beta_i}f-f}{\norm{\beta_i}_{\mathbb{R}^n}}\tilde q_i(u,\tau_{\beta_i}u,c)\omega_i\dd{h}\dd{x}\dd{t};
		\end{aligned}
	\end{equation}
	that is, in some sense, the inequality in Definition \ref{def:entropy} is \emph{more precise} in selecting the physically relevant weak solution than the right-hand side of the above inequality. This precision turns out to be crucial in later steps; the operator defined in Definition \ref{def:A0} does not seem to be accretive with the functions $\tilde q_i$ which is an essential property to derive uniqueness of solutions via the Crandall-Liggett theorem.
\end{rem}}

Throughout the paper difference quotients will be denoted by
\begin{equation}
	D^{y}f=\frac{\tau_yf-f}{\norm{y}_{\mathbb{R}^n}},
\end{equation}
where $y\in\mathbb{R}^n$ and the partial derivative of the $\phi_i$ functions with respect to their first and second argument will be denoted by $\phi_{i,1}'$ and $\phi_{i,2}'$, respectively. For open subsets $\Omega$ of $\mathbb{R}^n$ let $\mathcal{W}^{k,p}(\Omega)$ denote the Sobolev space of functions whose distributional derivatives of order at most $k$ are in $\mathcal{L}^p(\Omega)$. The space $\mathcal{W}_0^{k,p}(\Omega)\subset\mathcal{W}^{k,p}(\Omega)$ denotes the set of functions vanishing at the boundary of $\Omega$ and $\mathcal{W}_{loc}^{k,p}(\Omega)$ denotes the set of locally integrable functions whose restriction to any pre-compact $Q\Subset\Omega$ lies in $\mathcal{W}^{k,p}(Q)$. We will use the standard notation $\mathcal{H}^k(\Omega):=\mathcal{W}^{k,2}(\Omega)$.

We rewrite the nonlocal conservation law \eqref{eq:CP} using the operator
\begin{equation}\label{eq:B}
	Bu=\int_{\mathbb{R}^n}\sum_{i=1}^k\frac{\phi_i(u,\tau_{\beta_i}u)-\phi_i(\tau_{-\beta_i}u,u)}{\norm{\beta_i}_{\mathbb{R}^n}}\omega_i\dd{h}
\end{equation}
as
\begin{equation}\label{eq:CP:B}
	\pdv{u}{t}+Bu=0.
\end{equation}

The following lemma shows that {\color{black}for continuously differentiable fluxes} the operator $B$ maps $\mathcal{W}^{1,p}(\mathbb{R}^n)$ to $\mathcal{L}^p(\mathbb{R}^n)$.

\begin{lemma}\label{lem:DB}
	Let $\phi_i\in\mathcal{C}^1(\mathbb{R}\times\mathbb{R})$ have bounded partial derivatives. Then $v\in\mathcal{W}^{1,p}(\mathbb{R}^n)$ implies $Bv\in\mathcal{L}^p(\mathbb{R}^n)$ for all $1\le p<\infty$. In particular, there is a constant $C=C(p)>0$ such that $\norm{Bv}_{\mathcal{L}^p(\mathbb{R}^n)}\le C\norm{\nabla v}_{\mathcal{L}^p(\mathbb{R}^n)}$ for all $v\in\mathcal{W}^{1,p}(\mathbb{R}^n)$.
\end{lemma}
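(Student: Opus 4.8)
The plan is to exploit the difference structure of $B$ together with the Lipschitz continuity of each $\phi_i$ in each of its two arguments (which follows from the boundedness of the partial derivatives) in order to absorb the singular weight $\norm{\beta_i}_{\mathbb{R}^n}^{-1}$ into a difference quotient of $v$, and then to invoke the classical $\mathcal{L}^p$ bound for translates of Sobolev functions.

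First I would fix $1\le p<\infty$ and $i\in\{1,\dots,k\}$, abbreviate $\beta_i=\beta_i(h)$, and split the numerator by adding and subtracting $\phi_i(v(x),v(x))$:
\[
\phi_i\qty\big(v(x),v(x+\beta_i))-\phi_i\qty\big(v(x-\beta_i),v(x))=\qty\big[\phi_i(v(x),v(x+\beta_i))-\phi_i(v(x),v(x))]+\qty\big[\phi_i(v(x),v(x))-\phi_i(v(x-\beta_i),v(x))].
\]
Writing $L:=\max_{1\le i\le k}\qty\big(\norm{\phi_{i,1}'}_{\mathcal{L}^{\infty}}+\norm{\phi_{i,2}'}_{\mathcal{L}^{\infty}})<\infty$, the mean value theorem (in the second slot for the first bracket, in the first slot for the second) gives the pointwise estimate
\[
\left|\frac{\phi_i(v(x),v(x+\beta_i))-\phi_i(v(x-\beta_i),v(x))}{\norm{\beta_i}_{\mathbb{R}^n}}\right|\le L\qty\big(|D^{\beta_i}v(x)|+|D^{-\beta_i}v(x)|),
\]
since $|v(x+\beta_i)-v(x)|=\norm{\beta_i}_{\mathbb{R}^n}\,|D^{\beta_i}v(x)|$ and $|v(x)-v(x-\beta_i)|=\norm{\beta_i}_{\mathbb{R}^n}\,|D^{-\beta_i}v(x)|$. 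Summing over $i$ and integrating against $\omega_i(\beta_i(h))\dd{h}$ then yields the pointwise bound $|Bv(x)|\le L\sum_{i=1}^k\int_{\mathbb{R}^n}\qty\big(|D^{\beta_i}v(x)|+|D^{-\beta_i}v(x)|)\omega_i\qty\big(\beta_i(h))\dd{h}$.

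To finish I would recall the standard difference-quotient estimate $\norm{\tau_yv-v}_{\mathcal{L}^p(\mathbb{R}^n)}\le\norm{y}_{\mathbb{R}^n}\norm{\nabla v}_{\mathcal{L}^p(\mathbb{R}^n)}$ valid for $v\in\mathcal{W}^{1,p}(\mathbb{R}^n)$ and $y\neq0$ — proved first for $v\in\mathcal{C}_c^{\infty}(\mathbb{R}^n)$ by writing $v(x+y)-v(x)=\int_0^1\nabla v(x+ty)\cdot y\dd{t}$ and applying Minkowski's integral inequality, and then extended by density — which gives $\norm{D^{\pm\beta_i}v}_{\mathcal{L}^p(\mathbb{R}^n)}\le\norm{\nabla v}_{\mathcal{L}^p(\mathbb{R}^n)}$ uniformly in $h$. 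The integrand above is jointly measurable in $(x,h)$ by continuity of the $\phi_i$ and measurability of $v$, and $\omega_i(\beta_i(\cdot))\dd{h}$ has unit mass, so Minkowski's integral inequality in the $x$ variable gives $\norm{Bv}_{\mathcal{L}^p(\mathbb{R}^n)}\le L\sum_{i=1}^k\int_{\mathbb{R}^n}\qty\big(\norm{D^{\beta_i}v}_{\mathcal{L}^p(\mathbb{R}^n)}+\norm{D^{-\beta_i}v}_{\mathcal{L}^p(\mathbb{R}^n)})\omega_i\qty\big(\beta_i(h))\dd{h}\le 2Lk\,\norm{\nabla v}_{\mathcal{L}^p(\mathbb{R}^n)}$, which is the claim with $C=2Lk$ (in fact independent of $p$; note that finiteness of $\supp\omega_i$ is not needed here, only $\omega_i(\beta_i(\cdot))\in\mathcal{L}^1$).

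I do not expect a genuine obstacle: the only delicate point is the interplay between the singular factor $\norm{\beta_i}_{\mathbb{R}^n}^{-1}$ and the gain of one derivative, and this is precisely what the separate Lipschitz continuity of $\phi_i$ in each argument delivers, since it makes the numerator $O(\norm{\beta_i}_{\mathbb{R}^n})$ uniformly in $h$, so that after dividing one is left with difference quotients of $v$ that are controlled in $\mathcal{L}^p$ by $\norm{\nabla v}_{\mathcal{L}^p}$ uniformly in $h$.
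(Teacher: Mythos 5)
Your proof is correct, and its core is the same as the paper's: use the boundedness of $\phi_{i,1}'$, $\phi_{i,2}'$ to bound the numerator by $K_i\qty\big(|\tau_{\beta_i}v-v|+|v-\tau_{-\beta_i}v|)$, divide by $\norm{\beta_i}_{\mathbb{R}^n}$ to produce difference quotients, and control these uniformly in $h$ by $\norm{\nabla v}_{\mathcal{L}^p(\mathbb{R}^n)}$ via the classical translate estimate (the paper cites \cite[Proposition 9.3(iii)]{Brezis2010} for exactly this). Where you diverge is the treatment of the $h$-integral: the paper raises the inner integral to the $p$th power, applies H\"older's inequality against $\omega_i\in\mathcal{L}^q$, and then uses Fubini together with the finiteness of $\supp(\omega_i)$, which produces a constant involving $2^{p-1}k^{p-1}K_i^p\norm{\omega_i}_{\mathcal{L}^q}^p$ and the measure of the support; you instead apply Minkowski's integral inequality directly to the unit-mass measure $\omega_i\qty\big(\beta_i(h))\dd{h}$. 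Your route is slightly cleaner and buys two things the paper's does not: a constant $C=2Lk$ that is independent of $p$, and the observation that neither $\omega_i\in\mathcal{L}^\infty$ nor finiteness of the support is needed for this particular lemma, only $\omega_i\qty\big(\beta_i(\cdot))\in\mathcal{L}^1(\mathbb{R}^n)$. Both arguments are equally rigorous; the only point worth being explicit about (in either version) is that the set where $\beta_i(h)=0$, on which the difference quotient is undefined, is Lebesgue-null, so the pointwise bounds hold for a.e.\ $h$.
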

\begin{proof}
	Let $|\phi_{i,1}'|\le K_{i,1}$ and $|\phi_{i,2}'|\le K_{i,2}$ and $\frac{1}{p}+\frac{1}{q}=1$. Setting $K_i=\max\qty{K_{i,1},K_{i,2}}$ we find that
	\begin{align}\label{eq:Bu_upper}
		\norm{Bv}_{\mathcal{L}^p(\mathbb{R}^n)}^p&=\int_{\mathbb{R}^n}\qty\Bigg|\int_{\mathbb{R}^n}\sum_{i=1}^k\frac{\phi_i(v,\tau_{\beta_i}v)-\phi_i(\tau_{-\beta_i}v,v)}{\norm{\beta_i}_{\mathbb{R}^n}}\omega_i\dd{h}|^p\dd{x}\\
		&\le \int_{\mathbb{R}^n}\qty\Bigg(\int_{\mathbb{R}^n}\sum_{i=1}^kK_i\frac{|v-\tau_{-\beta_i}v|+|\tau_{\beta_i}v-v|}{\norm{\beta_i}_{\mathbb{R}^n}}\omega_i\dd{h})^p\dd{x}\\
		&\le k^{p-1}\sum_{i=1}^kK_i^p\int_{\mathbb{R}^n}\qty\Bigg(\int_{\mathbb{R}^n}\qty\Big(\qty\big|D^{\beta_i}\tau_{-\beta_i}v|+\qty\big|D^{\beta_i}v|)\omega_i\dd{h})^p\dd{x}\\
		&\le k^{p-1}\sum_{i=1}^kK_i^p\norm{\omega_i}_{\mathcal{L}^q(\mathbb{R}^n)}^p\int_{\mathbb{R}^n}\int_{\supp(\omega_i)}\qty\Big(\qty\big|D^{\beta_i}\tau_{-\beta_i}v|+\qty\big|D^{\beta_i}v|)^p\dd{h}\dd{x}\\
		&\le2^{p-1}k^{p-1}\sum_{i=1}^kK_i^p\norm{\omega_i}_{\mathcal{L}^q(\mathbb{R}^n)}^p\int_{\supp(\omega_i)}\norm{\nabla v}_{\mathcal{L}^p(\mathbb{R}^n)}^p\dd{h}= C\norm{\nabla v}_{\mathcal{L}^p(\mathbb{R}^n)}^p,
	\end{align}
	where we used the Lipschitz continuity of $\phi$ in the first inequality, Hölder's inequality in the third inequality and finally Fubini's theorem and \cite[Proposition 9.3(iii)]{Brezis2010} in the fourth inequality.
\end{proof}

The continuity of $B$ is established by our next lemma.

\begin{lemma}\label{lem:Bcont}
	Let the assumptions of Lemma \ref{lem:DB} hold. Then $B$ is continuous from $\mathcal{H}^1(\mathbb{R}^n)$ to $\mathcal{L}^2(\mathbb{R}^n)$.
\end{lemma}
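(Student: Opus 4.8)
The plan is to reduce the claim to a quantitative Lipschitz-type estimate on $B$ over $\mathcal{H}^1(\mathbb{R}^n)$, exploiting the structure already used in Lemma \ref{lem:DB}. First I would fix $v,w\in\mathcal{H}^1(\mathbb{R}^n)$ and write $Bv-Bw$ as the integral over $h$ and the sum over $i$ of the difference quotients of $\phi_i(v,\tau_{\beta_i}v)-\phi_i(w,\tau_{\beta_i}w)$ and of $\phi_i(\tau_{-\beta_i}v,v)-\phi_i(\tau_{-\beta_i}w,w)$, divided by $\|\beta_i\|_{\mathbb{R}^n}$. Since $\phi_i\in\mathcal{C}^1$ has bounded partial derivatives, $\phi_i$ is globally Lipschitz, so each such difference is pointwise bounded by $K_i\bigl(|v-w|+|\tau_{\beta_i}v-\tau_{\beta_i}w|\bigr)$ and likewise with $\tau_{-\beta_i}$. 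This would give, exactly as in the chain of inequalities \eqref{eq:Bu_upper} but applied to $v-w$ in place of $v$, the bound $\|Bv-Bw\|_{\mathcal{L}^2(\mathbb{R}^n)}\le C\,\|\nabla(v-w)\|_{\mathcal{L}^2(\mathbb{R}^n)}\le C\,\|v-w\|_{\mathcal{H}^1(\mathbb{R}^n)}$, i.e. $B$ is in fact Lipschitz from $\mathcal{H}^1(\mathbb{R}^n)$ to $\mathcal{L}^2(\mathbb{R}^n)$, which is much stronger than continuity.

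The key steps, in order, would be: (1) subtract and telescope, writing the integrand of $Bv-Bw$ as a sum of four $\phi_i$-differences; (2) apply the mean value theorem (or the Lipschitz bound) to each difference to pass to $|v-w|$, $|\tau_{\pm\beta_i}(v-w)|$; (3) recognize $|v-w|$ paired with $|\tau_{\beta_i}(v-w)-(v-w)|$-type terms — more precisely, after adding and subtracting, bound everything by $\|\beta_i\|_{\mathbb{R}^n}$ times difference quotients $|D^{\beta_i}\tau_{-\beta_i}(v-w)|+|D^{\beta_i}(v-w)|$ so the $\|\beta_i\|_{\mathbb{R}^n}$ in the denominator cancels; (4) apply Minkowski's inequality for the finite sum over $i$ and the $\mathcal{L}^2$-triangle inequality, then Hölder in $h$ against $\omega_i\in\mathcal{L}^2$ (which holds since $\omega_i\in\mathcal{L}^1\cap\mathcal{L}^\infty$), Fubini, and the standard bound $\||D^{y}g|\|_{\mathcal{L}^p}\le\|\nabla g\|_{\mathcal{L}^p}$ from \cite[Proposition 9.3]{Brezis2010} with $g=v-w$, uniformly for $y$ in the (bounded) support of $\omega_i$; (5) collect constants into a single $C$.

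One minor subtlety worth flagging explicitly: the denominators $\|\beta_i(h)\|_{\mathbb{R}^n}$ vanish near $h=0$, so the pointwise cancellation in step (3) must be carried out before integrating in $h$; this is precisely the role of rewriting the $\phi_i$-differences in terms of difference quotients $D^{\beta_i}(\cdot)$, as is already done in the proof of Lemma \ref{lem:DB}. Since $\phi_i$ is jointly Lipschitz, no further regularity of $v,w$ beyond $\mathcal{H}^1$ is needed, and the argument does not require the evenness or support hypotheses on $\omega_i$ beyond $\omega_i\in\mathcal{L}^1(\mathbb{R}^n)\cap\mathcal{L}^\infty(\mathbb{R}^n)$ with bounded support. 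The main (and only mild) obstacle is bookkeeping: making sure the four telescoped terms are each independently controlled and that the constant is uniform over the supports of the finitely many $\omega_i$; there is no genuine analytic difficulty, and in fact the proof establishes global Lipschitz continuity of $B\colon\mathcal{H}^1(\mathbb{R}^n)\to\mathcal{L}^2(\mathbb{R}^n)$ with the same constant $C$ as in Lemma \ref{lem:DB} (for $p=2$).
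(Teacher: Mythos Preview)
Your step (3) does not work, and in fact the conclusion you are aiming for is false in general: under the hypotheses of Lemma~\ref{lem:DB} ($\phi_i\in\mathcal{C}^1$ with bounded, but not necessarily Lipschitz, partial derivatives), the operator $B$ is \emph{not} globally Lipschitz from $\mathcal{H}^1(\mathbb{R}^n)$ to $\mathcal{L}^2(\mathbb{R}^n)$. The point where the argument breaks is the claimed ``recognition'' of difference quotients. After applying the Lipschitz bound in step (2) to $\phi_i(v,\tau_{\beta_i}v)-\phi_i(w,\tau_{\beta_i}w)$ and to $\phi_i(\tau_{-\beta_i}v,v)-\phi_i(\tau_{-\beta_i}w,w)$, you obtain terms of the form $|v-w|$, $|\tau_{\beta_i}(v-w)|$, $|\tau_{-\beta_i}(v-w)|$, all divided by $\|\beta_i\|_{\mathbb{R}^n}$. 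These are \emph{not} difference quotients of $v-w$: no amount of adding and subtracting turns $|v-w|+|\tau_{\beta_i}(v-w)|$ into $|\tau_{\beta_i}(v-w)-(v-w)|$. The reason the analogous step succeeds in Lemma~\ref{lem:DB} is that there the Lipschitz bound is applied to $\phi_i(v,\tau_{\beta_i}v)-\phi_i(\tau_{-\beta_i}v,v)$, i.e.\ to two evaluations of $\phi_i$ at arguments that are \emph{shifts of the same function} $v$, so the output is automatically $|v-\tau_{-\beta_i}v|+|\tau_{\beta_i}v-v|$. Here the two evaluations involve \emph{different} functions $v$ and $w$, and the Lipschitz bound produces $|v-w|$ with no shift structure; the $\|\beta_i\|_{\mathbb{R}^n}$ in the denominator does not cancel.

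The paper's proof takes a genuinely different route that only yields continuity. One writes $Bu-Bv$ as $\int\sum_i D^{\beta_i}\bigl[\phi_i(\tau_{-\beta_i}u,u)-\phi_i(\tau_{-\beta_i}v,v)\bigr]\omega_i\,dh$, bounds the $\mathcal{L}^2$-norm of the difference quotient by the $\mathcal{L}^2$-norm of the gradient of $\phi_i(\tau_{-\beta_i}u,u)-\phi_i(\tau_{-\beta_i}v,v)$, and then expands that gradient via the chain rule. After inserting mixed terms one is left with contributions of the type $\bigl[\phi_{i,j}'(\tau_{-\beta_i}u,u)-\phi_{i,j}'(\tau_{-\beta_i}v,v)\bigr]\nabla u$, which cannot be estimated by $\|u-v\|_{\mathcal{H}^1}$; instead one passes to a subsequence converging a.e.\ and invokes dominated convergence, using only continuity (not Lipschitz continuity) of $\phi_{i,j}'$. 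This Nemytskii-type argument is the missing ingredient in your proposal.
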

\begin{proof}
	Let $u,v\in\mathcal{H}^1(\mathbb{R}^n)$. Similar estimates as in the proof of Lemma \ref{lem:DB} lead to
	\begin{equation}
		\begin{aligned}
			&\norm{Bu-Bv}_{\mathcal{L}^2(\mathbb{R}^n)}^2=\int_{\mathbb{R}^n}\qty\Bigg(\int_{\mathbb{R}^n}\sum_{i=1}^kD^{\beta_i}\qty\big[\phi_i(\tau_{-\beta_i}u,u)-\phi_i(\tau_{-\beta_i}v,v)]\omega_i\dd{h})^2\dd{x}\\
			&\le C\sum_{i=1}^k\norm{D^{\beta_i}\qty\big[\phi_i(\tau_{-\beta_i}u,u)-\phi_i(\tau_{-\beta_i}v,v)]}_{\mathcal{L}^2(\mathbb{R}^n)}^2\le C\sum_{i=1}^k\norm{\nabla\qty\big[\phi_i(\tau_{-\beta_i}u,u)-\phi_i(\tau_{-\beta_i}v,v)]}_{\mathcal{L}^2(\mathbb{R}^n)}^2\\
			&=C\sum_{i=1}^k\Big\|\phi_{i,1}'(\tau_{-\beta_i}u,u)\nabla\tau_{-\beta_i}u+\phi_{i,2}'(\tau_{-\beta_i}u,u)\nabla u-\phi_{i,1}'(\tau_{-\beta_i}v,v)\nabla\tau_{-\beta_i}v-\phi_{i,2}'(\tau_{-\beta_i}v,v)\nabla v\Big\|_{\mathcal{L}^2(\mathbb{R}^n)}^2.
		\end{aligned}
	\end{equation}
	By introducing mixed terms we find that
	\begin{equation}
		\begin{aligned}\label{eq:Bucont}
			&\norm{Bu-Bv}_{\mathcal{L}^2(\mathbb{R}^n)}^2\le C\sum_{i=1}^k\Big(\big\|\qty\big[\phi_{i,1}'(\tau_{-\beta_i}u,u)-\phi_{i,1}'(\tau_{-\beta_i}v,v)]\nabla\tau_{-\beta_i}u\big\|_{\mathcal{L}^2(\mathbb{R}^n)}\\
			&+\big\|\qty\big[\phi_{i,2}'(\tau_{-\beta_i}u,u)-\phi_{i,2}'(\tau_{-\beta_i}v,v)]\nabla u\big\|_{\mathcal{L}^2(\mathbb{R}^n)}\\
			&+\norm{\phi_{i,1}'(\tau_{-\beta_i}v,v)}_{\mathcal{L}^{\infty}(\mathbb{R}^n)}^2\norm{\nabla\tau_{-\beta_i}(u-v)}_{\mathcal{L}^2(\mathbb{R}^n)}^2+\norm{\phi_{i,2}'(\tau_{-\beta_i}v,v)}_{\mathcal{L}^{\infty}(\mathbb{R}^n)}^2\norm{\nabla(u-v)}_{\mathcal{L}^2(\mathbb{R}^n)}^2\Big).
		\end{aligned}
	\end{equation}
	Let $v$ converge to $u$ in $\mathcal{H}^1(\mathbb{R}^n)$ through a sequence $\qty{u_n}\subset\mathcal{H}^1(\mathbb{R}^n)$ and let $\qty{u_{n_k}}$ be a subsequence of $\qty{u_n}$. Since $u_{n_k}$ also converges to $u$ as $n_k\rightarrow\infty$, there exists a subsequence $\{u_{n_{k_l}}\}$ of $\{u_{n_k}\}$ such that $u_{n_{k_l}}\rightarrow u$ a.e. as $n_{k_l}\rightarrow\infty$. Let $|\phi_{i,1}'|\le K_{i,1}$ and $|\phi_{i,2}'|\le K_{i,2}$ and observe that
	\begin{equation}
		\begin{aligned}
			&\qty\Big|\qty\big[\phi_{i,1}'(\tau_{-\beta_i}u,u)-\phi_{i,1}'(\tau_{-\beta_i}u_{n_{k_l}},u_{n_{k_l}})]\nabla\tau_{-\beta_i}u|\le2K_{i,1}\qty\big|\nabla\tau_{-\beta_i}u|,\\
			&\qty\Big|\qty\big[\phi_{i,2}'(\tau_{-\beta_i}u,u)-\phi_{i,2}'(\tau_{-\beta_i}u_{n_{k_l}},u_{n_{k_l}})]\nabla\tau_{-\beta_i}u|\le2K_{i,2}\qty\big|\nabla u|.
		\end{aligned}
	\end{equation}
	Using the dominated convergence theorem and the continuity of $\phi_i$ we find that the first two terms in \eqref{eq:Bucont} converge to zero as $n_{k_l}\rightarrow\infty$. Similarly, since $\phi_{i,1}'$ and $\phi_{i,2}'$ are bounded and $u_{n_{k_l}}\rightarrow u$ in $\mathcal{H}^1(\mathbb{R}^n)$, the second two terms also converge to zero as $n_{k_l}\rightarrow\infty$.	Since $\qty{u_{n_k}}$ was arbitrary we conclude that each subsequence of the sequence $\norm{Bu-Bu_n}_{\mathcal{L}^2(\mathbb{R})}^2$ has a convergent subsequence with limit zero; that is, the sequence itself converges to zero and the proof is complete.
\end{proof}

{\color{black}
\begin{rem}
	In \cite{Fjordholm2021} the authors consider the case (in one dimension) when $\int_{\mathbb{R}^n}\frac{\omega_i(\beta_i)}{\norm{\beta_i}_{\mathbb{R}^n}}<\infty$. In this case the above calculations can be modified to show that $B:\mathcal{L}^1(\mathbb{R}^n)\mapsto\mathcal{L}^1(\mathbb{R}^n)$ is Lipschitz continuous. Hence, standard contraction mapping principle shows existence and uniqueness without entropy conditions. However, in this special case the kernels $\omega_i$ assign small weight to close interactions and more weight as the interaction distance increases. As such, the model's applicability to physically relevant problems is reduced.
\end{rem}
}

We will consider $X=\mathcal{L}^1(\mathbb{R}^n)$ and proceed by verifying the hypotheses of the Crandall-Liggett Theorem for an appropriate operator $A$ in $\mathcal{L}^1(\mathbb{R}^n)$ that is, in some sense, the generalization of the $B$ of $\eqref{eq:CP:B}$. The operator $A$ will be the closure of the operator $A_0$ defined as follows.

\begin{definition}\label{def:A0}
	Let $A_0$ be the operator in $\mathcal{L}^1(\mathbb{R}^n)$ defined by: $v\in D(A_0)$ and $w\in A_0v$ if
	\begin{enumerate}[label=\normalfont(\roman*),ref=\ref{def:A0}\normalfont(\roman*)]
		\item $v,w\in\mathcal{L}^1(\mathbb{R}^n)$,
		\item \label{A0:phi_L1} $\phi_i(v,\tau_{\beta_i(h)}v)\in\mathcal{L}^1(\mathbb{R}^n)$ for $h\in\supp(\omega_i)$ and $i=1,2,\dots,k$,
		\item \label{A0:entropy} the inequality
			\begin{equation}
				\int_{\mathbb{R}^n}\sign_0(v-c)wf\dd{x}+\int_{\mathbb{R}^n}\int_{\mathbb{R}^n}\sum_{i=1}^kD^{\beta_i}\qty\big[f\sign_0(v-c)]\qty\big(\phi_i(v,\tau_{\beta_i}v)-\phi_i(c,c))\omega_i\dd{h}\dd{x}\ge0\label{eq:A0}
			\end{equation}
			holds for any nonnegative $f\in\mathcal{C}_0^{\infty}(\mathbb{R}^n)$ and $c\in\mathbb{R}$.
	\end{enumerate}
\end{definition}

As we will see later, inequality in Definition \ref{A0:entropy} ensures that if $u\in D(A_0)$ is a solution of the abstract Cauchy problem, then it satisfies the entropy inequality in Definition \ref{def:entropy}. Lemmata \ref{lem:A0_single} and \ref{lem:B_in_A0} show that under appropriate circumstances $A_0$ is single-valued and coincides with $B$, further substantiating our definition.

While the accretivity of $A_0$, and thus the accretivity of its closure $A$, can be established in a straightforward manner using a tool described in \cite[Proposition 2.1]{Crandall1972} (see Proposition \ref{prop:A0_accretive}), the verification of the range condition \eqref{eq:range} is more intricate. In fact, it requires the treatment of the stationary equation
\begin{equation}\label{eq:very_regul}
	u+Bu=g.
\end{equation}

We define the generalized solutions of \eqref{eq:very_regul} in terms of $A$.
\begin{definition}
	Let $g\in\mathcal{L}^1(\mathbb{R}^n)$. Then $u\in\mathcal{L}^1(\mathbb{R}^n)$ is a generalized solution of \eqref{eq:very_regul} if $u\in D(A)$ and $g\in(I+A)u$.
\end{definition}

Our first main result is the following theorem.

\begin{thm}\label{thm:A_generates}
	Let $\phi_i\in\mathcal{W}_{loc}^{1,\infty}(\mathbb{R}\times\mathbb{R})$ and $g\in\mathcal{L}^1(\mathbb{R}^n)$. Then $A$ satisfies the assumptions of the Crandall-Liggett Theorem on $\mathcal{L}^1(\mathbb{R}^n)$ and the unique generalized solution of \eqref{eq:very_regul} is given by $u=(I+A)^{-1}g$.
\end{thm}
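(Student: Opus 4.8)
The plan is to verify the two hypotheses of the Crandall–Liggett Theorem for $A = \overline{A_0}$: accretivity, and the range condition $R(I+\lambda A) = \mathcal{L}^1(\mathbb{R}^n)$ for all $\lambda>0$. Accretivity of $A_0$ (hence of its closure $A$) should follow from the entropy inequality in Definition \ref{A0:entropy} together with the bracket characterization of accretivity in $\mathcal{L}^1$, as in \cite[Proposition 2.1]{Crandall1972}: given $v_1,w_1$ and $v_2,w_2$ with $w_j \in A_0 v_j$, one tests the two entropy inequalities against a suitable approximation of $\sign(v_1-v_2)$ (realized as $c \mapsto$ integration, or via a cutoff $f$ converging to $1$ and $c$ chosen to "see" the crossing set), and uses the monotonicity of $\phi_i$ in each argument to kill the nonlocal flux terms, leaving $\int \sign_0(v_1-v_2)(w_1-w_2)\,dx \ge 0$. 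This is precisely the point where the sharper entropy flux of Definition \ref{def:entropy} (rather than the $\tilde q_i$ version) is needed, as flagged in the remark; I expect this part to be comparatively routine modulo care with the $\sign_0$ regularization and the passage $f\to 1$.

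The substantive work is the range condition: for every $g \in \mathcal{L}^1(\mathbb{R}^n)$ and $\lambda>0$ one must produce $u \in D(A)$ with $g \in (I+\lambda A)u$, i.e. a generalized solution of $u + \lambda Bu = g$ (after rescaling, $u+Bu=g$). The strategy I would follow, in the spirit of \cite[Chapter 4]{Lions1982} and \cite[Proposition IV.3]{Crandall1983}, is: (1) first solve the regularized stationary problem for smooth data. Assume $g$ is smooth, mollify the fluxes so that $\phi_i \in \mathcal{C}^1$ with bounded derivatives, and solve $u + Bu = g$ in $\mathcal{H}^1(\mathbb{R}^n)$ by a fixed-point / monotonicity argument: since $B$ is continuous $\mathcal{H}^1 \to \mathcal{L}^2$ (Lemma \ref{lem:Bcont}) and, crucially, accretive/monotone in the relevant sense, one can use either a Banach fixed point on $u \mapsto (I+\mu B)^{-1}(\dots)$ for small $\mu$ and continuation in $\mu$, or a Browder–Minty surjectivity argument for $I+B$ viewed through a vanishing-viscosity regularization $u - \epsilon\Delta u + Bu = g$ on which the theory of $m$-accretive operators in $\mathcal{L}^1$ is classical; then let $\epsilon \to 0$. (2) Derive a priori bounds uniform in the regularization: the $\mathcal{L}^1$-contraction estimate $\|u_1-u_2\|_{\mathcal{L}^1} \le \|g_1-g_2\|_{\mathcal{L}^1}$ (from accretivity of the approximants), an $\mathcal{L}^\infty$ bound from comparison with constants (using $\phi_i(c,c)$-terms and $\phi_i(0,0)=0$), and a $BV$ or equicontinuity-of-translates estimate via integration by parts for difference quotients to get compactness. (3) Pass to the limit, first removing the viscosity, then the flux mollification, then the smoothness of $g$, at each stage using the $\mathcal{L}^1$-contraction to extend by density and a weak/a.e. limit to preserve the entropy inequality \eqref{eq:A0}; the limit $u$ lies in $D(A)$ by definition of the closure, and $g \in (I+A)u$.

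The main obstacle, as the introduction itself signals, is exactly the range condition in the absence of flux derivatives: Crandall's original argument invokes the perturbation theorem \cite[Theorem 3.2]{Kobayashi1977}, which is unavailable here. So the delicate step is getting the stationary problem $u+Bu=g$ solved and, more importantly, showing the solution operator $(I+B)^{-1}$ constructed for smooth fluxes/data is an $\mathcal{L}^1$-contraction with the entropy inequality \eqref{eq:A0} built in, so that the limits in step (3) genuinely land in the graph of $A$ rather than merely in the graph of some larger operator. Concretely, the hard estimates are: (a) the comparison/$\mathcal{L}^\infty$ bound and the $\mathcal{L}^1$-contraction for the regularized stationary problem, which must be proved using only difference-quotient integration by parts and the monotonicity of the $\phi_i$, never their derivatives; and (b) ensuring enough compactness (translation equicontinuity in $\mathcal{L}^1$) to extract a limit, since we do not have a uniform $\mathcal{H}^1$ bound once the viscosity is removed. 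Granting these, the identification $u = (I+A)^{-1}g$ is immediate from single-valuedness of $(I+A)^{-1}$ on the range, which follows from accretivity.
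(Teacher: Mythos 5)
Your outline coincides with the paper's own strategy: accretivity of $A_0$ is proved by Kru\v{z}kov-type doubling of variables applied to the entropy inequality \eqref{eq:A0} together with Crandall's $\sign$ lemmas (Proposition \ref{prop:A0_accretive}); the range condition is obtained by solving the viscous stationary problem \eqref{eq:regul} with mollified $\mathcal{C}^1$ fluxes, deriving the $\mathcal{L}^1$ and $\mathcal{L}^{\infty}$ bounds and the $\mathcal{L}^1$-contraction purely by difference-quotient integration by parts (Lemmata \ref{lem:u_Lp} and \ref{lem:regul_L1+}), extracting a limit via translation equicontinuity and the Fréchet-Kolmogorov theorem as the viscosity and the flux mollification are removed simultaneously (Proposition \ref{prop:Tl}), and finally passing from $\mathcal{L}^1(\mathbb{R}^n)\cap\mathcal{L}^{\infty}(\mathbb{R}^n)$ to all of $\mathcal{L}^1(\mathbb{R}^n)$ by density and the contraction property of $T_\lambda$. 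The one place where your specifics diverge --- and would not go through as stated --- is the existence mechanism for the regularized elliptic problem. A Browder--Minty or ``classical $m$-accretive perturbation'' argument for $u-\epsilon\Delta u+\lambda Bu=g$ needs $\langle Bu,u\rangle_{\mathcal{L}^2}=0$ or monotonicity of $B$ on $\mathcal{L}^2$, which holds for the local flux but fails here; this is precisely why \cite[Theorem 3.2]{Kobayashi1977} is unavailable. A Banach fixed point for small $\lambda$ would need a Lipschitz bound for $B:\mathcal{H}^1(\mathbb{R}^n)\to\mathcal{L}^2(\mathbb{R}^n)$, whereas Lemma \ref{lem:Bcont} only gives continuity. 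The paper instead poses \eqref{eq:Dirichlet} on balls $B_r$ with Dirichlet boundary conditions and applies Schaefer's fixed point theorem \cite[Corollary 8.1]{Deimling1985}, which requires only continuity, compactness of $(I-\epsilon\Delta)^{-1}$, and an a priori bound on the solution set of $u=\eta Tu$ (supplied by the $\mathcal{L}^1$/$\mathcal{L}^{\infty}$ estimates), and then lets $r\to\infty$ using elliptic regularity (Proposition \ref{prop:regul_u_exist}). With that substitution your plan matches the proof.
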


Theorem \ref{thm:A_generates} and the Crandall-Liggett Theorem show that a semigroup of contractions is determined by the operator $A$, whose various properties are listed in the next theorem.

\begin{thm}\label{thm:A_go_brr}
	Let the assumptions of Theorem \ref{thm:A_generates} hold and $S$ be the semigroup of contractions on $\overline{D(A)}$ obtained from $A$ via the Crandall-Liggett Theorem on $\mathcal{L}^1(\mathbb{R}^n)$. Let $u,v\in\overline{D(A)}\cap\mathcal{L}^{\infty}(\mathbb{R}^n)$ and $t\ge 0$. Then
	\begin{enumerate}[label=\normalfont(\roman*),ref=\ref{thm:A_go_brr}\normalfont(\roman*)]
		\item {\rm(integrability)} $S(t)v\in\mathcal{L}^p(\mathbb{R}^n)$ for $p\ge1$ with $\norm{S(t)v}_{\mathcal{L}^p(\mathbb{R}^n)}\le\norm{v}_{\mathcal{L}^1(\mathbb{R}^n)}^{\frac{1}{p}}\norm{v}_{\mathcal{L}^{\infty}(\mathbb{R}^n)}^{1-\frac{1}{p}}$,
		\item {\rm(maximum principle)} $-\norm{v^-}_{\mathcal{L}^{\infty}(\mathbb{R}^n)}\le S(t)v\le\norm{v^+}_{\mathcal{L}^{\infty}(\mathbb{R}^n)}$, where $v^-=\max\qty{0,-v}$ and $v^+=\max\qty{0,v}$.
		\item {\rm(monotonicity)} $\norm{(S(t)u-S(t)v)^+}_{\mathcal{L}^1(\mathbb{R}^n)}\le\norm{(u-v)^+}_{\mathcal{L}^1(\mathbb{R}^n)}$,
		\item {\rm(equicontinuity)} if $y\in\mathbb{R}^n$, then
			\begin{equation}
				\int_{\mathbb{R}^n}\qty\big|S(t)v(x+y)-S(t)v(x)|\dd{x}\le\int_{\mathbb{R}^n}\qty\big|v(x+y)-v(x)|\dd{x},
			\end{equation}
		\item {\rm(conservation of mass)} $\int_{\mathbb{R}^n}S(t)v(x)\dd{x}=\int_{\mathbb{R}^n}v(x)\dd{x}$,
		\item \label{enum:entropy} $S(t)v$ satisfies the nonlocal entropy inequality in Definition \ref{def:entropy}.
	\end{enumerate}
\end{thm}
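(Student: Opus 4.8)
The plan is to exploit the fundamental fact from the Crandall–Liggett construction that $S(t)v = \lim_{\epsilon\to 0} (I + \tfrac{t}{N}A)^{-N}v$ (with $N = t/\epsilon$), so that every property of $S(t)$ that is stable under this limit can be deduced from the corresponding property of the resolvent $J_\lambda := (I+\lambda A)^{-1}$, for which we have an explicit fixed-point characterization from Theorem \ref{thm:A_generates}. Thus the first step is to establish each of (i)–(v) at the level of a single resolvent application $J_\lambda$: given $w = J_\lambda v$, i.e. $w + \lambda A w \ni v$, show that (a) $\|w\|_{\mathcal L^p} \le \|w\|_{\mathcal L^1}^{1/p}\|w\|_{\mathcal L^\infty}^{1-1/p}$ is controlled by the analogous quantity for $v$, which I would get by first proving the $\mathcal L^\infty$ bound $\|w\|_{\mathcal L^\infty}\le\|v\|_{\mathcal L^\infty}$ and the $\mathcal L^1$-contraction/mass identity, then interpolating; (b) $-\|v^-\|_{\mathcal L^\infty}\le w\le\|v^+\|_{\mathcal L^\infty}$ via a comparison argument using the entropy inequality \eqref{eq:A0} with suitable constants $c$; (c) the order-contraction $\|(w_1-w_2)^+\|_{\mathcal L^1}\le\|(v_1-v_2)^+\|_{\mathcal L^1}$ where $w_j = J_\lambda v_j$, which is the "one-sided" version of accretivity and follows by the same bracket/duality tool (\cite[Proposition 2.1]{Crandall1972}) used for accretivity in Proposition \ref{prop:A0_accretive}, now applied with the positive-part functional instead of the full norm; (d) equicontinuity, from the fact that $B$ (and hence $A$) commutes with spatial translations $\tau_y$, so $\tau_y w = J_\lambda(\tau_y v)$, combined with the $\mathcal L^1$-contraction; (e) conservation of mass, by integrating the resolvent equation $w - v = -\lambda A w$ over $\mathbb R^n$ and checking $\int_{\mathbb R^n} Bw = 0$, which holds because each summand is a difference quotient $D^{\beta_i}[\,\cdot\,]$ and integrates to zero (this is the integration-by-parts-for-difference-quotients identity, valid once $\phi_i(w,\tau_{\beta_i}w)\in\mathcal L^1$, which is part of $D(A_0)$).

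The second step is to pass these resolvent estimates through the iteration $J_\lambda^N$ and the limit $\epsilon\to0$: each of (i)–(v) is an inequality or equality between continuous (indeed $\mathcal L^1$-Lipschitz or linear) functionals that is preserved under composition of the $J_\lambda$'s and under $\mathcal L^1$-limits, so that $S(t)v$ inherits them on $D(A)\cap\mathcal L^\infty$; a density/closure argument then extends them to $v\in\overline{D(A)}\cap\mathcal L^\infty$. One technical point to watch is that the $\mathcal L^\infty$ and $\mathcal L^p$ bounds involve norms that are only lower semicontinuous under $\mathcal L^1$-convergence, but since they go the right way (the limit norm is $\le\liminf$), the inequalities survive; the mass identity survives because $\mathcal L^1$-convergence preserves integrals.

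For the entropy inequality (vi) the plan is different and this is where the real work lies. Here I would follow Crandall's strategy: starting from the $\epsilon$-approximate solutions $z(t)$ built from the resolvent steps $\tfrac{1}{\epsilon}(z_i - z_{i-1}) + A z_i \ni 0$, each $z_i \in D(A_0)$ satisfies the stationary entropy inequality \eqref{eq:A0} with $w = \tfrac{1}{\epsilon}(z_{i-1}-z_i)$; multiplying by a time-dependent test function, summing over $i$, performing summation-by-parts in time (the discrete analogue of $\int \eta(u)\partial_t f$), and letting $\epsilon\to0$ should produce exactly the space-time inequality of Definition \ref{def:entropy} for $u = S(\cdot)v$. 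The convergence of the nonlinear flux terms $\phi_i(z,\tau_{\beta_i}z)$ is handled because $z(\cdot)\to S(\cdot)v$ in $\mathcal L^1_{loc}$ and, after passing to a subsequence, a.e., while the $\mathcal L^\infty$ bound from (ii) gives the domination needed for dominated convergence; the $\sign_0$ terms require the usual care (one works with the inequality rather than an identity, exploiting that $\sign_0(z-c)$ appears multiplied by quantities that vanish where $z=c$).

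The main obstacle I anticipate is precisely this last passage to the limit in (vi): one must ensure that the discrete entropy inequalities assembled from the implicit Euler steps, which a priori only hold for the piecewise-constant $z$, converge to the continuous entropy inequality without losing the correct sign, and in particular that the "precise" flux combination $\tau_{\beta_i}f\sign_0(\tau_{\beta_i}u-c) - f\sign_0(u-c)$ in Definition \ref{def:entropy} — rather than the cruder $\tilde q_i$ version — is what emerges; this is exactly the precision the remark after Definition \ref{def:entropy} flags as essential. A secondary subtlety is that (i)–(v) are stated for $v\in\overline{D(A)}\cap\mathcal L^\infty$ but the resolvent identities are cleanest on $D(A_0)$, so one must check that $J_\lambda$ maps $\mathcal L^\infty$ into $\mathcal L^\infty$ with the stated bound (which is really the content of the maximum principle at the resolvent level) before iterating, and that the $\mathcal L^\infty$ bound is uniform in the number of steps $N$.
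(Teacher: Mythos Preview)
Your proposal is correct and follows essentially the same approach as the paper: properties (i)--(v) are first established for the resolvent $T_\lambda = (I+\lambda A)^{-1}$ (the paper packages these as Proposition~\ref{prop:Tl}, obtained from the regularized problem, rather than deriving them directly from \eqref{eq:A0} as you sketch, but the content is the same) and then passed through the Crandall--Liggett iteration and limit, while (vi) is obtained exactly as you describe by applying \eqref{eq:A0} at each implicit Euler step, using $\sign_0(z_i-c)(z_{i-1}-z_i)\le|z_{i-1}-c|-|z_i-c|$, doing summation by parts in time, and letting $\epsilon\to0$. The obstacle you flag about the ``precise'' flux combination is in fact not an issue: the expression $D^{\beta_i}[f\sign_0(v-c)]$ already appearing in \eqref{eq:A0} \emph{is} precisely $\tfrac{1}{\|\beta_i\|}\big(\tau_{\beta_i}f\,\sign_0(\tau_{\beta_i}v-c)-f\,\sign_0(v-c)\big)$, so the form in Definition~\ref{def:entropy} emerges directly without any further manipulation.
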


{\color{black}
\begin{rem}
	Note that the properties (iii)-(v) still hold even if $u,v\in\overline{D(A)}$.
\end{rem}
}

\begin{cor}\label{cor:ICP}
	Let	$g:[0,T]\times\overline{D(A)}\mapsto\mathcal{L}^1(\mathbb{R}^n)$ be strongly measurable with respect to $t$ and locally Lipschitz with respect to $u$ such that
	\begin{equation}
		\norm{g(t,u)}_{\mathcal{L}^1(\mathbb{R}^n)}\le c(t)\qty\big(1+\norm{u}_{\mathcal{L}^1(\mathbb{R}^n)})
	\end{equation}
	holds for some $c\in\mathcal{L}^1([0,T])$. Then the Cauchy problem
	\begin{equation}
		\begin{aligned}
			&\pdv{u}{t}+\int_{\mathbb{R}^n}\sum_{i=1}^k\frac{\phi_i(u,\tau_{\beta_i}u)-\phi_i(\tau_{-\beta_i}u,u)}{\norm{\beta_i}_{\mathbb{R}^n}}\omega_i\dd{h}=g(t,u),\qquad&&(x,t)\in\mathbb{R}^n\times(0,T];\\
			&u(x,0)=u_0(x),\qquad&&x\in\mathbb{R}
		\end{aligned}
	\end{equation}
	has a unique mild solution for each $u_0\in\overline{D(A)}$ that depends continuously on $u_0$; that is, the map $u_0(.)\rightarrow u(.,t)$ is continuous in the Banach space $X=\mathcal{L}^1(\mathbb{R}^n)$.
\end{cor}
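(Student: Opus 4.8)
\emph{Strategy.} The plan is to view the perturbed equation as the $m$-accretive operator $A$ of Theorem~\ref{thm:A_generates} perturbed by the Carath\'eodory term $g$, and to obtain the mild solution as the unique fixed point of the associated solution map; this is precisely the abstract situation treated by the perturbation results recalled in \cite{Bothe1999,Bothe2003}, so it suffices to set up the construction and to record the estimates. Write $X=\mathcal{L}^1(\mathbb{R}^n)$ and recall the fundamental comparison estimate for mild solutions of accretive Cauchy problems (see, e.g., \cite{Crandall1980rep,Bothe1999}): if $u_j$ is a mild solution of $u'+Au\ni f_j(t)$ with $u_j(0)=v_j$ on $[0,T]$ and $f_j\in\mathcal{L}^1([0,T],X)$, then $\norm{u_1(t)-u_2(t)}_X\le\norm{v_1-v_2}_X+\int_0^t\norm{f_1(s)-f_2(s)}_X\dd{s}$ for all $t\in[0,T]$.

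\emph{A priori bound.} Since $\phi_i(0,0)=0$ gives $B0=0$, the pair $(0,0)$ belongs to $A_0$, hence to $A$, so the mild solution of $u'+Au\ni0$, $u(0)=0$ is identically $0$. Comparing an arbitrary mild solution $u$ of the perturbed problem with it and using $\norm{g(s,u(s))}_X\le c(s)\qty\big(1+\norm{u(s)}_X)$ gives $\norm{u(t)}_X\le\norm{u_0}_X+\int_0^tc(s)\qty\big(1+\norm{u(s)}_X)\dd{s}$, so Gronwall's lemma yields $\norm{u(t)}_X\le R:=\qty\big(\norm{u_0}_X+\norm{c}_{\mathcal{L}^1([0,T])})e^{\norm{c}_{\mathcal{L}^1([0,T])}}$ on $[0,T]$. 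Thus every mild solution of the perturbed problem (with this $u_0$) remains in the closed ball $K_R:=\qty{v\in X:\norm{v}_X\le R}$.

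\emph{Fixed point.} Put $\tilde g(t,v):=g\qty\big(t,\rho_Rv)$, where $\rho_R\colon X\to K_R$ is the radial retraction; then $\tilde g$ agrees with $g$ on $K_R$, satisfies $\norm{\tilde g(t,v)}_X\le c(t)(1+R)$, and $\tilde g(t,\cdot)$ is globally Lipschitz in $v$ with an $\mathcal{L}^1([0,T])$ modulus $\ell$. For $w\in\mathcal{C}\qty\big([0,T],\overline{D(A)})$ the superposition $t\mapsto\tilde g\qty\big(t,w(t))$ is strongly measurable — here the Carath\'eodory structure and the separability of $X$ enter — and lies in $\mathcal{L}^1([0,T],X)$, so the inhomogeneous Crandall-Liggett theory provides a unique mild solution $\Phi(w)\in\mathcal{C}\qty\big([0,T],\overline{D(A)})$ of $u'+Au\ni\tilde g\qty\big(t,w(t))$, $u(0)=u_0$; thus $\Phi$ is a self-map of the complete metric space $\mathcal{C}\qty\big([0,T],\overline{D(A)})$. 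The fundamental estimate gives $\norm{\Phi(w_1)(t)-\Phi(w_2)(t)}_X\le\int_0^t\ell(s)\norm{w_1(s)-w_2(s)}_X\dd{s}$, so in the equivalent norm $\norm{w}_*:=\sup_{t\in[0,T]}e^{-2\int_0^t\ell(r)\dd{r}}\norm{w(t)}_X$ the map $\Phi$ is a strict contraction; its unique fixed point $u$ is the unique mild solution of $u'+Au\ni\tilde g(t,u)$. By the a priori bound $u(t)\in K_R$, hence $\tilde g\qty\big(t,u(t))=g\qty\big(t,u(t))$ and $u$ is a mild solution of the perturbed Cauchy problem; conversely, any mild solution of the latter lies in $K_R$ by the same a priori bound, hence solves the truncated equation, hence equals $u$. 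Thus the mild solution exists and is unique.

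\emph{Continuous dependence, and the main difficulty.} Given $u_0,\tilde u_0\in\overline{D(A)}$, the two mild solutions lie in a common ball $K_{R'}$ (take $R'$ for the larger initial norm), on which $g$ has an $\mathcal{L}^1([0,T])$ Lipschitz modulus $\ell$; the fundamental estimate gives $\norm{u(t)-\tilde u(t)}_X\le\norm{u_0-\tilde u_0}_X+\int_0^t\ell(s)\norm{u(s)-\tilde u(s)}_X\dd{s}$, and Gronwall's lemma yields $\norm{u(t)-\tilde u(t)}_X\le\norm{u_0-\tilde u_0}_Xe^{\norm{\ell}_{\mathcal{L}^1([0,T])}}$, which is precisely the asserted continuity — indeed local Lipschitz continuity — of $u_0\mapsto u(\cdot,t)$ in $X$. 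I expect the genuine work to be bookkeeping rather than conceptual: establishing strong measurability of the superposition $t\mapsto g\qty\big(t,w(t))$, checking that mild solutions of the inhomogeneous problem remain in $\overline{D(A)}$ so that $\Phi$ is a genuine self-map, and carrying out the passage from the local to the global Lipschitz regime via the a priori bound. All of this is standard within the Carath\'eodory-perturbation theory of \cite{Bothe1999,Bothe2003} once Theorem~\ref{thm:A_generates} has furnished the $m$-accretivity of $A$.
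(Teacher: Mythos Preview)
Your proposal is correct and follows the same route as the paper: the corollary is obtained by applying the Carath\'eodory-perturbation theory of \cite{Bothe2003} to the $m$-accretive operator $A$ furnished by Theorem~\ref{thm:A_generates}. The paper's proof is in fact a single line invoking \cite[Theorem~5.2]{Bothe2003} directly, whereas you have sketched the standard a priori bound/truncation/contraction argument that underlies that theorem; both amount to the same thing.
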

\begin{proof}
	The statement follows directly from \cite[Theorem 5.2]{Bothe2003}.
\end{proof}
\section{Proofs of the main results}\label{sec:proof}
The following lemma shows that $A_0$ is single-valued for bounded functions.
\begin{lemma}\label{lem:A0_single}
	Let $A_0$ be given by Definition \ref{def:A0} and $v\in D(A_0)\cap\mathcal{L}^{\infty}(\mathbb{R}^n)$. Then $A_0$ is single-valued and the equality
	\begin{equation}
		\int_{\mathbb{R}^n}A_0vf\dd{x}=-\int_{\mathbb{R}^n}\int_{\mathbb{R}^n}\sum_{i=1}^kD^{\beta_i}f\phi_i(v,\tau_{\beta_i}v)\omega_i\dd{h}\dd{x}
	\end{equation}
	holds for any nonnegative $f\in\mathcal{C}_0^{\infty}(\mathbb{R}^n)$.
\end{lemma}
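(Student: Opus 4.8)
The statement has two parts: that $A_0$ restricted to bounded functions is single-valued, and the stated integration-by-parts-type identity. The natural strategy is to show first that \emph{any} $w \in A_0 v$ must satisfy the displayed identity; since the right-hand side depends only on $v$ (and the fixed data $\phi_i, \omega_i, \beta_i$), this pins $w$ down uniquely as a distribution tested against $\mathcal{C}_0^\infty(\mathbb{R}^n)$, hence uniquely in $\mathcal{L}^1(\mathbb{R}^n)$, giving single-valuedness for free. So the whole lemma reduces to extracting the identity from the entropy inequality \eqref{eq:A0}.

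The key idea is the classical trick of using the Kru\v zkov inequality with $c$ chosen outside the range of $v$, and in both "directions." Concretely, since $v \in \mathcal{L}^\infty(\mathbb{R}^n)$, pick $c \ge \norm{v}_{\mathcal{L}^\infty(\mathbb{R}^n)}$. Then $\sign_0(v-c) = -1$ a.e.\ on the support of $v$ — one has to be a little careful on the set $\{v = 0\}$ if $c$ could equal $0$, but for $c$ strictly larger than $\norm{v}_\infty$ (and also $> 0$ unless $v \equiv 0$) we get $\sign_0(v - c) \equiv -1$ wherever it matters, and $\sign_0(\tau_{\beta_i} v - c) \equiv -1$ likewise. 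Also $\phi_i(c,c)$ is just a constant. Substituting into \eqref{eq:A0}, using that $D^{\beta_i}[f \cdot(-1)] = -D^{\beta_i} f$, and that the constant $\phi_i(c,c)$ contributes $\int \int \sum_i D^{\beta_i}f \cdot \phi_i(c,c)\,\omega_i$, which vanishes after the $h$-integration is moved outside and a change of variables $h \mapsto -h$ is applied to the $\tau_{\beta_i} f$ piece (here using $\norm{\omega_i}_{\mathcal{L}^1} = 1$ and, in the symmetric case, evenness of $\omega_i$; in the one-sided case this constant term must be shown to drop out by the same change-of-variables bookkeeping that produced \eqref{eq:divergence}). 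After cancellation one is left with
$$
-\int_{\mathbb{R}^n} w f \,\dd x \;+\; \int_{\mathbb{R}^n}\int_{\mathbb{R}^n} \sum_{i=1}^k (-D^{\beta_i} f)\,\phi_i(v, \tau_{\beta_i} v)\,\omega_i \,\dd h\,\dd x \;\ge\; 0,
$$
i.e.\ $\int w f \le -\int\int \sum_i D^{\beta_i} f\,\phi_i(v,\tau_{\beta_i}v)\,\omega_i$. Running the same computation with $c \le -\norm{v}_{\mathcal{L}^\infty(\mathbb{R}^n)}$ gives $\sign_0(v-c) \equiv +1$ and the reverse inequality. Combining the two yields the equality for all nonnegative $f \in \mathcal{C}_0^\infty(\mathbb{R}^n)$.

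Two points require care and are the main obstacles. First, the $\mathcal{L}^1$-integrability needed to justify Fubini and the change of variables: one needs $\phi_i(v, \tau_{\beta_i} v) \in \mathcal{L}^1(\mathbb{R}^n)$ for $h \in \supp(\omega_i)$, which is exactly hypothesis \ref{A0:phi_L1} in Definition \ref{def:A0}, together with $\omega_i \in \mathcal{L}^\infty$ and compact support, so $\omega_i D^{\beta_i} f$ is bounded with compact $h$-support and $f$ smooth with compact $x$-support — this makes the double integral absolutely convergent. Second, the vanishing of the $\phi_i(c,c)$ term: after pulling the $h$-integral outside, $\int_{\mathbb{R}^n}\int_{\mathbb{R}^n} \tau_{\beta_i} f(x)\, \omega_i(\beta_i(h))\,\dd h\,\dd x = \int_{\mathbb{R}^n} f(x)\,\dd x \cdot \int_{\mathbb{R}^n} \omega_i\,\dd h$ by translation invariance of Lebesgue measure, which matches $\int f \,\dd x \cdot \int \omega_i\,\dd h$ coming from the $f(x)$ term, so the difference is zero directly — no evenness is even needed here, only $\omega_i \ge 0$ and integrability. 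I would double-check that the $\sign_0$ factors genuinely collapse to a constant (the only subtlety being the null set $\{v=0\}\cup\{\tau_{\beta_i}v = 0\}$, irrelevant under the integral), and otherwise the argument is a clean "test against extreme $c$" computation. The single-valuedness then follows since two elements of $A_0 v$ would have the same pairing against every nonnegative $f\in\mathcal C_0^\infty$, hence (by taking differences of such $f$, which span a dense enough set) agree in $\mathcal{L}^1(\mathbb{R}^n)$.
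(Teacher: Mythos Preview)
Your proposal is correct and follows essentially the same route as the paper: choose $c=\norm{v}_{\mathcal{L}^\infty(\mathbb{R}^n)}+1$ so that $\sign_0(v-c)\equiv-1$ to obtain one inequality, then $c=-(\norm{v}_{\mathcal{L}^\infty(\mathbb{R}^n)}+1)$ for the reverse, and conclude single-valuedness from the resulting identity. The paper simply writes down the two inequalities without commenting on why the constant $\phi_i(c,c)$ term drops out; your observation that $\int_{\mathbb{R}^n}D^{\beta_i}f\,\dd x=0$ by translation invariance (with Fubini justified since $|D^{\beta_i}f|\le\norm{\nabla f}_{\mathcal{L}^\infty}$ and $\omega_i$ has compact support) is the implicit step, and your worry about the null set $\{v=0\}$ is unnecessary once $c$ is taken strictly larger than $\norm{v}_{\mathcal{L}^\infty}$.
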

\begin{proof}
	Let $w\in A_0v$. Then by \eqref{eq:A0} for any nonnegative $f\in\mathcal{C}_0^{\infty}(\mathbb{R}^n)$ and $c\in\mathbb{R}$ we have
	\begin{equation}
		\int_{\mathbb{R}^n}wf\dd{x}+\int_{\mathbb{R}^n}\int_{\mathbb{R}^n}\sum_{i=1}^kD^{\beta_i}\qty\big[f\sign_0(v-c)]\qty\big(\phi_i(v,\tau_{\beta_i}v)-\phi_i(c,c))\omega_i\dd{h}\dd{x}\ge0,
	\end{equation}
	thus for $c=\norm{v}_{\mathcal{L}^{\infty}(\mathbb{R}^n)}+1$, we have that
	\begin{equation}
		\int_{\mathbb{R}^n}wf\dd{x}\le-\int_{\mathbb{R}^n}\int_{\mathbb{R}^n}\sum_{i=1}^kD^{\beta_i}f\phi_i(v,\tau_{\beta_i}v)\omega_i\dd{h}\dd{x}.
	\end{equation}
	Similarly, letting $c=-(\norm{v}_{\mathcal{L}^{\infty}(\mathbb{R}^n)}+1)$ yields
	\begin{equation}
		\int_{\mathbb{R}^n}wf\dd{x}\ge-\int_{\mathbb{R}^n}\int_{\mathbb{R}^n}\sum_{i=1}^kD^{\beta_i}f\phi_i(v,\tau_{\beta_i}v)\omega_i\dd{h}\dd{x},
	\end{equation}
	showing that for any $w\in A_0v$, the following equality holds
	\begin{equation}
		\int_{\mathbb{R}^n}wf\dd{x}=-\int_{\mathbb{R}^n}\int_{\mathbb{R}^n}\sum_{i=1}^kD^{\beta_i}f\phi_i(v,\tau_{\beta_i}v)\omega_i\dd{h}\dd{x}.
	\end{equation}
	To show that $A_0v$ is single-valued, suppose that $w_1,w_2\in A_0v$. Then the equality $\int_{\mathbb{R}^n}w_1f\dd{x}=\int_{\mathbb{R}^n}w_2f\dd{x}$ holds for all nonnegative $f\in\mathcal{C}_0^{\infty}(\mathbb{R}^n)$, thus $w_1=w_2$ a.e.
\end{proof}

The following lemma shows that $A_0$ extends $B$ on $\mathcal{C}_0^1(\mathbb{R}^n)$.

\begin{lemma}\label{lem:B_in_A0}
	Let $\phi_i\in\mathcal{C}^1(\mathbb{R}\times\mathbb{R})$ and $A_0$ be given by Definition \ref{def:A0}. Then $\mathcal{C}_0^1(\mathbb{R}^n)\subset D(A_0)$ and for any $v\in\mathcal{C}_0^1(\mathbb{R}^n)$, the equality $A_0v=Bv$ holds.
\end{lemma}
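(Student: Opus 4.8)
The plan is to verify that the pair $(v,Bv)$ meets the three requirements of Definition~\ref{def:A0}, so that $Bv\in A_0v$ (and in particular $v\in D(A_0)$), and then to appeal to Lemma~\ref{lem:A0_single} — applicable since $v\in\mathcal{C}_0^1(\mathbb{R}^n)\subset\mathcal{L}^{\infty}(\mathbb{R}^n)$ — to conclude $A_0v=\{Bv\}$. The integrability conditions (i)--(ii) of Definition~\ref{def:A0} are immediate: as $v$ has compact support and the $\omega_i$ have bounded support, $v$ and all shifts $\tau_{\pm\beta_i(h)}v$ with $h\in\supp(\omega_i)$ vanish outside a common bounded set $\Omega$, so (using $\phi_i(0,0)=0$) $\phi_i(v,\tau_{\beta_i(h)}v)$ and $Bv$ are supported in $\Omega$; there they are bounded, because $\phi_i\in\mathcal{C}^1$ is Lipschitz on compact sets and $|D^{\beta_i}w|\le\norm{\nabla w}_{\mathcal{L}^{\infty}(\mathbb{R}^n)}$ for $w\in\mathcal{C}_0^1(\mathbb{R}^n)$, the latter giving $|Bv|\le\sum_iL_i\int_{\mathbb{R}^n}(|D^{\beta_i}\tau_{-\beta_i}v|+|D^{\beta_i}v|)\omega_i\dd{h}$ exactly as in the proof of Lemma~\ref{lem:DB}. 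Thus $\phi_i(v,\tau_{\beta_i(h)}v),\,Bv\in\mathcal{L}^1(\mathbb{R}^n)\cap\mathcal{L}^{\infty}(\mathbb{R}^n)$.

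The substance is the entropy inequality \ref{A0:entropy} with $w=Bv$, which I expect to hold \emph{with equality} — for $\mathcal{C}^1$ data the left-hand side of \eqref{eq:A0} should vanish identically, the discrete counterpart of the fact that smooth solutions satisfy entropy balance as an identity. As in the proof of Lemma~\ref{lem:Bcont}, write $Bv=\int_{\mathbb{R}^n}\sum_{i=1}^kD^{\beta_i}\qty\big[\phi_i(\tau_{-\beta_i}v,v)]\omega_i\dd{h}$ (since $\phi_i(\tau_{-\beta_i}v,v)(x+\beta_i)=\phi_i(v,\tau_{\beta_i}v)(x)$), and fix a nonnegative $f\in\mathcal{C}_0^{\infty}(\mathbb{R}^n)$, $c\in\mathbb{R}$, and $g:=f\sign_0(v-c)$. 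Granting (see below) that the double-integral integrand in \eqref{eq:A0} is integrable on $\mathbb{R}^n\times\mathbb{R}^n$, I would interchange the $x$- and $h$-integrals, apply for each fixed $h$ the integration-by-parts rule for difference quotients (a change of variables, cf.\ \cite[page~295]{Evans2010}) to replace $(D^{\beta_i}g)\qty\big(\phi_i(v,\tau_{\beta_i}v)-\phi_i(c,c))$ by $g\,D^{-\beta_i}\qty\big[\phi_i(v,\tau_{\beta_i}v)-\phi_i(c,c)]=-g\,D^{\beta_i}\qty\big[\phi_i(\tau_{-\beta_i}v,v)]$ — the additive constant $\phi_i(c,c)$ dropping out — then pull $g$ back outside (Fubini again) and recognise $\sum_i\int_{\mathbb{R}^n}D^{\beta_i}\qty\big[\phi_i(\tau_{-\beta_i}v,v)]\omega_i\dd{h}=Bv$. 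This gives
\begin{equation}
	\int_{\mathbb{R}^n}\int_{\mathbb{R}^n}\sum_{i=1}^kD^{\beta_i}\qty\big[f\sign_0(v-c)]\qty\big(\phi_i(v,\tau_{\beta_i}v)-\phi_i(c,c))\omega_i\dd{h}\dd{x}=-\int_{\mathbb{R}^n}f\sign_0(v-c)\,Bv\dd{x},
\end{equation}
so adding the first term $\int_{\mathbb{R}^n}\sign_0(v-c)(Bv)f\dd{x}$ of \eqref{eq:A0} makes its left-hand side equal to $0\ge0$. Hence $Bv\in A_0v$, and Lemma~\ref{lem:A0_single} finishes the proof.

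The step I expect to be the real obstacle is justifying that $(D^{\beta_i}g)\qty\big(\phi_i(v,\tau_{\beta_i}v)-\phi_i(c,c))\omega_i$ is integrable over $\mathbb{R}^n\times\mathbb{R}^n$ — which is also what makes the left-hand side of \eqref{eq:A0} meaningful for $w=Bv$ in the first place — since the hypotheses permit $\int_{\mathbb{R}^n}\frac{\omega_i(\beta_i)}{\norm{\beta_i}_{\mathbb{R}^n}}\dd{h}=\infty$, so that the crude bound $|D^{\beta_i}g|\lesssim1/\norm{\beta_i}_{\mathbb{R}^n}$ is not $\omega_i$-summable. The remedy, replacing the usual vanishing-viscosity bookkeeping (unavailable without flux derivatives), is a dichotomy: where $\sign_0(v-c)$ agrees at $x$ and $x+\beta_i$ one has $|D^{\beta_i}g|\le\norm{\nabla f}_{\mathcal{L}^{\infty}(\mathbb{R}^n)}$; where it disagrees, both $v(x)$ and $v(x+\beta_i)$ lie within $\norm{\nabla v}_{\mathcal{L}^{\infty}(\mathbb{R}^n)}\norm{\beta_i}_{\mathbb{R}^n}$ of $c$ by the mean value bound (here $v\in\mathcal{C}^1$ is crucial), whence $|\phi_i(v,\tau_{\beta_i}v)-\phi_i(c,c)|\le2L_i\norm{\nabla v}_{\mathcal{L}^{\infty}(\mathbb{R}^n)}\norm{\beta_i}_{\mathbb{R}^n}$ and the factor $\norm{\beta_i}_{\mathbb{R}^n}$ cancels the singularity. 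Either way one gets $|(D^{\beta_i}g)(\phi_i(v,\tau_{\beta_i}v)-\phi_i(c,c))|\le C\mathbf{1}_{\Omega}$ uniformly in $h$, hence the required integrability; the second Fubini step is then harmless since $|D^{-\beta_i}\qty\big[\phi_i(v,\tau_{\beta_i}v)-\phi_i(c,c)]|\le2L_i\norm{\nabla v}_{\mathcal{L}^{\infty}(\mathbb{R}^n)}$ uniformly in $h$. Everything else is routine.
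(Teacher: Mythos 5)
Your proof is correct and follows essentially the same route as the paper: verify the integrability conditions, then show that the inequality in Definition \ref{A0:entropy} holds with \emph{equality} for $w=Bv$ via the change-of-variables (integration by parts for difference quotients) identity \eqref{eq:Bv_main_ineq}, and finish with Lemma \ref{lem:A0_single}. The paper states the key identity without comment, whereas you supply the justification for the Fubini/change-of-variables step (the dichotomy bounding $(D^{\beta_i}[f\sign_0(v-c)])(\phi_i(v,\tau_{\beta_i}v)-\phi_i(c,c))$ uniformly in $h$); this is a genuine and welcome addition of detail, not a different method.
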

\begin{proof}
	{\color{black} The fact $v\in\mathcal{C}_0^1(\mathbb{R}^n)$ implies that $\phi_i(v,\tau_{\beta_i(h)}v)\in\mathcal{L}^1(\mathbb{R}^n)$ holds for all $h\in\supp(\omega_i)$ and $i=1,2,\dots,k$. Let $f\in\mathcal{C}_0^{\infty}(\mathbb{R}^n)$ be nonnegative and $c\in\mathbb{R}$. Multiply $Bv$ by $\sign_0(v-c)f$ and integrate over $\mathbb{R}^n$ to find that
	\begin{equation}
		\begin{aligned}\label{eq:Bv_main_ineq}
			\int_{\mathbb{R}^n}\sign_0(v-c)fBv\dd{x}=&-\int_{\mathbb{R}^n}\int_{\mathbb{R}^n}\sum_{i=1}^kD^{\beta_i}[f\sign_0(v-c)]\qty\big(\phi_i(v,\tau_{\beta_i}v)-\phi_i(c,c))\omega_i\dd{h}\dd{x};
		\end{aligned}
	\end{equation}
	that is, we have $v\in D(A_0)$ and $Bv\in A_0v$. This, combined with Lemma \ref{lem:A0_single} implies that $A_0v=Bv$ a.e.}
\end{proof}

We will use an efficient tool of Crandall to prove accretivity, characterized by the following definition and the two subsequent lemmata.

\begin{definition}\cite[Definition 2.1]{Crandall1972}
	For $u:\mathbb{R}^n\mapsto\mathbb{R}$ measurable, let
	\begin{equation}
		\sign(u):=\qty\big{v:\mathbb{R}^n\mapsto\mathbb{R}\big||v|\overset{}{\le}1\text{ a.e. and }vu=|u|\text{ a.e.}}.
	\end{equation}
\end{definition}

Note that $\sign_0(u)\in\sign(u)$, thus $\sign(u)$ is always nonempty.

\begin{lemma}\label{lem:sign_acc}\cite[Lemma 2.1]{Crandall1972}
	Let $u,v\in\mathcal{L}^1(\mathbb{R}^n)$ and $\alpha\in\sign(u)$. If $\int_{\mathbb{R}^n}\alpha v\dd{x}\ge0$, then $\norm{u+\lambda v}_{\mathcal{L}^1(\mathbb{R}^n)}\ge\norm{u}_{\mathcal{L}^1(\mathbb{R}^n)}$ holds for $\lambda>0$.
\end{lemma}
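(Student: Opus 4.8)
The statement is exactly \cite[Lemma 2.1]{Crandall1972}, but it admits a short self-contained proof, which is the one I would give: interpret $\alpha$ as a norming functional for $u$ in $\mathcal{L}^1(\mathbb{R}^n)$ and apply the resulting subgradient-type inequality to $u+\lambda v$.

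First I would record the elementary consequence of the definition of $\sign(u)$: since $|\alpha|\le 1$ a.e., for every $w\in\mathcal{L}^1(\mathbb{R}^n)$ the product $\alpha w$ is integrable (as $|\alpha w|\le|w|$) and
\[
	\int_{\mathbb{R}^n}\alpha w\dd{x}\le\int_{\mathbb{R}^n}|\alpha|\,|w|\dd{x}\le\int_{\mathbb{R}^n}|w|\dd{x}=\norm{w}_{\mathcal{L}^1(\mathbb{R}^n)}.
\]
In particular $\alpha u$ and $\alpha v$ are both in $\mathcal{L}^1(\mathbb{R}^n)$, so linearity of the integral may be used freely below.

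Next I would apply this bound with $w=u+\lambda v$ and split the integral:
\[
	\norm{u+\lambda v}_{\mathcal{L}^1(\mathbb{R}^n)}\ge\int_{\mathbb{R}^n}\alpha\,(u+\lambda v)\dd{x}=\int_{\mathbb{R}^n}\alpha u\dd{x}+\lambda\int_{\mathbb{R}^n}\alpha v\dd{x}.
\]
The defining relation $\alpha u=|u|$ a.e. turns the first term into $\norm{u}_{\mathcal{L}^1(\mathbb{R}^n)}$, while the hypotheses $\int_{\mathbb{R}^n}\alpha v\dd{x}\ge0$ and $\lambda>0$ make the second term nonnegative. Combining these gives $\norm{u+\lambda v}_{\mathcal{L}^1(\mathbb{R}^n)}\ge\norm{u}_{\mathcal{L}^1(\mathbb{R}^n)}$, as claimed.

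\textbf{Main obstacle.} There is essentially none: the only point needing a word of care is that all the integrals occurring are finite, which is immediate from $|\alpha|\le1$ together with $u,v\in\mathcal{L}^1(\mathbb{R}^n)$. Conceptually this is just the observation that any $\alpha\in\sign(u)$ lies in the subdifferential of the $\mathcal{L}^1$-norm at $u$, and the inequality is the corresponding subgradient inequality; this is precisely the mechanism by which accretivity of $A_0$ will later be deduced from the entropy inequality in Definition \ref{A0:entropy}.
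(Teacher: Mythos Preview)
Your proof is correct and is the standard argument: $\alpha\in\sign(u)$ acts as a norming functional for $u$ in $\mathcal{L}^1(\mathbb{R}^n)$, and the subgradient inequality $\norm{w}_{\mathcal{L}^1}\ge\int\alpha w$ applied to $w=u+\lambda v$ immediately yields the claim. The paper itself does not give a proof but simply cites \cite[Lemma 2.1]{Crandall1972}, so there is nothing to compare against; your self-contained argument is exactly what one would expect and matches the reasoning in Crandall's original.
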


\begin{lemma}\label{lem:sign_sequence}\cite[Lemma 2.2]{Crandall1972}
	Let $\qty{\alpha_k},\qty{\beta_k}$ be sequences in $\mathcal{L}^1(\mathbb{R}^n)$ with $\lim\beta_k=\beta$. If $\alpha_k\in\sign(\beta_k)$, then there exists a subsequence $\qty{\alpha_{k_l}}$ and function $\alpha\in\sign(\beta)$ such that $\qty{\alpha_{k_l}}$ converges to $\alpha$ in the weak-star topology on $\mathcal{L}^{\infty}(\mathbb{R}^n)$.
\end{lemma}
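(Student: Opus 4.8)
The plan is a routine weak-$*$ compactness argument together with one splitting trick to pass a product to the limit. First I would record that $\alpha_k\in\sign(\beta_k)$ forces $\norm{\alpha_k}_{\mathcal{L}^{\infty}(\mathbb{R}^n)}\le1$ for every $k$, so $\qty{\alpha_k}$ is a bounded sequence in $\mathcal{L}^{\infty}(\mathbb{R}^n)$, which is the dual of $\mathcal{L}^1(\mathbb{R}^n)$. Since $\mathcal{L}^1(\mathbb{R}^n)$ is separable, bounded subsets of its dual are weak-$*$ sequentially compact (Banach--Alaoglu, together with metrizability of bounded sets in the weak-$*$ topology of a separable predual), so along a subsequence $\alpha_{k_l}\overset{\ast}{\rightharpoonup}\alpha$ for some $\alpha\in\mathcal{L}^{\infty}(\mathbb{R}^n)$. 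It then remains to verify the two defining properties $|\alpha|\le1$ a.e.\ and $\alpha\beta=|\beta|$ a.e.

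The bound $|\alpha|\le1$ a.e.\ is immediate: for a measurable set $E\subset\mathbb{R}^n$ of finite measure, testing the weak-$*$ convergence against $\chi_E\in\mathcal{L}^1(\mathbb{R}^n)$ gives $\abs{\int_E\alpha\dd{x}}=\lim_l\abs{\int_E\alpha_{k_l}\dd{x}}\le|E|$, and since $E$ was arbitrary this forces $|\alpha|\le1$ a.e.\ (equivalently, the closed unit ball of $\mathcal{L}^{\infty}(\mathbb{R}^n)$ is weak-$*$ closed).

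For $\alpha\beta=|\beta|$ a.e.\ I would use that $\beta_{k_l}\to\beta$ in the \emph{strong} topology of $\mathcal{L}^1(\mathbb{R}^n)$, not merely weakly. Fix $\psi\in\mathcal{L}^{\infty}(\mathbb{R}^n)$. From $\big| |\beta_{k_l}|-|\beta| \big|\le|\beta_{k_l}-\beta|$ we get $|\beta_{k_l}|\to|\beta|$ in $\mathcal{L}^1(\mathbb{R}^n)$, hence $\int_{\mathbb{R}^n}\psi|\beta_{k_l}|\dd{x}\to\int_{\mathbb{R}^n}\psi|\beta|\dd{x}$. On the other hand, $\alpha_{k_l}\beta_{k_l}=|\beta_{k_l}|$ and inserting $\beta$ yields $\int_{\mathbb{R}^n}\psi|\beta_{k_l}|\dd{x}=\int_{\mathbb{R}^n}\psi\alpha_{k_l}\beta\dd{x}+\int_{\mathbb{R}^n}\psi\alpha_{k_l}(\beta_{k_l}-\beta)\dd{x}$, where the last term is bounded in absolute value by $\norm{\psi}_{\mathcal{L}^{\infty}(\mathbb{R}^n)}\norm{\beta_{k_l}-\beta}_{\mathcal{L}^1(\mathbb{R}^n)}\to0$, while $\psi\beta\in\mathcal{L}^1(\mathbb{R}^n)$ and $\alpha_{k_l}\overset{\ast}{\rightharpoonup}\alpha$ give $\int_{\mathbb{R}^n}\psi\alpha_{k_l}\beta\dd{x}\to\int_{\mathbb{R}^n}\psi\alpha\beta\dd{x}$. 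Comparing the two evaluations of $\lim_l\int_{\mathbb{R}^n}\psi|\beta_{k_l}|\dd{x}$ gives $\int_{\mathbb{R}^n}\psi|\beta|\dd{x}=\int_{\mathbb{R}^n}\psi\alpha\beta\dd{x}$ for every $\psi\in\mathcal{L}^{\infty}(\mathbb{R}^n)$; since $\alpha\beta\in\mathcal{L}^1(\mathbb{R}^n)$ (as $|\alpha\beta|\le|\beta|$) and $|\beta|\in\mathcal{L}^1(\mathbb{R}^n)$, taking $\psi=\chi_E$ over all finite-measure $E$ forces $\alpha\beta=|\beta|$ a.e. Thus $\alpha\in\sign(\beta)$, which completes the argument.

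The only genuinely delicate step is the last one: weak-$*$ convergence does not in general survive multiplication, so the proof must exploit the \emph{strong} $\mathcal{L}^1$ convergence of $\beta_{k_l}$ in order to reduce matters to testing the weak-$*$ convergence of $\alpha_{k_l}$ against the \emph{fixed} $\mathcal{L}^1$ function $\psi\beta$. The extraction of a weak-$*$ convergent subsequence (via separability of $\mathcal{L}^1(\mathbb{R}^n)$) and the preservation of the sup-norm bound under weak-$*$ limits are both standard.
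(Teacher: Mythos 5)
Your proof is correct. The paper does not prove this lemma at all---it is quoted verbatim from \cite[Lemma 2.2]{Crandall1972}---and your argument (Banach--Alaoglu plus separability of $\mathcal{L}^1(\mathbb{R}^n)$ for the subsequence extraction, weak-$*$ closedness of the unit ball for $|\alpha|\le1$, and the splitting $\alpha_{k_l}\beta_{k_l}=\alpha_{k_l}\beta+\alpha_{k_l}(\beta_{k_l}-\beta)$ exploiting the strong $\mathcal{L}^1$ convergence of $\beta_{k_l}$ to identify $\alpha\beta=|\beta|$) is exactly the standard one underlying Crandall's original proof. Nothing is missing.
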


\begin{prop}\label{prop:A0_accretive}
	Let $A_0$ be given by Definition \ref{def:A0}. Then $A_0$ is accretive in $\mathcal{L}^1(\mathbb{R}^n)$.
\end{prop}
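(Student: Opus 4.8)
The plan is to verify the defining inequality of accretivity, namely that for $v_1,v_2\in D(A_0)$ with $w_j\in A_0v_j$ and $\lambda>0$ we have $\norm{(v_1+\lambda w_1)-(v_2+\lambda w_2)}_{\mathcal{L}^1(\mathbb{R}^n)}\ge\norm{v_1-v_2}_{\mathcal{L}^1(\mathbb{R}^n)}$. By Lemma \ref{lem:sign_acc} (Crandall's tool) it suffices to produce some $\alpha\in\sign(v_1-v_2)$ with
\[
	\int_{\mathbb{R}^n}\alpha\,(w_1-w_2)\dd{x}\ge0.
\]
The natural candidate is $\alpha=\sign_0(v_1-v_2)$, but $\sign_0$ is not smooth, so the idea is to approximate: first derive an inequality for test-function weights $f\,\sign_0(v_j-c)$ from Definition \ref{A0:entropy}, then let $c$ play the role of the ``other'' solution by a doubling-of-variables–type argument, and finally pass to the limit through a mollification $\alpha_\delta$ of $\sign_0$, using Lemma \ref{lem:sign_sequence} to extract a weak-$\ast$ limit lying in $\sign(v_1-v_2)$.

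Concretely, I would proceed as follows. Write down \eqref{eq:A0} for $v=v_1$, $w=w_1$ with constant $c$ and test function $f$, and the analogous inequality for $v=v_2$, $w=w_2$; the goal is to add them after choosing $c$ cleverly. Since $\sign_0(v_1-c)$ multiplied against $w_1$ should become $\sign_0(v_1-v_2)$ when $c$ ``is'' $v_2$, the standard move is to replace the constant $c$ by $v_2(x)$ in the first inequality and by $v_1(x)$ in the second, which is legitimate only after regularizing: approximate $a\mapsto\sign_0(a-c)$ by smooth nondecreasing $\alpha_\delta$ with $\alpha_\delta\to\sign_0$, approximate the indicator-type weights, and take $f\uparrow 1$ on larger and larger balls using that all the integrands are in $\mathcal{L}^1$ (which is where hypotheses (i) and \ref{A0:phi_L1} are used, together with the finite support of $\omega_i$ and the local Lipschitz bound giving $\phi_i(v_1,\tau_{\beta_i}v_1)-\phi_i(v_2,\tau_{\beta_i}v_2)\in\mathcal{L}^1$). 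The double-integral terms of the form
\[
	\int_{\mathbb{R}^n}\int_{\mathbb{R}^n}\sum_{i=1}^k D^{\beta_i}\!\big[\alpha_\delta(v_1-v_2)\big]\big(\phi_i(v_1,\tau_{\beta_i}v_1)-\phi_i(v_2,\tau_{\beta_i}v_2)\big)\omega_i\dd{h}\dd{x}
\]
must be shown to have a sign (nonpositive, so that it drops on the correct side): this is exactly the discrete ``integration by parts for difference quotients'' together with the monotonicity of each $\phi_i$ in each argument, i.e. the nonlocal analogue of $\operatorname{sign}'(a-b)\cdot\partial_x[\ldots]\ge 0$. After the limit $\delta\to0$, Lemma \ref{lem:sign_sequence} supplies $\alpha\in\sign(v_1-v_2)$ with $\int_{\mathbb{R}^n}\alpha(w_1-w_2)\dd{x}\ge0$, and Lemma \ref{lem:sign_acc} finishes the proof.

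The main obstacle is the sign analysis of the nonlocal difference-quotient term: unlike the local case there is no flux derivative to integrate by parts against, so one must exploit the precise structure of \eqref{eq:A0} — in particular the fact (stressed in the Remark following Definition \ref{def:entropy}) that the entropy flux is built from $\phi_i(v,\tau_{\beta_i}v)-\phi_i(c,c)$ rather than from $\tilde q_i$ — and combine the monotonicity of $\phi_i$ in both slots with the change of variables $h\mapsto -h$ (legitimate by the symmetry/positivity assumptions on $\supp\omega_i$ and evenness of $\omega_i$ in case (1)) to pair up terms so that each contributes with a definite sign. A secondary technical point is justifying the substitution $c\rightsquigarrow v_2(x)$: strictly, one keeps $c$ as an integration variable, integrates the two inequalities against a suitable measure, and uses a Crandall–Kružkov doubling argument; the regularization $\alpha_\delta$ and the cutoff $f$ must be dismantled in the right order so that no term is lost and the limiting weight indeed belongs to $\sign(v_1-v_2)$.
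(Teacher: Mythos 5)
Your plan is correct and follows essentially the same route as the paper's proof: Crandall's Lemmata \ref{lem:sign_acc} and \ref{lem:sign_sequence}, a Kru\v{z}kov doubling-of-variables with a test function concentrating on the diagonal (rather than a direct mollification of $\sign_0$, which you correctly flag as the rigorous version), the sign analysis of the $D^{\beta_i}[\sign_0]$-type term via the monotonicity of $\phi_i$ (this is precisely inequality \eqref{eq:Bv_ineq2}), and the cutoff $f_l\uparrow1$ to kill the remaining difference-quotient term. No substantive differences.
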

\begin{proof}
	Let $v\in D(A_0)$ and $w\in A_0v$ and choose $u\in\mathcal{L}^1(\mathbb{R}^n)$ such that Definition \ref{def:A0} (ii) holds. Set $c=u(y)$ and $f(x)=g(x,y)$ in \eqref{eq:A0}, where $g\in\mathcal{C}_0^{\infty}(\mathbb{R}^n\times\mathbb{R}^n)$ is nonnegative. We introduce the notations
	\begin{align}
		&D_1^{\beta_i}g(x,y)=\frac{g(x+\beta_i,y)-g(x,y)}{\norm{\beta_i}_{\mathbb{R}^n}},\\
		&D_2^{\beta_i}g(x,y)=\frac{g(x,y+\beta_i)-g(x,y)}{\norm{\beta_i}_{\mathbb{R}^n}}.
	\end{align}
	{\color{black}
	Integrating over $y$ yields
	\begin{equation}\label{eq:acc:ineq1}
		\begin{aligned}
			&\int_{\mathbb{R}^n}\int_{\mathbb{R}^n}\sign_0\qty\big(v(x)-u(y))w(x)g(x,y)\dd{x}\dd{y}\\
			&+\int_{\mathbb{R}^n}\int_{\mathbb{R}^n}\int_{\mathbb{R}^n}\sum_{i=1}^kD_1^{\beta_i}\qty\big[g(x,y)\sign_0\qty\big(v(x)-u(y))]\qty\Big(\phi_i\qty\big(v(x),v(x+\beta_i))-\phi_i\qty\big(u(y),u(y)))\omega_i\dd{h}\dd{x}\dd{y}\ge0.
		\end{aligned}
	\end{equation}
	Suppose that $u\in D(A_0)$ as well and let $z\in A_0u$. Set $c=v(x)$ and $f(y)=g(x,y)$ in \eqref{eq:A0} and integrate over $x$ to find that
	\begin{equation}\label{eq:acc:ineq2}
		\begin{aligned}
			&\int_{\mathbb{R}^n}\int_{\mathbb{R}^n}\sign_0\qty\big(u(y)-v(x))z(y)g(x,y)\dd{y}\dd{x}\\
			&+\int_{\mathbb{R}^n}\int_{\mathbb{R}^n}\int_{\mathbb{R}^n}\sum_{i=1}^kD_2^{\beta_i}\qty\big[g(x,y)\sign_0\qty\big(u(y)-v(x))]\qty\Big(\phi_i\qty\big(u(y),u(y+\beta_i))-\phi_i\qty\big(v(x),v(x)))\omega_i\dd{h}\dd{y}\dd{x}\ge0.
		\end{aligned}
	\end{equation}
	and adding the inequalities \eqref{eq:acc:ineq1} and \eqref{eq:acc:ineq2} yields
	\begin{equation}
		\begin{aligned}\label{eq:acc_proof:inter}
			&\int_{\mathbb{R}^n}\int_{\mathbb{R}^n}\sign_0\qty\big(v(x)-u(y))\qty\big(w(x)-z(y))g(x,y)\dd{x}\dd{y}\\
			&+\int_{\mathbb{R}^n}\int_{\mathbb{R}^n}\int_{\mathbb{R}^n}\sum_{i=1}^k\Bigg(D_1^{\beta_i}\qty\big[g(x,y)\sign_0\qty\big(v(x)-u(y))]\qty\Big(\phi_i\qty\big(v(x),v(x+\beta_i))-\phi_i\qty\big(u(y),u(y)))\\
			&+D_2^{\beta_i}\qty\big[g(x,y)\sign_0\qty\big(u(y)-v(x))]\qty\Big(\phi_i\qty\big(u(y),u(y+\beta_i))-\phi_i\qty\big(v(x),v(x)))\Bigg)\omega_i\dd{h}\dd{x}\dd{y}\ge0.
		\end{aligned}
	\end{equation}
	Let $\delta\in\mathcal{C}_0^{\infty}(\mathbb{R})$ be nonnegative and even such that $\norm{\delta}_{\mathcal{L}^1(\mathbb{R}^n)}=1$ and
	\begin{align}
		&\lambda(x)=\prod_{i=1}^n\delta(x_i),\label{eq:moll}\\
		&\lambda_{\epsilon}(x)=\frac{1}{\epsilon^n}\lambda\qty\bigg(\frac{x}{\epsilon})
	\end{align}
	for $\epsilon>0$. Let $f\in\mathcal{C}_0^{\infty}(\mathbb{R}^n)$ nonnegative and set
	\begin{equation}
		g(x,y)=f\qty\bigg(\frac{x+y}{2})\lambda_{\epsilon}\qty\bigg(\frac{x-y}{2}).
	\end{equation}
	Setting $2\xi=x+y$, $2\eta=x-y$ in \eqref{eq:acc_proof:inter} yields
	\begin{align}\label{eq:acc_proof:full}
		&\int_{\mathbb{R}^n}\qty\Bigg(\int_{\mathbb{R}^n}\sign_0\qty\big(v(\xi+\eta)-u(\xi-\eta))\qty\big(w(\xi+\eta)-z(\xi-\eta))f(\xi)\dd{\xi})\lambda_{\epsilon}(\eta)\dd{\eta}+\int_{\mathbb{R}^n}\int_{\mathbb{R}^n}\int_{\mathbb{R}^n}\sum_{i=1}^k\frac{\omega_i}{\norm{\beta_i}_{\mathbb{R}^n}}\\
		&\Bigg[\qty\Bigg(f\qty\bigg(\xi+\frac{\beta_i}{2})\lambda_{\epsilon}\qty\bigg(\eta+\frac{\beta_i}{2})\sign_0\qty\big(v(\xi+\eta+\beta_i)-u(\xi-\eta))-f(\xi)\lambda_{\epsilon}(\eta)\sign_0(v(\xi+\eta)-u(\xi-\eta)))\\
		&\times\qty\Big(\phi_i\qty\big(v(\xi+\eta),v(\xi+\eta+\beta_i))-\phi_i\qty\big(u(\xi-\eta),u(\xi-\eta)))\\
		&+\qty\Bigg(f\qty\bigg(\xi+\frac{\beta_i}{2})\lambda_{\epsilon}\qty\bigg(\eta-\frac{\beta_i}{2})\sign_0\qty\big(u(\xi-\eta+\beta_i)-v(\xi+\eta))-f(\xi)\lambda_{\epsilon}(\eta)\sign_0\qty\big(u(\xi-\eta)-v(\xi+\eta)))\\
		&\times\qty\Big(\phi_i\qty\big(u(\xi-\eta),u(\xi-\eta+\beta_i))-\phi_i\qty\big(v(\xi+\eta),v(\xi+\eta)))\Bigg]\dd{h}\dd{\xi}\dd{\eta}\ge0.
	\end{align}
	Denote the integral in parenthesis in the first term of $\eqref{eq:acc_proof:full}$ with $I_f(\eta)$ and the whole second term by $J_f(\eta)$. We want to let $\epsilon\rightarrow0$. Since $I_f$ is bounded and $\norm{\lambda_{\epsilon}}_{\mathcal{L}^1(\mathbb{R}^n)}=1$ we have that
	\begin{equation}
		\liminf_{\epsilon\rightarrow0}\int_{\mathbb{R}^n}I_f(\eta)\lambda_{\epsilon}(\eta)\dd{\eta}\le\limsup_{\norm{\eta}_{\mathbb{R}^n}\rightarrow0}I_f(\eta).
	\end{equation}
	A similar argument after a change of variables shows that
	\begin{equation}
		\begin{aligned}
			&\liminf_{\epsilon\rightarrow0}J_f(\eta)\le\limsup_{\norm{\eta}_{\mathbb{R}^n}\rightarrow0}\int_{\mathbb{R}^n}\int_{\mathbb{R}^n}\sum_{i=1}^k\frac{\omega_i}{\norm{\beta_i}_{\mathbb{R}^n}}\\
			&\times\Bigg[f(\xi+\beta_i)q_i^{(1)}\qty\big(v(\xi+\eta),v(\xi+\eta+\beta_i),u(\xi-\eta+\beta_i))-f(\xi)q_i^{(2)}\qty\big(v(\xi+\eta),v(\xi+\eta+\beta_i),u(\xi-\eta))\\
			&+f(\xi+\beta_i)q_i^{(1)}\qty\big(u(\xi-\eta),u(\xi-\eta+\beta_i),v(\xi+\eta+\beta_i))-f(\xi)q_i^{(2)}\qty\big(u(\xi-\eta),u(\xi-\eta+\beta_i),v(\xi+\eta))\Bigg]\dd{h}\dd{\xi},
		\end{aligned}
	\end{equation}
	where
	\begin{equation}
		\begin{aligned}
			q_i^{(1)}(a,b,c)&=\sign_0(a-c)\qty\big(\phi_i(a,b)-\phi_i(c,c)),\\
			q_i^{(2)}(a,b,c)&=\sign_0(b-c)\qty\big(\phi_i(a,b)-\phi_i(c,c)).
		\end{aligned}
	\end{equation}
	Introducing mixed terms yields
	\begin{align}
		&\liminf_{\epsilon\rightarrow0}J_f(\eta)\le\limsup_{\norm{\eta}_{\mathbb{R}^n}\rightarrow0}\int_{\mathbb{R}^n}\int_{\mathbb{R}^n}\sum_{i=1}^k\frac{\omega_i}{\norm{\beta_i}_{\mathbb{R}^n}}\\
		&\times\Bigg[\qty\big(f(\xi+\beta_i)-f(\xi))q_i^{(1)}\qty\big(v(\xi+\eta),v(\xi+\eta+\beta_i),u(\xi-\eta+\beta_i))\\
		&+\qty\big(f(\xi+\beta_i)-f(\xi))q_i^{(1)}\qty\big(u(\xi-\eta),u(\xi-\eta+\beta_i),v(\xi+\eta+\beta_i))\\
		&f(\xi)\qty\Big(q_i^{(1)}\qty\big(v(\xi+\eta),v(\xi+\eta+\beta_i),u(\xi-\eta))-q_i^{(2)}\qty\big(v(\xi+\eta),v(\xi+\eta+\beta_i),u(\xi-\eta)))\\
		&f(\xi)\qty\Big(q_i^{(1)}\qty\big(u(\xi-\eta),u(\xi-\eta+\beta_i),v(\xi+\eta))-q_i^{(2)}\qty\big(u(\xi-\eta),u(\xi-\eta+\beta_i),v(\xi+\eta)))\Bigg]\dd{h}\dd{\xi}.
	\end{align}
	But then \eqref{eq:Bv_ineq2} shows that the last two terms are nonpositive, thus we conclude that
	\begin{equation}
		\begin{aligned}
			&\liminf_{\epsilon\rightarrow0}J_f(\eta)\le\limsup_{\norm{\eta}_{\mathbb{R}^n}\rightarrow0}\int_{\mathbb{R}^n}\int_{\mathbb{R}^n}\sum_{i=1}^k\frac{\omega_i}{\norm{\beta_i}_{\mathbb{R}^n}}\\
			&\times\Bigg[\qty\big(f(\xi+\beta_i)-f(\xi))q_i^{(1)}\qty\big(v(\xi+\eta),v(\xi+\eta+\beta_i),u(\xi-\eta+\beta_i))\\
			&+\qty\big(f(\xi+\beta_i)-f(\xi))q_i^{(1)}\qty\big(u(\xi-\eta),u(\xi-\eta+\beta_i),v(\xi+\eta+\beta_i))\Bigg]\dd{h}\dd{\xi}=:\limsup_{\norm{\eta}_{\mathbb{R}^n}\rightarrow0}\tilde J_f(\eta).
		\end{aligned}
	\end{equation}

	Choose a sequence $\qty{\eta_k}\subset\mathbb{R}^n$ such that $\norm{\eta_k}_{\mathbb{R}^n}\rightarrow0$ and $\lim_{k\rightarrow\infty}I_f(\eta_k)=\limsup_{\norm{\eta}_{\mathbb{R}^n}}\tilde I_f(\eta)$ and $\lim_{k\rightarrow\infty}\tilde J_f(\eta_k))=\limsup_{\norm{\eta}_{\mathbb{R}^n}\rightarrow0}\tilde J_f(\eta)$ (note that it might be necessary to choose two different sequences for $I_f$ and $\tilde J_f$). Using Lemma \ref{lem:sign_sequence} we assume (passing to subsequences if necessary) that the sequence
	\begin{equation}
		\alpha_k(\xi)=\sign_0\qty\big(v(\xi+\eta_k)-u(\xi-\eta_k))
	\end{equation}
	converges weakly-star in $\mathcal{L}^{\infty}(\mathbb{R}^n)$ to $\alpha\in\sign\qty\big(v(\xi)-u(\xi))$. We similarly assume that the $\sign_0$ sequences appearing in $\tilde J_f(\eta_k)$ converge weakly-star in $\mathcal{L}^{\infty}(\mathbb{R}^n)$ and we denote the limit as
	\begin{equation}
		\lim_{k\rightarrow\infty}\tilde J_f(\eta_k)=:\int_{\mathbb{R}^n}\int_{\mathbb{R}^n}\sum_{i=1}^kD^{\beta_i}f\qty\Big(\gamma_i\qty\big(v(\xi),v(\xi+\beta_i),u(\xi+\beta_i))+\gamma_i\qty\big(u(\xi),u(\xi+\beta_i),v(\xi+\beta_i)))\omega_i\dd{h}\dd{\xi}.
	\end{equation}
	Then
	\begin{equation}\label{eq:lim_gamma}
		\begin{aligned}
			&\lim_{k\rightarrow\infty}\qty\big(I_f(\eta_k)+\tilde J_f(\eta_k))=\int_{\mathbb{R}^n}\alpha(w-z)f\dd{\xi}\\
			&+\int_{\mathbb{R}^n}\int_{\mathbb{R}^n}\sum_{i=1}^kD^{\beta_i}f\qty\Big(\gamma_i\qty\big(v(\xi),v(\xi+\beta_i),u(\xi+\beta_i))+\gamma_i\qty\big(u(\xi),u(\xi+\beta_i),v(\xi+\beta_i)))\omega_i\dd{h}\dd{\xi}\ge0.
		\end{aligned}
	\end{equation}
	Let $\kappa\in\mathcal{C}_0^{\infty}(\mathbb{R})$ be nonnegative such that $\kappa(s)=1$ for $|s|\le1$. Set $f_l(\xi)=\kappa\qty\Big(\frac{\norm{\xi}_{\mathbb{R}^n}}{l})$ and let $l\rightarrow\infty$. Since the difference quotient
	\begin{equation}
		D^{\beta_i}f_l(x)=\int_0^1\nabla f_l(x+\beta_i s)\cdot\frac{\beta_i}{\norm{\beta_i}_{\mathbb{R}^n}}\dd{s}\label{eq:Dbif}
	\end{equation}
	is bounded and is zero for $x\in\mathbb{R}^n$ such that $\norm{x\pm\beta_i}_{\mathbb{R}^n}\le l$, the second integral in \eqref{eq:lim_gamma} converges to zero; that is, we conclude that
	\begin{equation}
		\int_{\mathbb{R}^n}\alpha(w-z)\dd{\xi}\ge0.
	\end{equation}
	Lemma \ref{lem:sign_acc} shows that the inequality
	\begin{equation}
		\norm{v-u+\lambda(w-z)}_{\mathcal{L}^1(\mathbb{R}^n)}\ge\norm{v-u}_{\mathcal{L}^1(\mathbb{R}^n)}
	\end{equation}
	holds for $\lambda>0$. Since $u,v\in D(A_0)$ were arbitrary we conclude that $A_0$ is indeed accretive.}
\end{proof}

{\color{black}
\begin{rem}
	One can observe that in the above proof we did not use the fact that the kernels $\omega_i$ have finite support.
\end{rem}
}

The stationary equation \eqref{eq:very_regul} will be investigated through the regularized equation
\begin{equation}\label{eq:regul}
	u+\lambda Bu-\epsilon\Delta u=g,
\end{equation}
where $\lambda,\epsilon>0$. {\color{black}In \cite[Proposition 2.2]{Crandall1972} the author shows existence of solutions using a special version of the perturbation result \cite[Theorem 3.2]{Kobayashi1977} without further preparations. A key step of the proof is the fact that for $u\in\mathcal{L}^2(\mathbb{R}^n)$, the $\tilde B$ local version of the operator $B$ (see \eqref{eq:local}) has the property $\langle\tilde Bu,u\rangle=0$. However, this is no longer true in the nonlocal case, and thus we instead use a fix-point approach based on \cite[Chapter 4]{Lions1982} and \cite[Proposition IV.3]{Crandall1983}. In order to do so, we first establish some a priori estimates on the solutions.}

\begin{lemma}\label{lem:u_Lp}
	Let $\phi_i\in\mathcal{C}^1(\mathbb{R}\times\mathbb{R})$ have bounded partial derivatives and $u\in\mathcal{H}^2(\mathbb{R}^n)$ satisfy $\eqref{eq:regul}$ for $g\in\mathcal{L}^1(\mathbb{R}^n)\cap\mathcal{L}^{\infty}(\mathbb{R}^n)$. Then $u\in\mathcal{L}^1(\mathbb{R}^n)\cap\mathcal{L}^{\infty}(\mathbb{R}^n)$ and
	\begin{equation}
		\begin{aligned}
			&\norm{u}_{\mathcal{L}^1(\mathbb{R}^n)}\le\norm{g}_{\mathcal{L}^1(\mathbb{R}^n)},\\
			&\norm{u}_{\mathcal{L}^{\infty}(\mathbb{R}^n)}\le\norm{g}_{\mathcal{L}^{\infty}(\mathbb{R}^n)}.
		\end{aligned}
	\end{equation}
\end{lemma}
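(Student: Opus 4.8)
The plan is to derive the two a priori bounds by testing the regularized equation \eqref{eq:regul} against carefully chosen functions and exploiting the sign structure of $B$. For the $\mathcal{L}^\infty$ bound, let $M = \norm{g}_{\mathcal{L}^\infty(\mathbb{R}^n)}$ and consider the function $(u-M)^+$. Since $u\in\mathcal{H}^2(\mathbb{R}^n)$, this is a legitimate test function (in $\mathcal{H}^1$, with compact-enough decay after a cutoff argument if needed). Multiplying \eqref{eq:regul} by $(u-M)^+$ and integrating gives
\begin{equation}
  \int_{\mathbb{R}^n} u\,(u-M)^+\dd{x} + \lambda\int_{\mathbb{R}^n} Bu\,(u-M)^+\dd{x} + \epsilon\int_{\mathbb{R}^n}\nabla u\cdot\nabla(u-M)^+\dd{x} = \int_{\mathbb{R}^n} g\,(u-M)^+\dd{x}.
\end{equation}
The diffusion term equals $\epsilon\norm{\nabla(u-M)^+}_{\mathcal{L}^2(\mathbb{R}^n)}^2\ge 0$. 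The key point is that the $B$-term is nonnegative: writing $Bu = \int\sum_i D^{\beta_i}[\phi_i(\tau_{-\beta_i}u,u)]\omega_i\dd h$ and using integration by parts for difference quotients (moving the difference quotient onto the test function), one gets
\begin{equation}
  \int_{\mathbb{R}^n} Bu\,(u-M)^+\dd{x} = -\int_{\mathbb{R}^n}\int_{\mathbb{R}^n}\sum_{i=1}^k D^{\beta_i}\big[(u-M)^+\big]\,\phi_i(u,\tau_{\beta_i}u)\,\omega_i\dd{h}\dd{x}.
\end{equation}
Since $\phi_i(0,0)=0$ we may subtract $\phi_i(M,M)$ freely inside; then the monotonicity of $\phi_i$ (increasing in the first argument, decreasing in the second) combined with the sign structure of $D^{\beta_i}[(u-M)^+]$ forces this expression to be $\ge 0$ — this is precisely the Kružkov-entropy-flux positivity already used in \eqref{eq:Bv_ineq1}–\eqref{eq:Bv_ineq2} with $c=M$. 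Consequently
\begin{equation}
  \int_{\mathbb{R}^n} u\,(u-M)^+\dd{x} \le \int_{\mathbb{R}^n} g\,(u-M)^+\dd{x} \le M\int_{\mathbb{R}^n}(u-M)^+\dd{x},
\end{equation}
which rearranges to $\int_{\mathbb{R}^n}(u-M)(u-M)^+\dd{x}\le 0$, i.e.\ $\int ((u-M)^+)^2\le 0$, so $(u-M)^+=0$ a.e.\ and $u\le M$. The symmetric argument with $(u+M)^-$ gives $u\ge -M$, hence $\norm{u}_{\mathcal{L}^\infty(\mathbb{R}^n)}\le\norm{g}_{\mathcal{L}^\infty(\mathbb{R}^n)}$.

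For the $\mathcal{L}^1$ bound the idea is the same but one tests against $\sign_0(u)$, or rather against a smooth approximation $\chi_\delta'(u)$ where $\chi_\delta$ is a smooth convex approximation of $|\cdot|$ (so that $\chi_\delta'$ is a bounded smooth approximation of $\sign_0$, which is admissible since $u\in\mathcal{H}^2$). Multiplying \eqref{eq:regul} by $\chi_\delta'(u)$ and integrating: the diffusion term is $\epsilon\int\chi_\delta''(u)|\nabla u|^2\ge 0$ by convexity of $\chi_\delta$; the $B$-term is again nonnegative by the same integration-by-parts-for-difference-quotients computation with $c=0$ (using $\phi_i(0,0)=0$), exactly as in Definition~\ref{A0:entropy}; and the remaining terms give $\int u\,\chi_\delta'(u)\dd x \le \int g\,\chi_\delta'(u)\dd x \le \int|g|\dd x = \norm{g}_{\mathcal{L}^1(\mathbb{R}^n)}$. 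Letting $\delta\to 0$, $u\,\chi_\delta'(u)\to|u|$ monotonically (or by dominated convergence using the $\mathcal{L}^\infty$ bound just obtained together with $g\in\mathcal{L}^1$ and $u\in\mathcal{L}^2\subset\mathcal{L}^1_{\mathrm{loc}}$), so $\norm{u}_{\mathcal{L}^1(\mathbb{R}^n)}\le\norm{g}_{\mathcal{L}^1(\mathbb{R}^n)}$. That $u\in\mathcal{L}^1(\mathbb{R}^n)$ at all (not just locally) follows from this same estimate applied over balls $B_R$ with the cutoff, letting $R\to\infty$ via monotone convergence.

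The main obstacle I anticipate is technical rather than conceptual: justifying that $(u-M)^+$ and $\chi_\delta'(u)$ are legitimate test functions whose products with $Bu$, $\Delta u$, $u$ and $g$ are all integrable, given only $u\in\mathcal{H}^2(\mathbb{R}^n)$ and $g\in\mathcal{L}^1\cap\mathcal{L}^\infty$. Since $\mathcal{H}^2(\mathbb{R}^n)$ functions need not decay fast enough for $\int|u|\,dx$ to be finite a priori, one should introduce a smooth cutoff $\zeta_R$ (equal to $1$ on $B_R$, supported in $B_{2R}$, with $\norm{\nabla\zeta_R}_\infty\le C/R$), test against $\zeta_R\,(u-M)^+$ resp.\ $\zeta_R\,\chi_\delta'(u)$, control the commutator terms $\int \epsilon\, u^+\nabla u\cdot\nabla\zeta_R$ and the analogous nonlocal commutator (which is $O(1/R)$ times an $\mathcal{L}^2$ quantity, hence vanishes), and only then pass $R\to\infty$. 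The nonlocal term needs the observation — already noted in the remark after Proposition~\ref{prop:A0_accretive} — that finiteness of $\supp(\omega_i)$ is convenient but the difference-quotient manipulations work because $\phi_i$ is Lipschitz (Lemma~\ref{lem:DB}) so $Bu\in\mathcal{L}^2(\mathbb{R}^n)$ and all integrals converge. Once these integrability bookkeeping points are settled, the sign/convexity structure does all the real work.
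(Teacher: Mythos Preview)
Your strategy is essentially the paper's: multiply \eqref{eq:regul} by a sign-type test function (with a spatial cutoff), exploit convexity for the Laplacian term, and use the monotone structure of $\phi_i$ for the $B$-term. The $\mathcal{L}^1$ part is fine and matches the paper almost verbatim: the paper uses the smooth approximation $\Phi_l'(u)$ of $\sign_0(u)$ together with a cutoff $f\in\mathcal{C}_0^\infty$, and the $B$-term sign comes from \eqref{eq:Bv_ineq2} with $c=0$ exactly as you say.

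There is, however, a genuine gap in your $\mathcal{L}^\infty$ argument. You test with $(u-M)^+$ and claim that
\[
-\int_{\mathbb{R}^n}\int_{\mathbb{R}^n}\sum_{i=1}^k D^{\beta_i}\big[(u-M)^+\big]\,\big(\phi_i(u,\tau_{\beta_i}u)-\phi_i(M,M)\big)\,\omega_i\dd{h}\dd{x}\ge 0
\]
follows \emph{pointwise} from \eqref{eq:Bv_ineq1}--\eqref{eq:Bv_ineq2}. It does not: those inequalities concern differences of $\sign_0(\,\cdot\,-c)$, not of $(\,\cdot\,-c)^+$. When both $u>M$ and $\tau_{\beta_i}u>M$ the integrand equals $(u-\tau_{\beta_i}u)\big(\phi_i(u,\tau_{\beta_i}u)-\phi_i(M,M)\big)$, which can be strictly negative; e.g.\ for $\phi(a,b)=a-2b$, $M=0$, $a=3$, $b=2$ one gets $(3-2)(3-4)=-1$. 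So the monotonicity of $\phi_i$ alone does not force pointwise nonnegativity here. The integrated inequality is in fact true, but proving it requires an additional telescoping step: writing $\phi_i(a,b)(a-b)\ge\Psi_i(a)-\Psi_i(b)$ with $\Psi_i(s)=\int_0^s\phi_i(t,t)\dd t$ (which does follow from the monotonicity of $\phi_i$) and then using translation invariance to kill the $\Psi_i$-terms. This is more than what \eqref{eq:Bv_ineq2} provides.

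The paper sidesteps this entirely by testing with (a smooth approximation of) $\sign_0^+(u-M)$ rather than $(u-M)^+$. Then the $B$-term sign is a direct pointwise consequence of \eqref{eq:Bv_ineq2} with $\sign_0$ replaced by $\sign_0^+$ (the paper notes this replacement is valid), and one concludes $\int_{\mathbb{R}^n}(u-M)\sign_0^+(u-M)\dd{x}\le 0$, i.e.\ $\int_{\mathbb{R}^n}(u-M)^+\dd{x}\le 0$. Switching your test function from $(u-M)^+$ to $\sign_0^+(u-M)$ (or rather its regularization $\Phi_l'^+(u-M)$) is the cleanest fix.
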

\begin{proof}
	We treat the case of $\mathcal{L}^1(\mathbb{R}^n)$ first. Define
	\begin{equation}\label{eq:Phi}
		\Phi_l(s)=\begin{cases}-s\quad&\text{if }s\le-\frac{1}{l},\\
			\frac{l}{2}s^2+\frac{1}{2l}\quad&\text{if }|s|\le\frac{1}{l},\\
			s\quad&\text{if }s\ge\frac{1}{l}
		\end{cases}
	\end{equation}
	and let $f\in\mathcal{C}_0^{\infty}(\mathbb{R}^n)$ be such that $0\le f\le1$. Multiplying $\eqref{eq:regul}$ by $\Phi_l'(u)f$ and integrating over $\mathbb{R}^n$ gives
	\begin{equation}\label{eq:regul_estim}
		\int_{\mathbb{R}^n}\qty\big(u\Phi_l'(u)f+\lambda Bu\Phi_l'(u)f-\epsilon\Delta u\Phi_l'(u)f)\dd{x}=\int_{\mathbb{R}^n}g\Phi_l'(u)f\dd{x}\le\norm{g}_{\mathcal{L}^1(\mathbb{R}^n)}.
	\end{equation}
	Since the sequence $\qty\big{u\Phi_l'(u)f}$ is a nonnegative and pointwise non-decreasing sequence with $u\Phi_l'(u)f\rightarrow|u|f$ as $l\rightarrow\infty$, the monotone convergence theorem and the fact that $0\le\Phi_l'f\le1$ implies
	\begin{equation}\label{eq:contr:1}
		\lim_{l\rightarrow\infty}\int_{\mathbb{R}^n}u\Phi_l'(u)f\dd{x}=\int_{\mathbb{R}^n}uf\dd{x}.
	\end{equation}
	Since $\Phi_l'$ is monotone, and $f$ is nonnegative we have that
	\begin{equation}\label{eq:contr:2}
		\begin{aligned}
			\int_{\mathbb{R}^n}\Delta u\Phi_l'(u)f\dd{x}&=-\int_{\mathbb{R}^n}\Phi_l''(u)|\nabla u|^2f\dd{x}-\int_{\mathbb{R}^n}\Phi_l'(u)\nabla u\nabla f\dd{x}\\
			&=-\int_{\mathbb{R}^n}\Phi_l''(u)|\nabla u|^2f\dd{x}+\int_{\mathbb{R}^n}\Phi_l(u)\Delta f\dd{x}\le\int_{\mathbb{R}^n}\Phi_l(u)\Delta f\dd{x}.
		\end{aligned}
	\end{equation}
	By letting $l\rightarrow\infty$ we conclude that
	\begin{equation*}
		-\limsup_{l\rightarrow\infty}\int_{\mathbb{R}^n}\Delta u\Phi_l'(u)f\dd{x}\ge-\int_{\mathbb{R}^n}u\Delta f\dd{x}.
	\end{equation*}
	Finally, the sequence $\qty\big{Bu\Phi_l'(u)f}$ converges pointwise to $Bu\sign_0(u)f$ as $l\rightarrow\infty$ and is dominated by $|Bu|f$. The fact that $|Bu|f$ is integrable follows from Sobolev's embedding of $\mathcal{H}^2$ into $\mathcal{W}^{1,1}$ on the support of $f$ and Lemma \ref{lem:DB}. Thus, using the dominated convergence theorem yields
	\begin{equation}
		\lim_{l\rightarrow\infty}\int_{\mathbb{R}^n}Bu\Phi_l'(u)f\dd{x}=\int_{\mathbb{R}^n}Bu\sign_0(u)f\dd{x}.
	\end{equation}
	Use the integration by parts formula for difference quotients to find that
		\begin{equation}
			\begin{aligned}
				\lim_{l\rightarrow\infty}\int_{\mathbb{R}^n}Bu\Phi_l'(u)f\dd{x}&=-\int_{\mathbb{R}^n}\int_{\mathbb{R}^n}\sum_{i=1}^kD^{\beta_i}\sign_0(u)\tau_{\beta_i}f\phi_i(u,\tau_{\beta_i}u)\omega_i\dd{h}\dd{x}\\
				&-\int_{\mathbb{R}^n}\int_{\mathbb{R}^n}\Phi_l'(u)D^{\beta_i}f\phi_i(u,\tau_{\beta_i}u)\omega_i\dd{h}\dd{x},
			\end{aligned}
	\end{equation}
	and apply inequality \eqref{eq:Bv_ineq2} with $c=0$ to conclude that 
	\begin{equation}
		\lim_{l\rightarrow\infty}\int_{\mathbb{R}^n}Bu\Phi_l'(u)f\dd{x}\ge-\int_{\mathbb{R}^n}\int_{\mathbb{R}^n}\sum_{i=1}^k\sign_0(u)D^{\beta_i}f\phi_i(u,\tau_{\beta_i}u)\omega_i\dd{h}\dd{x}.\label{eq:contr:3}
	\end{equation}
	Substituting \eqref{eq:contr:1}, \eqref{eq:contr:2} and \eqref{eq:contr:3} into $\eqref{eq:regul_estim}$ yields
	\begin{equation}
		\int_{\mathbb{R}^n}uf\dd{x}-\int_{\mathbb{R}^n}\int_{\mathbb{R}^n}\sum_{i=1}^k\sign_0(u)D^{\beta_i}f\phi_i(u,\tau_{\beta_i}u)\omega_i\dd{h}\dd{x}-\epsilon\int_{\mathbb{R}^n}u\Delta f\dd{x}\le\norm{g}_{\mathcal{L}^1(\mathbb{R}^n)}.
	\end{equation}
	Let $\kappa\in\mathcal{C}_0^{\infty}(\mathbb{R})$ nonnegative such that $\kappa(s)=1$ for $|s|\le1$. Set $f_l(\xi)=\kappa\qty\Big(\frac{\norm{\xi}_{\mathbb{R}^n}}{l})$. Since the difference quotient $D^{\beta_i}f_l$ is bounded and is zero for $x\in\mathbb{R}^n$ such that $\norm{x\pm\beta_i}_{\mathbb{R}^n}\le l$  (see \eqref{eq:Dbif}), letting $l\rightarrow\infty$ yields
	\begin{equation}
		\norm{u}_{\mathcal{L}^1(\mathbb{R}^n)}\le\norm{g}_{\mathcal{L}^1(\mathbb{R}^n)}.
	\end{equation}

	For the case of $\mathcal{L}^{\infty}(\mathbb{R}^n)$, let $M\in\mathbb{R}$ be such that $M\ge g^+$ a.e. Subtract $M$ from $\eqref{eq:regul}$, multiply by $\Phi_l'^+(u-M)$ and integrate over $\mathbb{R}^n$ to find that
	\begin{equation}\label{eq:contr2}
		\int_{\mathbb{R}^n}(u-M+\lambda Bu-\epsilon\Delta u)\Phi_l'^+(u-M)\dd{x}=\int_{\mathbb{R}^n}(g-M)\Phi_l'^+(u-M)\dd{x}\le0.
	\end{equation}
	A similar argument as in \eqref{eq:contr:2} gives
	\begin{equation}
		\lim_{l\rightarrow\infty}\int_{\mathbb{R}^n}\Delta u\Phi_l'^+(u-M)\dd{x}\le0,\label{eq:contr2_begin}
	\end{equation}
	as before. Again, integration by parts for difference quotients and the inequality \eqref{eq:Bv_ineq2} with $c=M$ (the reader may want to check that $\sign_0$ and $\sign_0^{\pm}$ are interchangeable in \eqref{eq:Bv_ineq2}) imply that
	\begin{equation}
		\begin{aligned}
			&\lim_{l\rightarrow\infty}\int_{\mathbb{R}^n}Bu\Phi_l'^+(u-M)\dd{x}=-\int_{\mathbb{R}^n}\int_{\mathbb{R}^n}\sum_{i=1}^kD^{\beta_i}\sign_0^+(u-M)\qty\big[\phi_i(u,\tau_{\beta_i}u)-\phi_i(M,M)]\omega_i\dd{h}\dd{x}\ge0.\label{eq:contr2_end}
		\end{aligned}
	\end{equation}
	Substituting \eqref{eq:contr2_begin} and \eqref{eq:contr2_end} into \eqref{eq:contr2} yields
	\begin{equation}
		\int_{\mathbb{R}^n}(u-M)\Phi_l'^+(u-M)\dd{x}\le0,
	\end{equation}
	which implies that $u\le M$ a.e.

	To establish an analogous lower bound, let $M$ be such that $M\le g^-$ a.e. Add $M$ to \eqref{eq:regul}, multiply by $\Phi_l'^-(u+M)$ and integrate over $\mathbb{R}^n$ to conclude that
	\begin{equation}
		\int_{\mathbb{R}^n}(u+M+\lambda Bu-\epsilon\Delta u)\Phi_l'^-(u+M)\dd{x}=\int_{\mathbb{R}^n}(g+M)\Phi_l'(u+M)^-\dd{x}\le0.
	\end{equation}
	Similar estimates as before show that
	\begin{equation}
		\int_{\mathbb{R}^n}(u+M)\Phi_l'^-(u+M)\dd{x}\le0,
	\end{equation}
	which implies that $-M\le u$ a.e. Setting $M=\norm{g}_{\mathcal{L}^{\infty}(\mathbb{R}^n)}$ concludes the proof.
\end{proof}

\begin{rem}\label{rem:infty}
	The proof also shows that the maximum principle holds for equation \eqref{eq:regul}; that is, any solution $u\in\mathcal{H}^2(\mathbb{R}^n)$ of \eqref{eq:regul} satisfies $-\norm{g^-}_{\mathcal{L}^{\infty}(\mathbb{R}^n)}\le u\le\norm{g^+}_{\mathcal{L}^{\infty}(\mathbb{R}^n)}$ a.e.
\end{rem}

Hölder's inequality immediately yields the following result.
\begin{cor}\label{cor:u_Lp}
	Let the assumptions of Lemma \ref{lem:u_Lp} hold and let $g\in\mathcal{L}^1(\mathbb{R}^n)\cap\mathcal{L}^{\infty}(\mathbb{R}^n)$. Then $u\in\mathcal{L}^p(\mathbb{R}^n)$ for $p\ge1$ with $\norm{u}_{\mathcal{L}^p(\mathbb{R}^n)}\le\norm{g}_{\mathcal{L}^1(\mathbb{R}^n)}^{\frac{1}{p}}\norm{g}_{\mathcal{L}^{\infty}(\mathbb{R}^n)}^{1-\frac{1}{p}}$.
\end{cor}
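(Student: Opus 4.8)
The plan is very short, since the statement is nothing more than Lemma \ref{lem:u_Lp} combined with the elementary interpolation bound $\norm{\cdot}_{\mathcal{L}^p(\mathbb{R}^n)}\le\norm{\cdot}_{\mathcal{L}^1(\mathbb{R}^n)}^{1/p}\norm{\cdot}_{\mathcal{L}^{\infty}(\mathbb{R}^n)}^{1-1/p}$. First I would invoke Lemma \ref{lem:u_Lp}, which under the present hypotheses yields $u\in\mathcal{L}^1(\mathbb{R}^n)\cap\mathcal{L}^{\infty}(\mathbb{R}^n)$ together with the two bounds $\norm{u}_{\mathcal{L}^1(\mathbb{R}^n)}\le\norm{g}_{\mathcal{L}^1(\mathbb{R}^n)}$ and $\norm{u}_{\mathcal{L}^{\infty}(\mathbb{R}^n)}\le\norm{g}_{\mathcal{L}^{\infty}(\mathbb{R}^n)}$. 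In particular $u\in\mathcal{L}^p(\mathbb{R}^n)$ for every $p\in[1,\infty]$, because $\mathcal{L}^1(\mathbb{R}^n)\cap\mathcal{L}^{\infty}(\mathbb{R}^n)\subset\mathcal{L}^p(\mathbb{R}^n)$; so the only remaining task is the quantitative estimate.

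For that, fix $p\in[1,\infty)$ (the case $p=\infty$ being precisely the $\mathcal{L}^{\infty}$-bound of Lemma \ref{lem:u_Lp}). Since $p-1\ge0$, one has the pointwise estimate $|u|^p=|u|\,|u|^{p-1}\le\norm{u}_{\mathcal{L}^{\infty}(\mathbb{R}^n)}^{p-1}|u|$ almost everywhere; integrating over $\mathbb{R}^n$ and taking $p$-th roots gives $\norm{u}_{\mathcal{L}^p(\mathbb{R}^n)}\le\norm{u}_{\mathcal{L}^1(\mathbb{R}^n)}^{1/p}\norm{u}_{\mathcal{L}^{\infty}(\mathbb{R}^n)}^{1-1/p}$, and inserting the two bounds from Lemma \ref{lem:u_Lp} yields
\[
	\norm{u}_{\mathcal{L}^p(\mathbb{R}^n)}\le\norm{g}_{\mathcal{L}^1(\mathbb{R}^n)}^{1/p}\norm{g}_{\mathcal{L}^{\infty}(\mathbb{R}^n)}^{1-1/p},
\]
which is the claim. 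This is the usual log-convexity of $\mathcal{L}^p$-norms (equivalently a one-line application of Hölder's inequality), so there is no genuine obstacle; the only point worth noting is that the hypothesis $p\ge1$ is exactly what makes the exponent $p-1$ nonnegative, so that the pointwise bound $|u|^{p-1}\le\norm{u}_{\mathcal{L}^{\infty}(\mathbb{R}^n)}^{p-1}$ is legitimate.
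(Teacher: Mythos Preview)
Your proof is correct and matches the paper's approach: the paper simply states that ``Hölder's inequality immediately yields the following result,'' and your pointwise estimate $|u|^p\le\norm{u}_{\mathcal{L}^{\infty}(\mathbb{R}^n)}^{p-1}|u|$ is exactly that interpolation, combined with the two bounds from Lemma~\ref{lem:u_Lp}.
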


The next result shows the uniqueness of solutions of \eqref{eq:regul} for $g\in\mathcal{L}^1(\mathbb{R}^n)$.

\begin{lemma}\label{lem:regul_L1+}
	Let the assumptions of Lemma \ref{lem:u_Lp} hold and let $u,v\in\mathcal{H}^2(\mathbb{R}^n)$ satisfy
	\begin{align}
		&u+\lambda Bu-\epsilon\Delta u=g_1,\\
		&v+\lambda Bv-\epsilon\Delta v=g_2.
	\end{align}
	If $g_1,g_2\in\mathcal{L}^1(\mathbb{R}^n)$, then
	\begin{equation}
		\norm{(u-v)^+}_{\mathcal{L}^1(\mathbb{R}^n)}\le\norm{(g_1-g_2)^+}_{\mathcal{L}^1(\mathbb{R}^n)}.
	\end{equation}
\end{lemma}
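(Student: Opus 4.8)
The plan is to follow the proof of Lemma \ref{lem:u_Lp}, but applied to the difference $w:=u-v$, tested against a one-sided truncation of the positive part, with the nonlocal term controlled by the two-function analogue of the sign inequality \eqref{eq:Bv_ineq2}. Subtracting the two equations gives $w+\lambda(Bu-Bv)-\epsilon\Delta w=g$ with $g:=g_1-g_2$. Let $\Phi_l^+$ be the one-sided analogue of the function $\Phi_l$ in \eqref{eq:Phi}, so that $\Phi_l^+$ is convex, $(\Phi_l^+)'\nearrow\sign_0^+$ and $\Phi_l^+\to(\cdot)^+$ as $l\to\infty$. Fix $f\in\mathcal{C}_0^{\infty}(\mathbb{R}^n)$ with $0\le f\le1$, multiply the equation by $(\Phi_l^+)'(w)f$ and integrate; the right-hand side is at most $\int_{\mathbb{R}^n}g^+f\dd{x}\le\norm{(g_1-g_2)^+}_{\mathcal{L}^1(\mathbb{R}^n)}$. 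As in Lemma \ref{lem:u_Lp}, the zeroth-order term increases to $\int_{\mathbb{R}^n}w^+f\dd{x}$ by monotone convergence; two integrations by parts and discarding the nonnegative $\int_{\mathbb{R}^n}(\Phi_l^+)''(w)\abs{\nabla w}^2f\dd{x}$ give $-\epsilon\int_{\mathbb{R}^n}\Delta w(\Phi_l^+)'(w)f\dd{x}\ge-\epsilon\int_{\mathbb{R}^n}\Phi_l^+(w)\Delta f\dd{x}$, the latter tending to $-\epsilon\int_{\mathbb{R}^n}w^+\Delta f\dd{x}$; and since $(Bu-Bv)f\in\mathcal{L}^1(\mathbb{R}^n)$ (by Lemma \ref{lem:DB} and the Sobolev embedding of $\mathcal{H}^2$ into $\mathcal{W}^{1,1}$ on $\supp f$), dominated convergence gives $\int_{\mathbb{R}^n}(Bu-Bv)(\Phi_l^+)'(w)f\dd{x}\to\int_{\mathbb{R}^n}(Bu-Bv)\sign_0^+(w)f\dd{x}$. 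Passing to the limit,
\[
	\int_{\mathbb{R}^n}w^+f\dd{x}+\lambda\int_{\mathbb{R}^n}(Bu-Bv)\sign_0^+(w)f\dd{x}-\epsilon\int_{\mathbb{R}^n}w^+\Delta f\dd{x}\le\norm{(g_1-g_2)^+}_{\mathcal{L}^1(\mathbb{R}^n)}.
\]

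The heart of the argument is a lower bound for the nonlocal term. Writing $Bu-Bv=\int_{\mathbb{R}^n}\sum_iD^{\beta_i}\big[\phi_i(\tau_{-\beta_i}u,u)-\phi_i(\tau_{-\beta_i}v,v)\big]\omega_i\dd{h}$, integrating by parts for difference quotients and using $D^{\beta_i}[\sign_0^+(w)f]=\sign_0^+(w)D^{\beta_i}f+\tau_{\beta_i}f\,D^{\beta_i}(\sign_0^+(w))$, that term equals
\[
	-\int_{\mathbb{R}^n}\int_{\mathbb{R}^n}\sum_i\Big(\sign_0^+(w)D^{\beta_i}f+\tau_{\beta_i}f\,D^{\beta_i}(\sign_0^+(w))\Big)\big(\phi_i(u,\tau_{\beta_i}u)-\phi_i(v,\tau_{\beta_i}v)\big)\omega_i\dd{h}\dd{x}.
\]
The crucial pointwise inequality, which is the comparison version of \eqref{eq:Bv_ineq2}, is
\[
	\big[\sign_0^+(\tau_{\beta_i}w)-\sign_0^+(w)\big]\big(\phi_i(u,\tau_{\beta_i}u)-\phi_i(v,\tau_{\beta_i}v)\big)\le0,\qquad w=u-v,
\]
which follows from the monotonicity of $\phi_i$ in each argument by the same case analysis as in \eqref{eq:Bv_ineq1}–\eqref{eq:Bv_ineq2}, with the constant $c$ replaced by the value $v(x)$. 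Since $\tau_{\beta_i}f\ge0$ and $\omega_i\ge0$, the $D^{\beta_i}(\sign_0^+(w))$-contribution is nonnegative and may be discarded, so
\[
	\int_{\mathbb{R}^n}(Bu-Bv)\sign_0^+(w)f\dd{x}\ge-\int_{\mathbb{R}^n}\int_{\mathbb{R}^n}\sum_i\sign_0^+(w)D^{\beta_i}f\big(\phi_i(u,\tau_{\beta_i}u)-\phi_i(v,\tau_{\beta_i}v)\big)\omega_i\dd{h}\dd{x}=:-R(f),
\]
and combining with the previous display yields $\int_{\mathbb{R}^n}w^+f\dd{x}\le\norm{(g_1-g_2)^+}_{\mathcal{L}^1(\mathbb{R}^n)}+\lambda R(f)+\epsilon\int_{\mathbb{R}^n}w^+\Delta f\dd{x}$.

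To conclude I would take $f=f_l\nearrow1$ to be the radial cutoffs used earlier, for which $D^{\beta_i}f_l$ and $\Delta f_l$ are bounded and supported in expanding annuli. Monotone convergence gives $\int_{\mathbb{R}^n}w^+f_l\dd{x}\to\norm{w^+}_{\mathcal{L}^1(\mathbb{R}^n)}$, while, recalling $u,v\in\mathcal{L}^1(\mathbb{R}^n)$ (cf.\ Lemma \ref{lem:u_Lp}), the Lipschitz bound $\abs{\phi_i(u,\tau_{\beta_i}u)-\phi_i(v,\tau_{\beta_i}v)}\le C(\abs{w}+\abs{\tau_{\beta_i}w})$, the compact support of $\omega_i$, and the annular localization of $D^{\beta_i}f_l$ and $\Delta f_l$ force $R(f_l)\to0$ and $\epsilon\int_{\mathbb{R}^n}w^+\Delta f_l\dd{x}\to0$; hence $\norm{(u-v)^+}_{\mathcal{L}^1(\mathbb{R}^n)}\le\norm{(g_1-g_2)^+}_{\mathcal{L}^1(\mathbb{R}^n)}$. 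The main obstacle is precisely the nonlocal term: one cannot carry the smooth truncation $(\Phi_l^+)'(w)$ through the integration by parts, because the displayed sign inequality is genuinely a statement about the signs of $w$ and $\tau_{\beta_i}w$ and fails for $D^{\beta_i}(\Phi_l^+)'(w)$; hence one must first send $l\to\infty$ in that term (which is why $Bu-Bv\in\mathcal{L}_{loc}^1(\mathbb{R}^n)$ is needed) and only afterwards discard the good-sign piece. The remaining estimates run exactly as in Lemma \ref{lem:u_Lp} and the accretivity computation of Proposition \ref{prop:A0_accretive}.
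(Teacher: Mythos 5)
Your proposal is correct and follows essentially the same route as the paper: test the difference equation for $w=u-v$ against $\Psi_l'(w)f$ with $\Psi_l'=\Phi_l'^+$, discard the nonnegative $\Psi_l''|\nabla w|^2$ term, use integration by parts for difference quotients together with the two-function analogue of \eqref{eq:Bv_ineq2} (with $c$ replaced by $v$) to drop the sign-difference contribution, and finally localize with the radial cutoffs $f_l$. The only cosmetic difference is that you pass to the limit $l\to\infty$ before the discrete integration by parts while the paper does it afterwards; both orders are valid since the sign inequality is applied only after the truncation has been removed.
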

\begin{proof}
	The proof follows the proof of Lemma \ref{lem:u_Lp}. Let $w=u-v$. Then $w$ satisfies
	\begin{equation}\label{eq:uniq:w}
		w+\lambda(Bu-Bv)-\epsilon\Delta w=g_1-g_2.
	\end{equation}
	Let $f\in\mathcal{C}_0^{\infty}(\mathbb{R}^n)$ be such that $0\le f\le 1$. Define $\Psi_l$ by setting $\Psi_l'=\Phi_l'^+$ and $\Psi_l(0)=0$. Multiply \eqref{eq:uniq:w} by $\Psi_l'(w)f$ and integrate over $\mathbb{R}^n$ to find that
	\begin{equation}
		\begin{aligned}\label{eq:uniq_main}
			\int_{\mathbb{R}^n}\qty\big(w+\lambda(Bu-Bv)-\epsilon\Delta w)\Psi_l'(w)f\dd{x}=\int_{\mathbb{R}^n}(g_1-g_2)\Psi_l'(w)f\dd{x}\le\norm{(g_1-g_2)^+}_{\mathcal{L}^1(\mathbb{R}^n)}
		\end{aligned}
	\end{equation}
	holds, since $0\le\Psi_l'f\le1$. The facts that $\Psi_l(w)\in\mathcal{H}^1(\mathbb{R}^n)$ and that both $\Psi_l'',f\ge0$ imply that
	\begin{equation}
		\int_{\mathbb{R}^n}\Delta w\Psi_l'(w)f\dd{x}\le\int_{\mathbb{R}^n}\Psi_l(w)\Delta f\dd{x},
	\end{equation}
	and thus
	\begin{equation}\label{eq:uniq_Laplace}
		-\limsup_{l\rightarrow\infty}\int_{\mathbb{R}^n}\Delta w\Psi_l'(w)f\dd{x}\ge-\int_{\mathbb{R}^n}w^+\Delta f\dd{x}.
	\end{equation}
	as before. Integration by parts for difference quotients yields
	\begin{equation}
		\begin{aligned}
			\int_{\mathbb{R}^n}(Bu-Bv)\Psi_l'(w)f\dd{x}&=-\int_{\mathbb{R}^n}\int_{\mathbb{R}^n}\sum_{i=1}^kD^{\beta_i}\Psi_l'(w)\tau_{\beta_i}f\qty\big[\phi_i(u,\tau_{\beta_i}u)-\phi_i(v,\tau_{\beta_i}v)]\omega_i\dd{h}\dd{x}\\
			&-\int_{\mathbb{R}^n}\int_{\mathbb{R}^n}\sum_{i=1}^k\Psi_l'(w)D^{\beta_i}f\qty\big[\phi_i(u,\tau_{\beta_i}u)-\phi_i(v,\tau_{\beta_i}v)]\omega_i\dd{h}\dd{x}.
		\end{aligned}
	\end{equation}
	Letting $l\rightarrow\infty$ in the first integral and using a similar argument as in \eqref{eq:Bv_ineq2} we find that 
	\begin{equation}
		-\lim_{l\rightarrow\infty}\int_{\mathbb{R}^n}\int_{\mathbb{R}^n}\sum_{i=1}^kD^{\beta_i}\Psi_l'(w)\tau_{\beta_i}f\qty\big[\phi_i(u,\tau_{\beta_i}u)-\phi_i(v,\tau_{\beta_i}v)]\omega_i\dd{h}\dd{x}\ge0,
	\end{equation}
	and thus, by the dominated convergence theorem,
	\begin{equation}\label{eq:uniq_B}
		\begin{aligned}
			&\lim_{l\rightarrow\infty}\int_{\mathbb{R}^n}(Bu-Bv)\Psi_l'(w)f\dd{x}\ge-\int_{\mathbb{R}^n}\int_{\mathbb{R}^n}\sum_{i=1}^k\sign_0^+(w)D^{\beta_i}f\qty\big[\phi_i(u,\tau_{\beta_i}u)-\phi_i(v,\tau_{\beta_i}v)]\omega_i\dd{h}\dd{x}.
		\end{aligned}
	\end{equation}
	Using $\eqref{eq:uniq_Laplace}$ and $\eqref{eq:uniq_B}$ in $\eqref{eq:uniq_main}$ and letting $l\rightarrow\infty$ gives
	\begin{equation}
		\begin{aligned}
			&\int_{\mathbb{R}^n}w^+f\dd{x}-\lambda\int_{\mathbb{R}^n}\int_{\mathbb{R}^n}\sum_{i=1}^k\sign_0^+(w)D^{\beta_i}f\qty\big[\phi_i(u,\tau_{\beta_i}u)-\phi_i(v,\tau_{\beta_i}v)]\omega_i\dd{h}\dd{x}\\
			&-\epsilon\int_{\mathbb{R}^n}w^+\Delta f\dd{x}\le\norm{(g_1-g_2)^+}_{\mathcal{L}^1(\mathbb{R}^n)}.
		\end{aligned}
	\end{equation}
	By the same argument as before, let $\kappa\in\mathcal{C}_0^{\infty}(\mathbb{R})$ nonnegative such that $\kappa(s)=1$ for $|s|\le1$. Set $f_l(\xi)=\kappa\qty\Big(\frac{\norm{\xi}_{\mathbb{R}^n}}{l})$. Since the difference quotient $D^{\beta_i}f_l$ is bounded and is zero for $x\in\mathbb{R}^n$ such that $\norm{x\pm\beta_i}_{\mathbb{R}^n}\le l$  (see \eqref{eq:Dbif}), letting $l\rightarrow\infty$ yields
	\begin{equation}
		\int_{\mathbb{R}^n}w^+\dd{x}=\norm{(u-v)^+}_{\mathcal{L}^1(\mathbb{R}^n)}\le\norm{(g_1-g_2)^+}_{\mathcal{L}^1(\mathbb{R}^n)}.
	\end{equation}
\end{proof}

\begin{cor}\label{cor:regul_L1}
	Let the assumptions of Lemma \ref{lem:u_Lp} hold and let $u,v\in\mathcal{H}^2(\mathbb{R}^n)$ satisfy
	\begin{align}
		&u+Bu-\epsilon\Delta u=g_1\\
		&v+Bv-\epsilon\Delta v=g_2.
	\end{align}
	If $g_1,g_2\in\mathcal{L}^1(\mathbb{R}^n)$, then
	\begin{equation}
		\norm{u-v}_{\mathcal{L}^1(\mathbb{R}^n)}\le\norm{g_1-g_2}_{\mathcal{L}^1(\mathbb{R}^n)}.
	\end{equation}
\end{cor}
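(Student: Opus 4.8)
The plan is to obtain the full $\mathcal{L}^1$-contraction estimate from the one-sided bound of Lemma \ref{lem:regul_L1+} by a simple symmetrization. First I would apply Lemma \ref{lem:regul_L1+} with $\lambda=1$ to the pair $(u,g_1)$, $(v,g_2)$, which gives
\begin{equation}
	\norm{(u-v)^+}_{\mathcal{L}^1(\mathbb{R}^n)}\le\norm{(g_1-g_2)^+}_{\mathcal{L}^1(\mathbb{R}^n)}.
\end{equation}
Then I would interchange the roles of the two solutions, applying the same lemma to $(v,g_2)$, $(u,g_1)$, to obtain
\begin{equation}
	\norm{(v-u)^+}_{\mathcal{L}^1(\mathbb{R}^n)}\le\norm{(g_2-g_1)^+}_{\mathcal{L}^1(\mathbb{R}^n)}.
\end{equation}

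Next I would invoke the elementary identities $(v-u)^+=(u-v)^-$ and $(g_2-g_1)^+=(g_1-g_2)^-$ together with the decomposition $|w|=w^++w^-$, valid for any measurable $w$, which yields $\norm{w}_{\mathcal{L}^1(\mathbb{R}^n)}=\norm{w^+}_{\mathcal{L}^1(\mathbb{R}^n)}+\norm{w^-}_{\mathcal{L}^1(\mathbb{R}^n)}$. Adding the two displayed inequalities then gives
\begin{equation}
	\norm{u-v}_{\mathcal{L}^1(\mathbb{R}^n)}=\norm{(u-v)^+}_{\mathcal{L}^1(\mathbb{R}^n)}+\norm{(u-v)^-}_{\mathcal{L}^1(\mathbb{R}^n)}\le\norm{(g_1-g_2)^+}_{\mathcal{L}^1(\mathbb{R}^n)}+\norm{(g_1-g_2)^-}_{\mathcal{L}^1(\mathbb{R}^n)}=\norm{g_1-g_2}_{\mathcal{L}^1(\mathbb{R}^n)},
\end{equation}
which is the assertion.

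There is essentially no obstacle: all the analytic content is already contained in Lemma \ref{lem:regul_L1+}, and the corollary is merely the observation that a one-sided $\mathcal{L}^1$-estimate, being symmetric in the data, upgrades to a two-sided one. The only points to verify are bookkeeping: that the hypotheses of Lemma \ref{lem:regul_L1+} (hence of Lemma \ref{lem:u_Lp}) are met --- bounded $\mathcal{C}^1$ flux partial derivatives, $u,v\in\mathcal{H}^2(\mathbb{R}^n)$, and $g_1,g_2\in\mathcal{L}^1(\mathbb{R}^n)$, all assumed here --- and that the parameter $\lambda$ in Lemma \ref{lem:regul_L1+} may be taken equal to $1$ so as to match the stationary equations as written in the corollary.
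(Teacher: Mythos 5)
Your argument is correct and is exactly the paper's proof: apply Lemma \ref{lem:regul_L1+} twice with the roles of $(u,g_1)$ and $(v,g_2)$ interchanged (with $\lambda=1$), then add the two one-sided bounds using $\norm{w}_{\mathcal{L}^1(\mathbb{R}^n)}=\norm{w^+}_{\mathcal{L}^1(\mathbb{R}^n)}+\norm{w^-}_{\mathcal{L}^1(\mathbb{R}^n)}$. Nothing further is needed.
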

\begin{proof}
	Notice that the equality
	\begin{equation}
		\norm{a-b}_{\mathcal{L}^1(\mathbb{R}^n)}=\norm{(a-b)^+}_{\mathcal{L}^1(\mathbb{R}^n)}+\norm{(b-a)^+}_{\mathcal{L}^1(\mathbb{R}^n)}
	\end{equation}
	holds for any $a,b\in\mathcal{L}^1(\mathbb{R}^n)$. Lemma \ref{lem:regul_L1+} shows that
	\begin{align}
		&\norm{(u-v)^+}_{\mathcal{L}^1(\mathbb{R}^n)}\le\norm{(g_1-g_2)^+}_{\mathcal{L}^1(\mathbb{R}^n)},\\
		&\norm{(v-u)^+}_{\mathcal{L}^1(\mathbb{R}^n)}\le\norm{(g_2-g_1)^+}_{\mathcal{L}^1(\mathbb{R}^n)}.
	\end{align}
	Hence, the inequality $\norm{u-v}_{\mathcal{L}^1(\mathbb{R}^n)}\le\norm{g_1-g_2}_{\mathcal{L}^1(\mathbb{R}^n)}$ holds as claimed.
\end{proof}

The next result shows the existence of a unique generalized solution of \eqref{eq:regul} for $g\in\mathcal{L}^1(\mathbb{R}^n)\cap\mathcal{L}^{\infty}(\mathbb{R}^n)$ and plays an essential role in our developments. In order to do so we consider the problem on the ball $B_r\subset\mathbb{R}^n$ for $r>0$ with zero Dirichlet boundary condition. Let $u^r\in\mathcal{H}_0^1(B_r)\cap\mathcal{H}^2(B_r)=:\mathcal{H}_0^2(B_r)$ satisfy
\begin{equation}\label{eq:Dirichlet}
	\begin{aligned}
		&u^r(x)+\lambda Bu^r(x)-\epsilon\Delta u^r(x)=g(x),\qquad&&x\in B_r;\\
		&u^r(x)=0,\qquad&&x\in\partial B_r,
	\end{aligned}
\end{equation}
where $\Delta$ denotes the Dirichlet-Laplacian $\Delta_D$ on $\mathcal{L}^2(B_r)$ with $D(\Delta_D)=\mathcal{H}_0^2(B_r)$. For the operator $B$ to remain meaningful we use the $E:\mathcal{H}_0^{1}(B_r)\mapsto\mathcal{H}^1(\mathbb{R}^n)$ extension operator \cite[Chapter 5.4]{Evans2010} on $u^r$ supplemented with the fact that $\supp(Eu^r)=\supp(u^r)$ and $\norm{Eu^r}_{\mathcal{H}^1(\mathbb{R}^n)}=\norm{u^r}_{\mathcal{H}_0^1(B_r)}$ \cite{Calderon1961}. Then we use the restriction operator $R:\mathcal{L}^2(\mathbb{R}^n)\mapsto\mathcal{L}^2(B_r)$ on $BEu^r$ to obtain the operator $RBE:\mathcal{H}_0^1(B_r)\mapsto\mathcal{L}^2(B_r)$. As in \eqref{eq:Dirichlet}, we will denote $\Delta_D$ by $\Delta$ and $RBE$ by $B$ for brevity.

\begin{rem}\label{rem:dir}
	One can verify from the proof of Lemmata \ref{lem:DB}, \ref{lem:Bcont}, \ref{lem:u_Lp} and \ref{lem:regul_L1+} and Corollaries \ref{cor:u_Lp} and \ref{cor:regul_L1} that they all hold for the Dirichlet problem too. Minor steps of the proofs have to be modified, for example, in the proof of Lemma \ref{lem:u_Lp}, instead of multiplying by $\Phi_l'(u)$ and integrating over $\mathbb{R}^n$ we multiply by $\Phi_l'(Eu^r)$ and integrate over $B_r$. Then we can repeat the same estimates as before. Similar arguments should be used in the rest of the proofs as well.
\end{rem}

\begin{prop}\label{prop:regul_u_exist}
	Let the assumptions of Lemma \ref{lem:u_Lp} hold. Then for each $g\in\mathcal{L}^1(\mathbb{R}^n)\cap\mathcal{L}^{\infty}(\mathbb{R}^n)$ there is a unique solution $u\in\mathcal{H}^2(\mathbb{R}^n)$ of \eqref{eq:regul}.
\end{prop}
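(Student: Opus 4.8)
\emph{Strategy.} The plan is to deduce uniqueness from the $\mathcal{L}^1$-contraction estimate already at our disposal, and to obtain existence by a Leray--Schauder (Schaefer) fixed point argument on balls $B_r$, in the spirit of \cite[Chapter 4]{Lions1982} and \cite[Proposition IV.3]{Crandall1983}, followed by a compactness passage to the limit $r\to\infty$. Uniqueness is immediate: if $u,v\in\mathcal{H}^2(\mathbb{R}^n)$ both solve \eqref{eq:regul}, then Lemma \ref{lem:regul_L1+} with $g_1=g_2=g$ gives $\norm{(u-v)^+}_{\mathcal{L}^1(\mathbb{R}^n)}=\norm{(v-u)^+}_{\mathcal{L}^1(\mathbb{R}^n)}=0$, so $u=v$ a.e.

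\emph{Existence on $B_r$.} Fix $r>0$ and define $T\colon\mathcal{H}_0^1(B_r)\to\mathcal{H}_0^1(B_r)$ by $Tv=(I-\epsilon\Delta_D)^{-1}(g-\lambda Bv)$, where $B=RBE$ as in \eqref{eq:Dirichlet} and $(I-\epsilon\Delta_D)^{-1}\colon\mathcal{L}^2(B_r)\to\mathcal{H}_0^2(B_r)$ is the resolvent of the Dirichlet Laplacian, well defined by elliptic regularity on the ball. By Lemma \ref{lem:DB} in its Dirichlet form (Remark \ref{rem:dir}) $B$ maps bounded subsets of $\mathcal{H}_0^1(B_r)$ into bounded subsets of $\mathcal{L}^2(B_r)$, so $T$ maps bounded sets into bounded subsets of $\mathcal{H}_0^2(B_r)$; since $\mathcal{H}_0^2(B_r)\hookrightarrow\mathcal{H}_0^1(B_r)$ is compact and $B\colon\mathcal{H}_0^1(B_r)\to\mathcal{L}^2(B_r)$ is continuous (Lemma \ref{lem:Bcont}, Remark \ref{rem:dir}), $T$ is a continuous compact map, and a fixed point $u^r=Tu^r$ is precisely a solution of \eqref{eq:Dirichlet}. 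To invoke Schaefer's theorem it remains to bound the set of $u\in\mathcal{H}_0^1(B_r)$ with $u=\sigma Tu$ for some $\sigma\in[0,1]$: such $u$ solves $u+\sigma\lambda Bu-\epsilon\Delta u=\sigma g$, so the Dirichlet versions of Lemma \ref{lem:u_Lp} and Corollary \ref{cor:u_Lp} yield $\norm{u}_{\mathcal{L}^1(B_r)}\le\norm{g}_{\mathcal{L}^1(\mathbb{R}^n)}$, $\norm{u}_{\mathcal{L}^{\infty}(B_r)}\le\norm{g}_{\mathcal{L}^{\infty}(\mathbb{R}^n)}$ and $\norm{u}_{\mathcal{L}^2(B_r)}\le\norm{g}_{\mathcal{L}^1(\mathbb{R}^n)}^{1/2}\norm{g}_{\mathcal{L}^{\infty}(\mathbb{R}^n)}^{1/2}=:M$; testing the equation with $u$ and using $|\langle Bu,u\rangle|\le\norm{Bu}_{\mathcal{L}^2}\norm{u}_{\mathcal{L}^2}\le CM\norm{\nabla u}_{\mathcal{L}^2}$ from Lemma \ref{lem:DB} gives $\epsilon\norm{\nabla u}_{\mathcal{L}^2}^2\le M\norm{g}_{\mathcal{L}^2}+\lambda CM\norm{\nabla u}_{\mathcal{L}^2}$, hence a bound on $\norm{u}_{\mathcal{H}_0^1(B_r)}$ depending only on $\epsilon,\lambda,\norm{g}_{\mathcal{L}^1(\mathbb{R}^n)},\norm{g}_{\mathcal{L}^{\infty}(\mathbb{R}^n)}$, uniform in $r$ and $\sigma$. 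Schaefer's theorem then produces $u^r\in\mathcal{H}_0^2(B_r)$ solving \eqref{eq:Dirichlet}.

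\emph{Passage to $\mathbb{R}^n$.} Extending each $u^r$ by zero, the bounds above are uniform in $r$: $\norm{u^r}_{\mathcal{L}^1(\mathbb{R}^n)}\le\norm{g}_{\mathcal{L}^1}$, $\norm{u^r}_{\mathcal{L}^{\infty}(\mathbb{R}^n)}\le\norm{g}_{\mathcal{L}^{\infty}}$, $\norm{\nabla u^r}_{\mathcal{L}^2(\mathbb{R}^n)}\le C$, whence $\norm{Bu^r}_{\mathcal{L}^2(\mathbb{R}^n)}\le C$ (Lemma \ref{lem:DB}) and, from \eqref{eq:Dirichlet}, $\norm{\Delta u^r}_{\mathcal{L}^2(B_r)}\le C$. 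Interior elliptic estimates then bound $\norm{u^r}_{\mathcal{H}^2(B_R)}$ uniformly for each fixed $R$ and all sufficiently large $r$, so a diagonal extraction along an exhaustion of $\mathbb{R}^n$ by balls gives a subsequence with $u^{r_k}\rightharpoonup u$ in $\mathcal{H}^2_{loc}(\mathbb{R}^n)$, $u^{r_k}\to u$ strongly in $\mathcal{H}^1_{loc}(\mathbb{R}^n)$ and a.e. Weak lower semicontinuity gives $u\in\mathcal{H}^1(\mathbb{R}^n)$ with $\Delta u\in\mathcal{L}^2(\mathbb{R}^n)$, and since $\norm{D^2u}_{\mathcal{L}^2(\mathbb{R}^n)}=\norm{\Delta u}_{\mathcal{L}^2(\mathbb{R}^n)}$ by Plancherel we obtain $u\in\mathcal{H}^2(\mathbb{R}^n)$, while Fatou gives $u\in\mathcal{L}^1(\mathbb{R}^n)\cap\mathcal{L}^{\infty}(\mathbb{R}^n)$. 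Finally, fixing a ball and using the strong $\mathcal{H}^1_{loc}$ convergence, the finite support of the $\omega_i$, and the continuity of $B$ (Lemma \ref{lem:Bcont}), one passes to the limit in \eqref{eq:Dirichlet} to conclude $u+\lambda Bu-\epsilon\Delta u=g$ a.e. on $\mathbb{R}^n$.

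\emph{Main obstacle.} The crucial point — and the reason the short argument of \cite[Proposition 2.2]{Crandall1972} does not transfer — is that $\langle Bu,u\rangle$ need not vanish in the nonlocal setting, so the $\mathcal{H}^1$ a priori bound underpinning both the Schaefer estimate and the passage $r\to\infty$ is not automatic; the remedy is to first extract the $\mathcal{L}^1\cap\mathcal{L}^{\infty}$ (hence $\mathcal{L}^2$) bound from Lemma \ref{lem:u_Lp} and then absorb $\norm{Bu}_{\mathcal{L}^2}\le C\norm{\nabla u}_{\mathcal{L}^2}$ into the coercive term $\epsilon\norm{\nabla u}_{\mathcal{L}^2}^2$. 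A secondary technical point is upgrading the local $\mathcal{H}^2$ compactness to a genuine $\mathcal{H}^2(\mathbb{R}^n)$ solution, which is where the identity $\norm{D^2u}_{\mathcal{L}^2(\mathbb{R}^n)}=\norm{\Delta u}_{\mathcal{L}^2(\mathbb{R}^n)}$ is used.
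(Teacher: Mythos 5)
Your proof is correct, and the core of it — uniqueness from Lemma \ref{lem:regul_L1+}, existence on $B_r$ via Schaefer's theorem applied to $Tv=(I-\epsilon\Delta_D)^{-1}(g-\lambda Bv)$, with the a priori bound on $\{u=\sigma Tu\}$ obtained by first extracting the $\mathcal{L}^1\cap\mathcal{L}^{\infty}$ (hence $\mathcal{L}^2$) bound from the Dirichlet version of Lemma \ref{lem:u_Lp} and then controlling $\norm{\nabla u}_{\mathcal{L}^2}$ by absorbing $\norm{Bu}_{\mathcal{L}^2}\le C\norm{\nabla u}_{\mathcal{L}^2}$ into the coercive term — is exactly the paper's argument (you absorb via a quadratic inequality in $\norm{\nabla u}_{\mathcal{L}^2}$, the paper via Young's inequality with a small parameter $\delta$; these are interchangeable). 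Where you genuinely diverge is the passage $r\to\infty$. The paper extends $u^{r_m}$ to $\mathbb{R}^n$ and asserts that, by uniqueness, for each compact $\Omega$ the extensions $Eu^{r_m}$ eventually coincide with $Eu^{r_{m_0}}$ on $\Omega$; but the uniqueness available (Lemma \ref{lem:regul_L1+} on a fixed ball) concerns a single Dirichlet problem, and solutions on different balls solve different boundary value problems, so this stabilization claim is not actually justified as written. Your route — uniform $\mathcal{L}^1$, $\mathcal{L}^{\infty}$, $\mathcal{H}^1$ and hence $\norm{\Delta u^r}_{\mathcal{L}^2}$ bounds independent of $r$, interior elliptic estimates, a diagonal extraction giving weak $\mathcal{H}^2_{loc}$ and strong $\mathcal{H}^1_{loc}$ convergence, passage to the limit in the equation using the finite horizon of the kernels, and the Fourier identity $\norm{D^2u}_{\mathcal{L}^2(\mathbb{R}^n)}=\norm{\Delta u}_{\mathcal{L}^2(\mathbb{R}^n)}$ to upgrade to $u\in\mathcal{H}^2(\mathbb{R}^n)$ — is the standard compactness repair and is more robust than the paper's version; the only price is that you must (and do) note that $Bu$ on a fixed ball depends only on $u$ in a slightly larger ball so that the local convergence suffices for the nonlocal term.
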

\begin{proof}
	We consider the Dirichlet problem \eqref{eq:Dirichlet} first.	Define the operator $T:\mathcal{H}_0^1(B_r)\mapsto\mathcal{H}_0^2(B_r)$ by $T=-(I-\epsilon\Delta)^{-1}\lambda Bu+(I-\epsilon\Delta)^{-1}g$ and let
	\begin{equation}
		\mathcal{S}:=\qty\big{u\in\mathcal{H}_0^1(B_r):u=\eta Tu,~\eta\in[0,1]}.\label{eq:S}
	\end{equation}
	Note that $\mathcal{H}_0^2(B_r)$ can be compactly embedded into $\mathcal{H}_0^1(B_r)$, which implies that $T$ is continuous and compact and maps the Banach space $\mathcal{H}_0^1(B_r)$ into itself. Observe that $u\in\mathcal{S}$ implies in fact $u\in\mathcal{H}_0^2(B_r)$, and thus $u=\eta Tu$ is equivalent to
	\begin{equation}\label{eq:eta}
		u+\eta\lambda Bu-\epsilon\Delta u=\eta g
	\end{equation}
	on $B_r$ a.e. Multiply by $u$ and integrate over $B_r$ to find that
	\begin{equation}
		\begin{aligned}
			\norm{u}_{\mathcal{L}^2(B_r)}^2+\epsilon\norm{\nabla u}_{\mathcal{L}^2(B_r)}^2&=\eta\int_{B_r}gu\dd{x}-\eta\lambda\int_{B_r}Buu\dd{x}\\
			&\le\eta\norm{g}_{\mathcal{L}^2(B_r)}\norm{u}_{\mathcal{L}^2(B_r)}+\eta\lambda\norm{Bu}_{\mathcal{L}^2(B_r)}\norm{u}_{\mathcal{L}^2(B_r)}\\
			&\le\frac{\eta}{2}\norm{g}_{\mathcal{L}^2(B_r)}^2+\frac{\eta}{2}\norm{u}_{\mathcal{L}^2(B_r)}^2+\eta\lambda\delta^2\norm{Bu}_{\mathcal{L}^2(B_r)}^2+\frac{\eta\lambda}{\delta^2}\norm{u}_{\mathcal{L}^2(B_r)}^2\\
			&\le\frac{1}{2}\norm{g}_{\mathcal{L}^2(B_r)}^2+\frac{1}{2}\norm{u}_{\mathcal{L}^2(B_r)}^2+\lambda\delta^2\norm{Bu}_{\mathcal{L}^2(B_r)}^2+\frac{\lambda}{\delta^2}\norm{u}_{\mathcal{L}^2(B_r)}^2
		\end{aligned}
	\end{equation}
	for any $\delta>0$. Using \eqref{eq:Bu_upper} and Corollary \ref{cor:u_Lp} (note that the right-hand side is $\eta g$ in \eqref{eq:eta} and $g$ in \eqref{eq:regul}) we find that
	\begin{equation}\label{eq:Sbound1}
		\norm{u}_{\mathcal{L}^2(B_r)}^2\le\eta\norm{g}_{\mathcal{L}^1(B_r)}\norm{g}_{\mathcal{L}^{\infty}(B_r)}\le\norm{g}_{\mathcal{L}^1(B_r)}\norm{g}_{\mathcal{L}^{\infty}(B_r)}
	\end{equation}
	and that
	\begin{equation}\label{eq:Sbound2}
		\begin{aligned}
			(\epsilon-C\lambda\delta^2)\norm{\nabla u}_{\mathcal{L}^2(B_r)}^2&\le\frac{1}{2}\norm{g}_{\mathcal{L}^2(B_r)}^2+\qty\bigg(\frac{1}{2}+\frac{\lambda}{\delta^2})\norm{u}_{\mathcal{L}^2(B_r)}^2\le\qty\bigg(1+\frac{\lambda}{\delta^2})\norm{g}_{\mathcal{L}^1(B_r)}\norm{g}_{\mathcal{L}^{\infty}(B_r)}.
		\end{aligned}
	\end{equation}
	The inequalities \eqref{eq:Sbound1} and \eqref{eq:Sbound2} show that by choosing $\delta$ small enough $\mathcal{S}$ is bounded in $\mathcal{H}_0^1(B_r)$. Then Schaefer's fixed point theorem shows that $T$ has a fixed point \cite[Corollary 8.1]{Deimling1985} and, in fact, Lemma \ref{lem:regul_L1+} ensures that the fixed point is unique on $B_r$.

	Again, by \cite[Chapter 5.4]{Evans2010} and \cite{Calderon1961} there exists an $E:\mathcal{H}_0^2(B_r)\mapsto\mathcal{H}^2(\mathbb{R}^n)$ extension operator such that $\supp(Eu^r)=\supp(u^r)$ and	$\norm{Eu^r}_{\mathcal{H}^2(\mathbb{R}^n)}=\norm{u^r}_{\mathcal{H}_0^2(B_r)}$. {\color{black}Choose a sequence $\qty{r_m}\subset\mathbb{R}$ such that $r_m\rightarrow\infty$ as $m\rightarrow\infty$ and observe that by the uniqueness of $Eu^{r_m}$ on $B_{r_m}$ for each compact $\Omega\subset\mathbb{R}^n$ there exists $m_0$ such that $Eu^{r_m}=Eu^{r_{m_0}}$ a.e. on $\Omega$ for $m>m_0$. In particular, this implies that $\norm{u^{r_m}}_{\mathcal{H}_0^2(B_{r_{m_0}})}\le C=C(r_{m_0})$. Furthermore, by Lemma \ref{lem:u_Lp} we also have $\norm{Eu^{r_m}}_{\mathcal{L}^{\infty}(B_{r_m})}\le\norm{g}_{\mathcal{L}^{\infty}(B_{r_m})}$. This shows that the sequence $Eu^{r_m}$ converges to some $u\in\mathcal{L}^{\infty}(\mathbb{R}^n)\cap\mathcal{H}_{loc}^2(\mathbb{R}^n)$.} Elliptic regularity \cite[Section 6.3.1]{Evans2010} combined with inequalities \eqref{eq:Sbound1} and \eqref{eq:Sbound2} imply that

	\begin{equation}
		\begin{aligned}
			\norm{u^{r_m}}_{\mathcal{H}_0^2(B_{r_m})}&\le C\qty\big(\norm{g}_{\mathcal{L}^2(B_{r_m})}+\norm{Bu^{r_m}}_{\mathcal{L}^2(B_{r_m})})\le C\qty\big(\norm{g}_{\mathcal{L}^2(B_{r_m})}+\norm{u^{r_m}}_{\mathcal{H}_0^1(B_{r_m})})\\
			&\le C\qty\Big(\norm{g}_{\mathcal{L}^2(\mathbb{R}^n)}+\norm{g}_{\mathcal{L}^1(\mathbb{R}^n)}^{\frac{1}{2}}\norm{g}_{\mathcal{L}^{\infty}(\mathbb{R}^n)}^{\frac{1}{2}}).
		\end{aligned}
	\end{equation}
	Since the sequence $\qty\Big{\norm{u^{r_m}}_{\mathcal{H}_0^2(B_{r_m})}}$ is monotone and bounded it converges to its finite supremum; that is,
	\begin{equation}
		\lim_{r_m\rightarrow\infty}\norm{u^{r_m}}_{\mathcal{H}_0^2(B_{r_m})}=\norm{u}_{\mathcal{H}^2(\mathbb{R}^n)},
	\end{equation}
	which means that $u\in\mathcal{H}^2(\mathbb{R}^n)$ and the proof is complete.
\end{proof}

In our next result we take the limit $\epsilon\rightarrow0$. This will not only allow us to consider flux functions in $\mathcal{W}_{loc}^{1,\infty}(\mathbb{R}\times\mathbb{R})$ but will show that the various properties established for the solutions of \eqref{eq:regul} hold for the generalized solutions of \eqref{eq:very_regul}, which in turn will imply that they hold for the semigroup as well.

\begin{prop}\label{prop:Tl}
	Let $\phi_i\in\mathcal{W}_{loc}^{1,\infty}(\mathbb{R}\times\mathbb{R})$ and $A_0$ be given by Definition \ref{def:A0}. Then $\mathcal{L}^1(\mathbb{R}^n)\cap\mathcal{L}^{\infty}(\mathbb{R}^n)\subseteq R(I+\lambda A_0)$ for $\lambda>0$. Accordingly, let $T_{\lambda}:\mathcal{L}^1(\mathbb{R}^n)\cap\mathcal{L}^{\infty}(\mathbb{R}^n)\mapsto\mathcal{L}^1(\mathbb{R}^n)$ be the restriction of $(I+\lambda A_0)^{-1}$ to $\mathcal{L}^1(\mathbb{R}^n)\cap\mathcal{L}^{\infty}(\mathbb{R}^n)$. If $g_1,g_2\in\mathcal{L}^1(\mathbb{R}^n)\cap\mathcal{L}^{\infty}(\mathbb{R}^n)$, then
	\begin{enumerate}[label=\normalfont(\roman*)]
		\item $T_{\lambda}g_1\in\mathcal{L}^p(\mathbb{R}^n)$ for $p\ge1$ with $\norm{T_{\lambda}g_1}_{\mathcal{L}^p(\mathbb{R}^n)}\le\norm{g_1}_{\mathcal{L}^1(\mathbb{R}^n)}^{\frac{1}{p}}\le\norm{g_1}_{\mathcal{L}^{\infty}(\mathbb{R}^n)}^{1-\frac{1}{p}}$,
		\item $-\norm{g_1^-}_{\mathcal{L}^{\infty}(\mathbb{R}^n)}\le T_{\lambda}g_1\le\norm{g_1^+}_{\mathcal{L}^{\infty}(\mathbb{R}^n)}$,
		\item $\norm{(T_{\lambda}g_1-T_{\lambda}g_2)^+}_{\mathcal{L}^1(\mathbb{R}^n)}\le\norm{(g_1-g_2)^+}_{\mathcal{L}^1(\mathbb{R}^n)}$,
		\item $T_{\lambda}$ commutes with translations,
		\item $\int_{\mathbb{R}^n}T_{\lambda}g_1\dd{x}=\int_{\mathbb{R}^n}g_1\dd{x}$.
	\end{enumerate}
\end{prop}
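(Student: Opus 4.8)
The plan is to obtain $T_{\lambda}g$ as the vanishing-viscosity limit $\epsilon\downarrow0$ of the solutions $u_{\epsilon}\in\mathcal{H}^2(\mathbb{R}^n)$ of the regularized problem \eqref{eq:regul} furnished by Proposition~\ref{prop:regul_u_exist}, exploiting the $\epsilon$-uniform estimates already recorded in Lemma~\ref{lem:u_Lp}, Corollary~\ref{cor:u_Lp}, Remark~\ref{rem:infty} and Lemma~\ref{lem:regul_L1+}. Fix $\lambda>0$ and $g\in\mathcal{L}^1(\mathbb{R}^n)\cap\mathcal{L}^{\infty}(\mathbb{R}^n)$ and put $M=\norm{g}_{\mathcal{L}^{\infty}(\mathbb{R}^n)}$. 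By Remark~\ref{rem:infty} every solution of \eqref{eq:regul} is bounded by $M$ a.e., so only the values of the fluxes on $[-M,M]^2$ are relevant; mollifying $\phi_i$ (which preserves monotonicity and, after subtracting a constant, the normalization $\phi_i(0,0)=0$, and changes \eqref{eq:A0} only through a term that vanishes in the limit because $\int_{\mathbb{R}^n}D^{\beta_i}f\dd{x}=0$), I may and do assume $\phi_i\in\mathcal{C}^1(\mathbb{R}\times\mathbb{R})$ with bounded partial derivatives; the general case $\phi_i\in\mathcal{W}^{1,\infty}_{loc}$ is recovered afterwards by one more limit in the flux, using that all the estimates below depend only on the Lipschitz bound of $\phi_i$ on $[-M,M]^2$ and that \eqref{eq:A0} is stable under uniform convergence of the fluxes on $[-M,M]^2$.

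First I collect the $\epsilon$-uniform bounds for $u_{\epsilon}$: $\norm{u_{\epsilon}}_{\mathcal{L}^p(\mathbb{R}^n)}\le\norm{g}_{\mathcal{L}^1(\mathbb{R}^n)}^{1/p}\norm{g}_{\mathcal{L}^{\infty}(\mathbb{R}^n)}^{1-1/p}$ and $-\norm{g^-}_{\mathcal{L}^{\infty}(\mathbb{R}^n)}\le u_{\epsilon}\le\norm{g^+}_{\mathcal{L}^{\infty}(\mathbb{R}^n)}$ (Corollary~\ref{cor:u_Lp}, Remark~\ref{rem:infty}); $\int_{\mathbb{R}^n}u_{\epsilon}\dd{x}=\int_{\mathbb{R}^n}g\dd{x}$ (integrate \eqref{eq:regul} and use $\int_{\mathbb{R}^n}Bu_{\epsilon}\dd{x}=\int_{\mathbb{R}^n}\Delta u_{\epsilon}\dd{x}=0$); and, since \eqref{eq:regul} commutes with translations and its solutions are unique (Lemma~\ref{lem:regul_L1+}), $\norm{\tau_yu_{\epsilon}-u_{\epsilon}}_{\mathcal{L}^1(\mathbb{R}^n)}\le\norm{\tau_yg-g}_{\mathcal{L}^1(\mathbb{R}^n)}$ and $\norm{(u_{\epsilon}^{(1)}-u_{\epsilon}^{(2)})^+}_{\mathcal{L}^1(\mathbb{R}^n)}\le\norm{(g_1-g_2)^+}_{\mathcal{L}^1(\mathbb{R}^n)}$ for the solutions with data $g_1,g_2$ (Lemma~\ref{lem:regul_L1+}, Corollary~\ref{cor:regul_L1}). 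For $\mathcal{L}^1$-tightness I run the argument of Lemma~\ref{lem:u_Lp} with the test function $f$ replaced by $1-\kappa(\cdot/R)$: the Laplacian and zeroth-order terms are treated exactly as there, and since $|\phi_i(a,b)|\le C(|a|+|b|)$ on $[-M,M]^2$ and $|D^{\beta_i}(1-\kappa(\cdot/R))|\le C/R$, the nonlocal term is bounded by $\tfrac{C}{R}\norm{g}_{\mathcal{L}^1(\mathbb{R}^n)}$, so that $\int_{\norm{x}_{\mathbb{R}^n}\ge 2R}|u_{\epsilon}|\dd{x}\le\int_{\norm{x}_{\mathbb{R}^n}\ge R}|g|\dd{x}+\tfrac{C}{R}\norm{g}_{\mathcal{L}^1(\mathbb{R}^n)}\to0$ uniformly for $\epsilon\in(0,1]$. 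By the Kolmogorov--Riesz--Fréchet criterion $\qty{u_{\epsilon}:\epsilon\in(0,1]}$ is then relatively compact in $\mathcal{L}^1(\mathbb{R}^n)$, so along a sequence $\epsilon_m\downarrow0$ we have $u_{\epsilon_m}\to u$ in $\mathcal{L}^1(\mathbb{R}^n)$ and, after a further subsequence, a.e.; all the bounds above pass to $u$ (using Fatou for the $\mathcal{L}^p$ bound).

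Next I identify the limit. Set $w=\lambda^{-1}(g-u)\in\mathcal{L}^1(\mathbb{R}^n)\cap\mathcal{L}^{\infty}(\mathbb{R}^n)$; condition \ref{A0:phi_L1} and the $\mathcal{L}^1$-membership are immediate from $|\phi_i(u,\tau_{\beta_i}u)|\le C(|u|+|\tau_{\beta_i}u|)$. To obtain \ref{A0:entropy}, fix a nonnegative $f\in\mathcal{C}_0^{\infty}(\mathbb{R}^n)$ and $c\in\mathbb{R}$, multiply \eqref{eq:regul} (with $u=u_{\epsilon_m}$) by $\Phi_l'(u_{\epsilon_m}-c)f$ with $\Phi_l$ as in \eqref{eq:Phi}, and integrate. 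Letting $l\to\infty$ turns $\int_{\mathbb{R}^n}Bu_{\epsilon_m}\Phi_l'(u_{\epsilon_m}-c)f\dd{x}$ into $\int_{\mathbb{R}^n}Bu_{\epsilon_m}\sign_0(u_{\epsilon_m}-c)f\dd{x}$, which by integration by parts for difference quotients equals $-\int_{\mathbb{R}^n}\int_{\mathbb{R}^n}\sum_{i=1}^kD^{\beta_i}[f\sign_0(u_{\epsilon_m}-c)](\phi_i(u_{\epsilon_m},\tau_{\beta_i}u_{\epsilon_m})-\phi_i(c,c))\omega_i\dd{h}\dd{x}$, while the Laplacian term satisfies $-\epsilon_m\int_{\mathbb{R}^n}\Delta u_{\epsilon_m}\Phi_l'(u_{\epsilon_m}-c)f\dd{x}\ge-\epsilon_m\int_{\mathbb{R}^n}\Phi_l(u_{\epsilon_m}-c)\Delta f\dd{x}$ by the sign of the $\Phi_l''|\nabla u_{\epsilon_m}|^2$ term; rearranging and using $\norm{u_{\epsilon_m}}_{\mathcal{L}^1(\mathbb{R}^n)}\le\norm{g}_{\mathcal{L}^1(\mathbb{R}^n)}$ yields
\begin{equation}
\int_{\mathbb{R}^n}\sign_0(u_{\epsilon_m}-c)\,\frac{g-u_{\epsilon_m}}{\lambda}\,f\dd{x}+\int_{\mathbb{R}^n}\int_{\mathbb{R}^n}\sum_{i=1}^kD^{\beta_i}\big[f\sign_0(u_{\epsilon_m}-c)\big]\big(\phi_i(u_{\epsilon_m},\tau_{\beta_i}u_{\epsilon_m})-\phi_i(c,c)\big)\omega_i\dd{h}\dd{x}\ge-C\epsilon_m,
\end{equation}
with $C=C(f,c)$ independent of $m$. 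Letting $m\to\infty$: for every $c$ with $|\qty{x:u(x)=c}|=0$, hence for a.e.\ $c$, one has $\sign_0(u_{\epsilon_m}-c)\to\sign_0(u-c)$ and $\sign_0(\tau_{\beta_i}u_{\epsilon_m}-c)\to\sign_0(\tau_{\beta_i}u-c)$ a.e., and together with $u_{\epsilon_m}\to u$ in $\mathcal{L}^1(\mathbb{R}^n)$, the uniform $\mathcal{L}^{\infty}$-bound, the finiteness of $\supp(\omega_i)$ and the compact support of $f$ this forces every term to converge, so \eqref{eq:A0} holds for a.e.\ $c$; a standard one-sided-limit argument in $c$ (the left and right limits in $c$ of the left-hand side of \eqref{eq:A0} exist, and its value at each $c$ is their average) extends the inequality to all $c\in\mathbb{R}$. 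Hence $g\in(I+\lambda A_0)u$, so $\mathcal{L}^1(\mathbb{R}^n)\cap\mathcal{L}^{\infty}(\mathbb{R}^n)\subseteq R(I+\lambda A_0)$; accretivity (Proposition~\ref{prop:A0_accretive}) makes $I+\lambda A_0$ injective, so $T_{\lambda}$ is a well-defined single-valued map and in fact $u_{\epsilon}\to T_{\lambda}g$ in $\mathcal{L}^1(\mathbb{R}^n)$ as $\epsilon\downarrow0$ (any subsequential limit equals $T_{\lambda}g$, and the family is relatively compact). Then (i), (ii) and (v) are the $\epsilon$-uniform bounds passed to $u=T_{\lambda}g_1$; (iv) follows from the translation-equivariance of \eqref{eq:regul} and uniqueness ($\tau_yu_{\epsilon}$ is the viscous solution with datum $\tau_yg_1$, so $\tau_yT_{\lambda}g_1=T_{\lambda}\tau_yg_1$); and (iii) follows by letting $\epsilon\downarrow0$ in $\norm{(u_{\epsilon}^{(1)}-u_{\epsilon}^{(2)})^+}_{\mathcal{L}^1(\mathbb{R}^n)}\le\norm{(g_1-g_2)^+}_{\mathcal{L}^1(\mathbb{R}^n)}$.

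The step I expect to be the main obstacle is the identification of $u$ as an element of $D(A_0)$ with $g\in(I+\lambda A_0)u$: one must push the Kru\v{z}kov-type inequality through the limits $l\to\infty$ and then $\epsilon_m\downarrow0$ while controlling $\sign_0(u_{\epsilon_m}-c)$ on the level set $\qty{u=c}$, and it is exactly here that the ``sharp'' $\sign_0$-form of \eqref{eq:A0} --- rather than the $\tilde q_i$-form discussed after Definition~\ref{def:entropy} --- is the natural output of the vanishing-viscosity computation. The tightness estimate, although elementary, is the other ingredient that cannot be skipped, since without it the compactness argument yields only $\mathcal{L}^1_{loc}$-convergence of $\qty{u_{\epsilon}}$, which is insufficient to pass to the limit in $w$ and in the nonlinear fluxes.
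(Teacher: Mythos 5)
Your proposal is correct and follows the same overall strategy as the paper (elliptic regularization via Proposition~\ref{prop:regul_u_exist}, the $\epsilon$-uniform estimates of Lemma~\ref{lem:u_Lp}, Remark~\ref{rem:infty}, Lemma~\ref{lem:regul_L1+} and Corollaries~\ref{cor:u_Lp}, \ref{cor:regul_L1}, Fr\'echet--Kolmogorov compactness, identification of the limit through the Kru\v{z}kov-type inequality, and accretivity for uniqueness of the limit), but several execution choices differ in ways worth recording. First, the paper does not decouple the two regularizations: it couples the viscosity $\epsilon=1/m$ with a sequence $\phi_i^m\in\mathcal{C}^1$ converging to $\phi_i$ uniformly on compacts and passes to a single diagonal limit, whereas you mollify the fluxes first, send $\epsilon\downarrow0$ at fixed smooth flux, and only assert the outer limit in the flux; that outer limit is plausible (your estimates are uniform in the Lipschitz constant on $[-M,M]^2$) but it is the one step of your plan that is claimed rather than carried out, and the diagonal argument disposes of it in one pass. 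Second, the order of the limits $l\to\infty$ and $\epsilon\to0$ is reversed: the paper sends $m\to\infty$ at fixed $l$, so that only the continuous function $\Phi_l'$ must be passed to the limit, and only afterwards sends $l\to\infty$ using \eqref{eq:Bv_main_ineq}; your order produces $\sign_0(u_{\epsilon_m}-c)$ before the viscous limit and therefore forces the restriction to a.e.\ $c$ followed by the one-sided-limit/averaging argument in $c$ --- that argument is valid, but it is extra machinery the paper's ordering avoids. Third, you prove a global $\mathcal{L}^1$-tightness estimate to upgrade the compactness to genuine $\mathcal{L}^1(\mathbb{R}^n)$ convergence and then obtain (v) by passing to the limit in $\int u_{\epsilon}=\int g$; the paper is content with $\mathcal{L}^1_{loc}$ convergence (plus the uniform bounds) and instead derives (v) directly for the limit $u$ by testing $u+\lambda A_0u=g$ against the cutoffs $f_l$ and invoking Lemma~\ref{lem:A0_single} together with \eqref{eq:Dbif}. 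Your tightness computation is correct and buys a slightly stronger conclusion ($u_{\epsilon}\to T_{\lambda}g$ in $\mathcal{L}^1(\mathbb{R}^n)$, not merely locally), at the cost of one more estimate; conversely, the paper's route to (v) is self-contained once the limit is identified as an element of $D(A_0)$.
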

\begin{proof}
	Let $\qty{\phi_i^m}\subset\mathcal{C}^1(\mathbb{R}\times\mathbb{R})$ be a sequence such that each $\phi_i^m$ is bounded and have the property $\phi_i^m(0,0)=0$ and $\qty{\phi_i^m}$ converges to $\phi_i$ uniformly on compact sets.	Define
	\begin{equation}
		B_mu=\int_{\mathbb{R}^n}\sum_{i=1}^k\frac{\phi_i^m(u,\tau_{\beta_i}u)-\phi_i^m(\tau_{-\beta_i}u,u)}{\norm{\beta_i}_{\mathbb{R}^n}}\omega_i\dd{h}
	\end{equation}
	and $T_{\lambda,m}:\mathcal{L}^1(\mathbb{R}^n)\cap\mathcal{L}^{\infty}(\mathbb{R}^n)\mapsto\mathcal{L}^1(\mathbb{R}^n)\cap\mathcal{L}^{\infty}(\mathbb{R}^n)$ by $T_{\lambda,m}g=u$ if $u\in\mathcal{H}^2(\mathbb{R}^n)$
	and
	\begin{equation}\label{eq:ul}
		u+\lambda B_mu-\frac{1}{m}\Delta u=g.
	\end{equation}
	Proposition \ref{prop:regul_u_exist}, Lemmata \ref{lem:u_Lp} and \ref{lem:regul_L1+}, Remark \ref{rem:infty}, Corollaries \ref{cor:u_Lp} and \ref{cor:regul_L1} and the fact that $T_{\lambda,m}$ commutes with translations imply that $T_{\lambda,m}$ is well-defined and has the properties (i)-(iv). Let $g\in\mathcal{L}^1(\mathbb{R}^n)\cap\mathcal{L}^{\infty}(\mathbb{R}^n)$ and $u_m=T_{\lambda,m}g$. By Lemma \ref{lem:regul_L1+} and the translation invariance of $T_{\lambda,m}$ we conclude that
	\begin{equation}
		\int_{\mathbb{R}^n}\qty\big|u_m(x+y)-u_m(x)|\dd{x}\le\int_{\mathbb{R}^n}\qty\big|g(x+y)-g(x)|\dd{x}
	\end{equation}
	for $y\in\mathbb{R}^n$. The above estimate and $\norm{u_m}_{\mathcal{L}^1(\mathbb{R}^n)}\le\norm{g}_{\mathcal{L}^1(\mathbb{R}^n)}$, by the means of the Fréchet-Kolmogorov compactness theorem, imply that $\qty{u_m}$ is precompact in $\mathcal{L}_{loc}^1(\mathbb{R}^n)$. Thus, there is a subsequence $\qty{u_{m_j}}$ which converges a.e. in $\mathcal{L}_{loc}^1(\mathbb{R}^n)$ to a limit $u\in\mathcal{L}^1(\mathbb{R}^n)$. This convergence will be denoted as $u_{m_j}\twoheadrightarrow u$. Let $f\in\mathcal{C}_0^{\infty}(\mathbb{R}^n)$ be nonnegative and $\Phi_l$ be given by \eqref{eq:Phi}. Multiply \eqref{eq:ul} by $\Phi_l'(u_m-c)f$ and integrate over $\mathbb{R}^n$ to find that
	{\color{black}
	\begin{equation}
		\int_{\mathbb{R}^n}\qty\bigg(u_m+\lambda B_mu_m-\frac{1}{m}\Delta u_m)\Phi_l'(u_m-c)f\dd{x}=\int_{\mathbb{R}^n}g\Phi_l'(u_m-c)f\dd{x}.
	\end{equation}
	Integration by parts gives
	\begin{equation}
		\int_{\mathbb{R}^n}\qty\bigg((u_m-g)\Phi_l'(u_m-c)f+\lambda B_mu_m\Phi_l'(u_m-c)f+\frac{1}{m}\qty\big(\Phi_l''(u_m-c)|\nabla u_m|^2f-\Phi_l(u_m-c)\Delta f))\dd{x}=0.
	\end{equation}
	Note that both $\Phi_l'',f\ge0$ implies that
	\begin{equation}
		\frac{1}{m}\int_{\mathbb{R}^n}\Phi_l''(u_m-c)|\nabla u_m|^2f\dd{x}\ge0
	\end{equation}
	and $\norm{u_m}_{\mathcal{L}^{\infty}(\mathbb{R}^n)}\le\norm{g}_{\mathcal{L}^{\infty}(\mathbb{R}^n)}$ implies that the integral
	\begin{equation}
		\int_{\mathbb{R}^n}\Phi_l(u_m-c)\Delta f\dd{x}
	\end{equation}
	is bounded. Letting $m\rightarrow\infty$ through the subsequence $\qty{m_j}$ and using the convergences $u_{m_j}\twoheadrightarrow u$ and $\phi_i^m\rightarrow\phi_i$ uniformly on compact sets yields
	\begin{equation}
		\int_{\mathbb{R}^n}\qty\big((u-g)\Phi'_l(u-c)f+\lambda Bu\Phi_l(u-c)f)\dd{x}\le0.
	\end{equation}
	}
	Letting $l\rightarrow\infty$ and using \eqref{eq:Bv_main_ineq} gives
	\begin{equation}
		\int_{\mathbb{R}^n}\qty\Bigg(\sign_0(u-c)(u-g)f-\lambda\int_{\mathbb{R}^n}\sum_{i=1}^kD^{\beta_i}\qty\big[f\sign_0(u-c)]\qty\big(\phi_i(u,\tau_{\beta_i}u)-\phi_i(c,c))\omega_i\dd{h})\dd{x}\le0.
	\end{equation}
	Since $\norm{u}_{\mathcal{L}^{\infty}(\mathbb{R}^n)}\le\norm{g}_{\mathcal{L}^{\infty}(\mathbb{R}^n)}$ and $\phi_i\in\mathcal{W}_{loc}^{1,\infty}(\mathbb{R}\times\mathbb{R})$ we have $\phi_i(u,\tau_{\beta_i}u)\in\mathcal{L}^1(\mathbb{R}^n)$. Thus, we have $g\in(I+\lambda A_0)u$ by Definition \ref{def:A0} and, in fact, by Lemma \ref{lem:A0_single} the equality
	\begin{equation}
		u+\lambda A_0u=g\label{eq:eps_limit}
	\end{equation}
	holds. The accretivity of $A_0$ shows that $u$ is unique, hence $\lim_{m\rightarrow\infty}T_{\lambda,m}g=T_{\lambda}g$ holds with convergence in $\mathcal{L}_{loc}^1(\mathbb{R}^n)$. Properties (i)-(iv) are preserved under $\mathcal{L}_{loc}^1(\mathbb{R}^n)$ convergence. Choose $f\in\mathcal{C}_0^{\infty}(\mathbb{R}^n)$ nonnegative, multiply \eqref{eq:eps_limit} with $f$ and integrate over $\mathbb{R}^n$ to find that
	\begin{equation}
		\int_{\mathbb{R}^n}uf\dd{x}+\lambda\int_{\mathbb{R}^n}A_0uf\dd{x}=\int_{\mathbb{R}^n}uf\dd{x}-\lambda\int_{\mathbb{R}^n}\int_{\mathbb{R}^n}\sum_{i=1}^kD^{\beta_i}f\phi_i(u,\tau_{\beta_i}u)\omega_i\dd{h}\dd{x}=\int_{\mathbb{R}^n}gf\dd{x}
	\end{equation}
	also holds by Lemma \ref{lem:A0_single}. Let $\kappa\in\mathcal{C}_0^{\infty}(\mathbb{R})$ be nonnegative such that $\kappa(s)=1$ for $|s|\le1$. Set $f_l(\xi)=\kappa\qty\Big(\frac{\norm{\xi}_{\mathbb{R}^n}}{l})$ and let $l\rightarrow\infty$. Using \eqref{eq:Dbif} we find that the integral
	\begin{equation}
		\int_{\mathbb{R}^n}\int_{\mathbb{R}^n}\sum_{i=1}^kD^{\beta_i}f_l\phi_i(u,\tau_{\beta_i}u)\omega_i\dd{h}\dd{x}
	\end{equation}
	converges to zero as $l\rightarrow\infty$ and thus property (v) holds as well.
\end{proof}

\begin{rem}
	By Definition \ref{def:A0} it is clear that $\overline{D(A)}\subset\mathcal{L}^1(\mathbb{R}^n)$ and in some cases, in fact, the equality $\overline{D(A)}=\mathcal{L}^1(\mathbb{R}^n)$ holds, see Lemma \ref{lem:B_in_A0}. However, this remains to be shown under our general assumption that $\phi_i\in\mathcal{W}_{loc}^{1,\infty}(\mathbb{R}\times\mathbb{R})$.
\end{rem}

\begin{proof}[Proof of Theorem \ref{thm:A_generates}]
	Since $A_0$ is accretive it follows that the closure $A$ is also accretive. Let $g\in\mathcal{L}^1(\mathbb{R}^n)$ and $\qty{g_m}\subset\mathcal{L}^1(\mathbb{R}^n)\cap\mathcal{L}^{\infty}(\mathbb{R}^n)$ be such that $g_m\rightarrow g$ in $\mathcal{L}^1(\mathbb{R}^n)$. Since $T_{\lambda}$ is a contraction, the sequence $\qty{T_{\lambda}g_m}$ is Cauchy. Let $\lambda w_m=(I-T_{\lambda})g_m$, so $w_m\in A_0T_{\lambda}g_m$ and the sequence $\qty{w_m}$ is also Cauchy. If $T_{\lambda}g_m\rightarrow v$ and $w_m\rightarrow w$, then $w\in Av$ and $g=v+\lambda w\in(I+\lambda A)v$. This shows that $A$ is $m$-accretive and the proof is complete.
\end{proof}

\begin{proof}[Proof of Theorem \ref{thm:A_go_brr}]
	The solution $u_{\epsilon}(t)$ of \eqref{eq:genCP} is given by
	\begin{equation}
		u_{\epsilon}(t)=(I+\epsilon A)^{-\big\lfloor\frac{t}{\epsilon}\big\rfloor-1}u_0.
	\end{equation}
	The uniform convergence $\lim_{\epsilon\rightarrow0}u_{\epsilon}(t)=S(t)u_0$ for $t$ in $\mathcal{L}^1(\mathbb{R}^n)$ shows that properties (i)-(v) hold for $S(t)$, since by Proposition \ref{prop:Tl} they hold for $T_{\lambda}=(I+\lambda A)^{-1}$.

	For property (vi) let $u_0\in\mathcal{L}^1(\mathbb{R}^n)\cap\mathcal{L}^{\infty}(\mathbb{R}^n)$ (note that by Lemma \ref{lem:A0_single} the operator $A_0$ is single-valued in this case) and $u_{\epsilon}(x,t)$ satisfy
	\begin{equation}
		\begin{aligned}
			&\frac{1}{\epsilon}\qty\big(u_{\epsilon}(x,t)-u_{\epsilon}(x,t-\epsilon))+A_0u_{\epsilon}(x,t)=0,\qquad&&(x,t)\in\mathbb{R}^n\times(0,T);\\
			&u_{\epsilon}(x,0)=u_0(x),&&x\in\mathbb{R}^n.
		\end{aligned}
	\end{equation}
	The definition of $A_0$ implies that
	\begin{equation}
		\int_{\mathbb{R}^n}\sign_0\qty\big(u_{\epsilon}(x,t)-c)A_0u_{\epsilon}(x,t)f\dd{x}+\int_{\mathbb{R}^n}\int_{\mathbb{R}^n}\sum_{i=1}^kD^{\beta_i}\qty\big[f\sign_0(u-c)]\qty\big(\phi_i(u_{\epsilon},\tau_{\beta_i}u_{\epsilon})-\phi_i(c,c))\omega_i\dd{h}\dd{x}\ge0
	\end{equation}
	holds for any nonnegative $f\in\mathcal{C}_0^{\infty}\qty\big(\mathbb{R}^n\times(0,T))$ and any $c\in\mathbb{R}$. Notice that
	\begin{equation}
		A_0u_{\epsilon}(x,t)=\frac{1}{\epsilon}\qty\big(u_{\epsilon}(x,t-\epsilon)-u_{\epsilon}(x,t))
	\end{equation}
	and that
	\begin{equation}
		\begin{aligned}
			&\sign_0\qty\big(u_{\epsilon}(x,t)-c)\qty\big(u_{\epsilon}(x,t-\epsilon)-u_{\epsilon}(x,t))=\sign_0\qty\big(u_{\epsilon}(x,t)-c)\qty\big(u_{\epsilon}(x,t-\epsilon)-c)\\
			&+\sign_0\qty\big(u_{\epsilon}(x,t)-c)\qty\big(u_{\epsilon}(x,t)-c)\le\qty\big|u_{\epsilon}(x,t-\epsilon)-c|-\qty\big|u_{\epsilon}(x,t)-c|.
		\end{aligned}
	\end{equation}
	Using the above and integrating over $(0,T)$ yields
	{\color{black}
	\begin{equation}
		\begin{aligned}
			&\int_0^T\int_{\mathbb{R}^n}\frac{1}{\epsilon}\qty\Big(\qty\big|u_{\epsilon}(x,t-\epsilon)-c|-\qty\big|u_{\epsilon}(x,t)-c|)f(x,t)\dd{x}\dd{t}\label{eq:epsilon}\\
			&+\int_0^T\int_{\mathbb{R}^n}\int_{\mathbb{R}^n}\sum_{i=1}^kD^{\beta_i}\qty\big[f\sign_0(u-c)]\qty\big(\phi_i(u_{\epsilon},\tau_{\beta_i}u_{\epsilon})-\phi_i(c,c))\omega_i\dd{h}\dd{x}\dd{t}\ge0.
		\end{aligned}
	\end{equation}
	}
	Observe that
	\begin{equation}
		\begin{aligned}
			&\frac{1}{\epsilon}\int_0^T\int_{\mathbb{R}^n}\qty\Big(\qty\big|u_{\epsilon}(x,t-\epsilon)-c|-\qty\big|u_{\epsilon}(x,t)-c|)f(x,t)\dd{x}\dd{t}\\
			&=\frac{1}{\epsilon}\qty\Bigg(\int_0^{\epsilon}\int_{\mathbb{R}^n}\qty\big|u_{\epsilon}(x,t-\epsilon)-c|f(x,t)\dd{x}\dd{t}-\int_{T-\epsilon}^T\int_{\mathbb{R}^n}\qty\big|u_{\epsilon}(x,t)-c|f(x,t)\dd{x}\dd{t})\\
			&+\int_{\epsilon}^{T-\epsilon}\int_{\mathbb{R}^n}\qty\big|u_{\epsilon}(x,t)-c|\frac{1}{\epsilon}\qty\big(f(x,t+\epsilon)-f(x,t))\dd{x}\dd{t}.
		\end{aligned}
	\end{equation}
	Since $f\in\mathcal{C}_0^{\infty}\qty\big(\mathbb{R}^n\times(0,T))$ the first two integrals after the equal sign vanish for $\epsilon$ small enough. The uniform convergence $\lim_{\epsilon\rightarrow0}u_{\epsilon}(x,t)=S(t)u_0(x)$ in $\mathcal{L}^1(\mathbb{R}^n)$ implies that the third integral tends to
	\begin{equation}
		\int_0^T\int_{\mathbb{R}^n}\qty\big|S(t)u_0(x)-c|\pdv{f}{t}\dd{x}\dd{t};
	\end{equation}
	that is, by taking the limit $\epsilon\rightarrow0$ in $\eqref{eq:epsilon}$ the proof is complete.
\end{proof}

\bibliographystyle{abbrv}

\end{document}